\documentclass[11pt]{amsart}
\usepackage{amsfonts,amssymb,amsthm}
\usepackage{amsmath,amscd}
\usepackage{pstricks}
\usepackage{pstricks,pst-node}
\usepackage{mathrsfs}
\usepackage[all]{xy}

\DeclareMathAlphabet{\mathpzc}{OT1}{pzc}{m}{it}

\oddsidemargin 0truein \evensidemargin 0pt \topmargin 0pt
\textheight 8.5truein
\textwidth 6.3truein

\renewcommand{\subsection}[1]{\vspace{.18in}
\par\noindent\addtocounter{subsection}{1}
\setcounter{equation}{0}{\bf\thesubsection.\hspace{5pt}#1}}

\theoremstyle{definition}
\newtheorem{Rem}[subsection]{Remark}

\theoremstyle{plain}
\newtheorem{Prop}[subsection]{Proposition}
\newtheorem{Thm}[subsection]{Theorem}
\newtheorem{Not}[subsection]{Notation}
\newtheorem{Lem}[subsection]{Lemma}
\newtheorem{Coro}[subsection]{Corollary}
\numberwithin{equation}{subsection}




\newcommand{\bse}{\boldsymbol{e}}



\newcommand{\bfd}{{\mathbf{d}}}

\newcommand{\bfj}{{\mathbf{j}}}

\newcommand{\bfl}{{\mathbf{0}}}

\newcommand{\bfx}{{\mathbf{x}}}

\newcommand{\bfU}{{\mathbf{U}}}


\def\fS{{\frak S}}

\def\fka{{\frak a}}
\def\fkd{{\frak d}}





\def\sI{{\mathcal I}}
\def\sJ{{\mathcal J}}

\def\sU{{\mathcal U}}

\def\sZ{{\mathcal Z}}


\newcommand{\mbn}{\mathbb N}
\newcommand{\mbq}{\mathbb Q}
\newcommand{\mbc}{\mathbb C}
\newcommand{\mbz}{\mathbb Z}



\newcommand{\Aut}{\operatorname{Aut}}
\newcommand{\End}{\operatorname{End}}

\newcommand{\spann}{\operatorname{span}}
\newcommand{\diag}{\operatorname{diag}}

\def\ro{\text{\rm ro}}
\def\co{\text{\rm co}}


\newcommand{\la}{{\lambda}}
\newcommand{\La}{\Lambda}
\newcommand{\ga}{{\gamma}}

\newcommand{\dt}{\delta}

\newcommand{\up}{\upsilon}
\newcommand{\vi}{\varphi}

\newcommand{\al}{\alpha}
\newcommand{\bt}{\beta}
\newcommand{\sg}{\sigma}

\def\th{\theta} 



\newcommand{\lan}{\langle}
\newcommand{\ran}{\rangle}

\newcommand{\leb}{\left[}

\newcommand{\rib}{\right]}
\def\lr#1{\langle #1\rangle}

\def\bpa#1#2{\left({#1\atop #2}\right)}

\def\ggp#1#2{\left[\kern-3.2pt\left[{#1\atop #2}\right]\kern-3.2pt\right]}


\newcommand{\p}{\prec}
\newcommand{\pr}{\preccurlyeq}
\def\leq{\leqslant}\def\geq{\geqslant}
\def\le{\leqslant}\def\ge{\geqslant}

\newcommand{\bop}{\bigoplus}

\newcommand{\ot}{\otimes}

\newcommand{\han}{\subseteq}

\newcommand{\h}{\widehat}
\newcommand{\ti}{\widetilde}
\newcommand{\tu}{\widetilde u}

\newcommand{\tr}{{}^t\!}
\newcommand{\tA}{{}^t\!A}
\newcommand{\tB}{{}^t\!B}

\newcommand{\lra}{\longrightarrow}
\newcommand{\ra}{\rightarrow}
\newcommand{\map}{\mapsto}

\newcommand{\dzr}{\dot\zeta_r}

\newcommand{\zr}{\zeta_r}




\newcommand{\vtg}{{\!\vartriangle\!}}
\newcommand{\Ha}{{{\mathfrak H}_\vtg(n)}}

\newcommand{\dbfHa}{{\boldsymbol{\mathfrak D}_\vtg}(n)}
\newcommand{\dbfHap}{{\boldsymbol{\mathfrak D}^+_\vtg}(n)}
\newcommand{\dbfHam}{{\boldsymbol{\mathfrak D}^-_\vtg}(n)}
\newcommand{\dbfHaz}{{\boldsymbol{\mathfrak D}^0_\vtg}(n)}
\newcommand{\dHa}{{{\mathfrak D}_\vtg}(n)}
\newcommand{\dHap}{{{\mathfrak D}^+_\vtg}(n)}
\newcommand{\dHam}{{{\mathfrak D}^-_\vtg}(n)}
\newcommand{\dHaz}{{{\mathfrak D}^0_\vtg}(n)}

\newcommand{\ddbfHa}{\dot{\boldsymbol{\mathfrak D}_\vtg}(n)}
\newcommand{\hddbfHa}{\widehat{\dot{\boldsymbol{\mathfrak D}}}_\vtg(n)}
\newcommand{\ddHa}{\dot{{\mathfrak D}_\vtg}(n)}
\newcommand{\hddHa}{\widehat{\dot{{\mathfrak D}}}_\vtg(n)}



\newcommand{\afuglq}{\sU(\widehat{\frak{gl}}_n)}

\newcommand{\afuglqz}{\sU^0(\widehat{\frak{gl}}_n)}
\newcommand{\afuglqp}{\sU^+(\widehat{\frak{gl}}_n)}

\newcommand{\afuglqm}{\sU^-(\widehat{\frak{gl}}_n)}
\newcommand{\afuglz}{\sU_\mbz(\widehat{\frak{gl}}_n)}
\newcommand{\afuglzp}{\sU_\mbz^+(\widehat{\frak{gl}}_n)}
\newcommand{\afuglzm}{\sU_\mbz^-(\widehat{\frak{gl}}_n)}
\newcommand{\afuglzz}{\sU_\mbz^0(\widehat{\frak{gl}}_n)}

\newcommand{\afSrmbz}{{\mathcal S}_{\vtg}(n,r)_{\mathbb Z}}
\newcommand{\afSrmbq}{{\mathcal S}_{\vtg}(n,r)_{\mathbb Q}}

\newcommand{\ddHambq}{\dot{{\mathfrak D}_\vtg}(n)_\mbq}
\newcommand{\hddHambq}{\widehat{\dot{{\mathfrak D}}}_\vtg(n)_\mbq}
\newcommand{\ddHambz}{\dot{{\mathfrak D}_\vtg}(n)_\mbz}
\newcommand{\hddHambz}{\widehat{\dot{{\mathfrak D}}}_\vtg(n)_\mbz}

\newcommand{\dHapmbz}{{{\mathfrak D}^+_\vtg}(n)_\mbz}
\newcommand{\dHammbz}{{{\mathfrak D}^-_\vtg}(n)_\mbz}
\newcommand{\dHapmbq}{{{\mathfrak D}^+_\vtg}(n)_\mbq}
\newcommand{\dHammbq}{{{\mathfrak D}^-_\vtg}(n)_\mbq}
\newcommand{\dHapmmbq}{{{\mathfrak D}^\pm_\vtg}(n)_\mbq}


\newcommand{\afgl}{\widehat{\frak{gl}}_n}

\newcommand{\afal}{{\boldsymbol\alpha}^\vartriangle}

\newcommand{\afbse}{\boldsymbol e^\vartriangle}
\newcommand{\afPin}{\Pi_\vtg(n)}

\newcommand{\afE}{E^\vartriangle}

\newcommand{\Kbfj}{K^{\bfj}}
\newcommand{\su}{{}^{}}
\def\su#1{^{#1}}


\newcommand{\afbfU}{\bfU_\vtg(n)}

\newcommand{\afSr}{{\mathcal S}_{\vtg}(n,r)}

\newcommand{\afbfSr}{{\boldsymbol{\mathcal S}}_\vtg(n,r)}

\newcommand{\afmbnn}{\mathbb N_\vtg^{n}}
\newcommand{\afmbzn}{\mathbb Z_\vtg^{n}}

\newcommand{\afThn}{\Theta_\vtg(n)}

\newcommand{\afThnpm}{\Theta_\vtg^\pm(n)}
\newcommand{\afThnp}{\Theta_\vtg^+(n)}
\newcommand{\afThnm}{\Theta_\vtg^-(n)}

\newcommand{\afThnr}{\Theta_\vtg(n,r)}

\newcommand{\afMnq}{M_{\vtg,n}(\mathbb Q)}

\newcommand{\afLanr}{\Lambda_\vtg(n,r)}
\newcommand{\afLann}{\Lambda_\vtg(n,n)}

\newcommand{\tri}{\triangle(n)}
\def\field{{\mathbb F}}

\begin{document}
\title{Integral affine Schur--Weyl reciprocity}
\author{Qiang Fu}
\address{Department of Mathematics, Tongji University, Shanghai, 200092, China.}
\email{q.fu@hotmail.com}


\thanks{Supported by the National Natural Science Foundation
of China, the Program NCET, Fok Ying Tung Education Foundation
and the Fundamental Research Funds for the Central Universities}

\begin{abstract}
Let $\dbfHa$ be the double Ringel--Hall algebra of the cyclic quiver $\tri$ and let $\ddbfHa$ be the modified quantum affine algebra of $\dbfHa$. We will construct an integral form $\ddHa$ for $\ddbfHa$ such that the natural algebra homomorphism from $\ddHa$ to the integral affine quantum Schur algebra is surjective. Furthermore, we will use Hall algebras to construct the integral form $\afuglz$ of the universal enveloping algebra $\afuglq$ of the loop algebra $\afgl=\frak{gl}_n(\mbq)\ot\mbq[t,t^{-1}]$, and prove that the natural algebra homomorphism from $\afuglz$ to the affine Schur algebra over $\mbz$ is surjective.
\end{abstract}
 \sloppy \maketitle
\section{Introduction}

The representation of the general linear group $GL(n,\mbc)$
and the symmetric group $\fS_r$ over $\mbc$ are related by Schur--Weyl reciprocity (cf. \cite{Weyl}). This reciprocity is also true over $\mbz$. That is, the natural algebra homomorphisms
$$\sU_\mbz(\frak{gl}_n)\ra\End_{\mbz[\fS_r]}(V^{\ot r}),\ \mbz[\fS_r]\ra\End_{\sU_\mbz(\frak{gl}_n)}(V^{\ot r})$$
are surjective, where $\sU_\mbz(\frak{gl}_n)$ is the Kostant $\mbz$-form \cite{Ko} of the universal enveloping algebra $\sU(\frak{gl}_n)$ of $\frak{gl}_n:=\frak{gl}_n(\mbq)$, and $V$ is the natural module for $\sU_\mbz(\frak{gl}_n)$ (see \cite{CL,CP,Donkin}).
The quantum Schur--Weyl reciprocity between quantum $\frak{gl}_n$ and Hecke algebras of type $A$ in the generic case was established in \cite{Jimbo} and the integral quantum Schur--Weyl reciprocity was proved in \cite{Du95,DPS}. Furthermore, the cyclotomic Schur--Weyl reciprocity between quantum groups and Ariki--Koike algebras was investigated in \cite{ATY,SS,Ar,Hu}.

Let $\dbfHa$ be the double Ringel--Hall algebra of the cyclic quiver $\tri$ over $\mbq(\up)$,
where $\up$ is
an indeterminate. Then $\dbfHa$ is isomorphic to the quantum loop algebra $\bfU(\afgl)$ defined by Drinfeld's new presentation (cf. \cite[2.3.5]{DDF}), where $\afgl=\frak{gl}_n\ot\mbq[t,t^{-1}]$ is the loop algebra associated to $\frak{gl}_n$. In \cite[3.6.3]{DDF}, it is proved that the natural algebra homomorphism $\zeta_r$ from $\dbfHa$ to $\afbfSr$ is surjective, where
$\afbfSr$ is the affine quantum Schur algebra over
$\mbq(\up)$ (with $\up$ an indeterminate). It is natural to ask whether this result is true over any field. Before discussing this problem, we have to construct a suitable integral form for $\dbfHa$. Let $\sZ=\mbz[\up,\up^{-1}]$. A certain $\sZ$-submodule of $\dbfHa$, denoted by $\dHa$, was introduced in \cite[(3.8.1.1)]{DDF}, and it is conjectured in \cite[3.8.6]{DDF} that $\dHa$ is a $\sZ$-subalgebra of $\dbfHa$. If this conjecture is true, then $\dHa$ becomes an integral form for $\dbfHa$.

Let $\ddbfHa$ be the modified quantum affine algebra of $\dbfHa$. Associated with $\dHa$, we will construct a certain free $\sZ$-submodule of $\ddbfHa$, denoted by $\ddHa$, such that $\ddbfHa=\ddHa\ot_\sZ\mbq(\up)$. We will prove in \ref{interal form} and \ref{integral quantum affine Schur Weyl duality} that $\ddHa$ is a $\sZ$-subalgebra of $\ddbfHa$ and the natural algebra homomorphism $\dzr$ from $\ddHa$ to $\afSr$ is surjective, where $\afSr$ is the affine quantum Schur algebra over $\sZ$.

Let $\hddbfHa$ (resp., $\hddHa$) be the completion algebra of $\ddbfHa$ (resp., $\ddHa$). We will see in \ref{Phi} and \ref{Rem} that the double Ringel--Hall algebra $\dbfHa$ can be regarded as a subalgebra of $\hddbfHa$ and we have a proper inclusion $\dHa\subset\hddHa\cap\dbfHa$.
Furthermore we will prove in \ref{v=1} that this proper inclusion becomes an equality in the classical case.
More precisely, we will use Hall algebras to construct a certain lattice, denoted by $\afuglz$, of the universal enveloping algebra $\afuglq$ of $\afgl$.
Let $\hddHambq$ (resp., $\hddHambz$) be the completion algebra of $\ddHa\ot_\sZ\mbq$ (resp., $\ddHa\ot_\sZ\mbz$), where $\mbq$ and $\mbz$ are regarded as $\sZ$-modules by specializing $\up$ to $1$.
We will prove in \ref{v=1} that $\afuglz=\hddHambz\cap\afuglq$. Here
$\afuglq$ is regarded as a subalgebra of $\hddHambq$ via the map $\vi$ defined in \ref{vi}. In particular, we conclude that $\afuglz$ is a $\mbz$-subalgebra of $\afuglq$ and hence $\afuglz$ is the integral form of $\afuglq$. As the quantum affine case, there is a natural surjective algebra homomorphism $\eta_r:\afuglq\ra\afSrmbq$, where $\afSrmbq=\afSr\ot_\sZ\mbq$. We will prove in \ref{integral affine Schur Weyl duality} that
the restriction of $\eta_r$ to $\afuglz$ yields a surjective $\mbz$-algebra homomorphism $\eta_r:\afuglz\ra\afSrmbz$, where $\afSr_\mbz=\afSr\ot_\sZ\mbz$ (cf. \cite{Fu}).

We organize this paper as follows. In \S2, we will recall the definition of the double Ringel--Hall algebra $\dbfHa$ and the modified quantum affine algebra $\ddbfHa$ of $\dbfHa$. We collect in \S3 several results concerning affine quantum Schur algebras. In \S4 we will construct the $\sZ$-submodule $\dHa$ (resp., $\ddHa$) of $\dbfHa$ (resp., $\ddbfHa$) and prove in \ref{interal form} that $\ddHa$ is a $\sZ$-subalgebra of $\ddbfHa$. In addition, we will
prove in \ref{integral quantum affine Schur Weyl duality} that the natural algebra homomorphism $\dzr$ from $\ddHa$ to the integral quantum affine Schur algebra $\afSr$ is surjective and
establish certain relation between $\dHa$ and $\ddHa$ in \ref{Rem}.
In \ref{commuting formula between elements associated with indecomposable modules}, we derive certain commutator formulas in $\dbfHa$, which will be used in \S6. Finally, we will use Hall algebras to introduce the free $\mbz$-submodule $\afuglz$ of $\afuglq$, and prove in \ref{v=1} and \ref{integral affine Schur Weyl duality} that $\afuglz$ is a $\mbz$-subalgebra of $\afuglq$ such that the natural algebra homomorphism $\eta_r:\afuglz\ra\afSrmbz$ is surjective.

\begin{Not}\label{Notaion} \rm
For a positive integer $n$, let
$\afMnq$  be the set of all matrices
$A=(a_{i,j})_{i,j\in\mbz}$ with $a_{i,j}\in\mbq$ such that
\begin{itemize}
\item[(a)]$a_{i,j}=a_{i+n,j+n}$ for $i,j\in\mbz$; \item[(b)] for
every $i\in\mbz$, both sets $\{j\in\mbz\mid a_{i,j}\not=0\}$ and
$\{j\in\mbz\mid a_{j,i}\not=0\}$ are finite.
\end{itemize}
Let
$\afThn=\{A\in\afMnq\mid a_{i,j}\in\mbn,\,\forall i,j\}$.

Let $\afmbzn=\{(\la_i)_{i\in\mbz}\mid
\la_i\in\mbz,\,\la_i=\la_{i-n}\ \text{for}\ i\in\mbz\}\text{ \,\,
and \,\,} \afmbnn=\{(\la_i)_{i\in\mbz}\in \afmbzn\mid \la_i\ge0\text{ for  }i\in\mbz\}.$
We will identify $\afmbzn$ with $\mbz^n$ via the following bijection
\begin{equation}\label{flat}
\flat:\afmbzn\lra\mbz^n,\quad \bfj\longmapsto \flat(\bfj)=(j_1,\cdots,j_n).
\end{equation}

Let $\sZ=\mbz[\up,\up^{-1}]$, where $\up$ is an indeterminate, and let $\mbq(\up)$ be the fraction field of $\sZ$. Specializing $\up$ to $1$, $\mbq$ and
$\mbz$ will be viewed as $\sZ$-modules.

\end{Not}

\section{Double Ringel--Hall algebras of cyclic quivers}

Let $\tri$ ($n\geq 2$) be
the cyclic quiver
with vertex set $I=\mbz/n\mbz=\{1,2,\ldots,n\}$ and arrow set
$\{i\to i+1\mid i\in I\}$. Let $\field$ be a field. For $i\in I$, let $S_i$
be the irreducible representation of $\tri$ over $\field$ with $(S_i)_i=\field$ and $(S_i)_j=0$ for $i\neq j$.
Let $$\afThnp:=\{A\in\afThn\mid a_{i,j}=0\text{ for }i\geq j\}.$$
For any $A=(a_{i,j})\in\afThnp$, let
$$M(A)=M_\field(A)=\bop_{1\leq i\leq n\atop i<j,\,j\in\mbz}a_{i,j}M^{i,j},$$
where
$M^{i,j}$ is the unique indecomposable representation for $\tri$ of length $j-i$ with top $S_i$.
For $A\in\afThnp$ let $\bfd(A)\in\mbn I$ be the dimension vector of $M(A)$. We will identify $\mbn I$ with $\afmbnn$ under \eqref{flat}. By definition we have
\begin{equation}\label{bfd(A)}
\bfd(A)=\bigg(\sum_{s\leq i<t\atop s,t\in\mbz}a_{s,t}\bigg)_{i\in\mbz}
\end{equation}
for $A\in\afThnp$,

For $i,j\in\mbz$ let $\afE_{i,j}\in\afThn$ be the matrix
$(e^{i,j}_{k,l})_{k,l\in\mbz}$ defined by
\begin{equation*}e_{k,l}^{i,j}=
\begin{cases}1&\text{if $k=i+sn,l=j+sn$ for some $s\in\mbz$,}\\
0&\text{otherwise}.\end{cases}
\end{equation*}
For $\la\in\afmbnn$ let $$A_\la=\sum_{1\leq i\leq n}\la_i\afE_{i,i+1}\in\afThnp.$$ Then $M_{\field}(A_\la)$ is a semisimple representation of $\tri$ over $\field$.

The Euler form associated with the cyclic quiver $\tri$ is the
bilinear form $\lan-,-\ran$: $\afmbzn\times\afmbzn\ra\mbz$ defined by
$\lan\la,\mu\ran=\sum_{1\leq i\leq n}\la_i\mu_i-\sum_{1\leq i\leq n}\la_i\mu_{i+1}$
for $\la,\mu\in\afmbzn$.

By \cite{Ri93}, for $A,B,C\in\afThnp$,
there is a polynomial $\vi^{C}_{A,B}\in\mbz[\up^2]$  such
that, for any finite field $\field_q$,
$\vi^{C}_{A,B}|_{\up^2=q}$ is equal to the number of submodules $N$ of
$M_{\field_q}(C)$ satisfying $N\cong M_{\field_q}(B)$ and $M_{\field_q}(C)/N\cong M_{\field_q}(A)$.

Let $\dbfHa$ be the double Ringel--Hall algebra of the cyclic quiver of $\tri$ (cf. \cite{X97} and \cite[(2.1.3.2)]{DDF}). By \cite[2.4.1 and 2.4.4 and 3.9.2]{DDF} we obtain the following.
\begin{Lem} \label{presentation-dbfHa}
The algebra $\dbfHa$ is the algebra over $\mbq(\up)$ generated by
$u_A^+$, $K_{i}^{\pm 1}$, $u_A^-$ $(A\in\afThnp,\,i\in\mbz)$ subject to
the following relations:
\begin{itemize}
\item[(1)]
$K_i=K_{i+n}$, $K_iK_j=K_jK_i$, $K_iK_i^{-1}=1$, $u_0^+=u_0^-=1$;
\item[(2)]
$K\su{\bfj} u_A^+=\up^{\lr{\bfd(A),\bfj}}u_A^+K\su\bfj$,
$u_A^-K\su\bfj=\up^{\lr{\bfd(A),\bfj}}K\su\bfj u_A^-$, where
$K\su\bfj=K_1^{j_1}\cdots K_n^{j_n}$ for $\bfj\in\afmbzn$;
\item[(3)]
$u_A^+u_B^+=\sum_{C\in\afThnp}\up^{\lan \bfd(A),\bfd(B)\ran}\vi_{A,B}^C u_C^+$;
\item[(4)]
$u_A^-u_B^-=\sum_{C\in\afThnp}\up^{\lan \bfd(B),\bfd(A)\ran}\vi_{B,A}^C u_C^-$;
\item[(5)] {\rm commutator relations}:  for all $\la,\mu\in\afmbnn$,
\begin{equation*}
\aligned
\up^{\lan\mu,\mu\ran}&\sum_{\al,\bt\in\afmbnn\atop\la-\al=\mu-\bt\geq 0}\vi_{\la,\mu}^{\al,\bt}
\up^{\lan \bt,\la+\mu-\bt\ran}\ti K\su{\mu-\bt}u_{A_\bt}^-u_{A_\al}^+
=\up^{\lan\mu,\la\ran}\sum_{\al,\bt\in\afmbnn\atop\la-\al=\mu-\bt\geq 0}
{\vi_{\la,\mu}^{\al,\bt}}\up^{\lan \mu-\bt,\al\ran+\lan \mu,\bt\ran}
\ti K\su{\bt-\mu}u_{A_\al}^+u_{A_\bt}^-,\endaligned
\end{equation*}
\end{itemize}
where $\ti K\su\nu :=(\ti K_1)^{\nu_1}\cdots(\ti K_n)^{\nu_n}$ with $\ti K_i=K_iK_{i+1}^{-1}$ for $\nu\in\afmbzn$, and
\begin{equation*}
\vi_{\la,\mu}^{\al,\bt}=\up^{2\sum_{1\leq i\leq n}(\la_i-\al_i)(1-\al_i-\bt_i)}\prod_{1\leq i\leq n\atop 0\leq s\leq\la_i-\al_i-1}\frac{1}{\up^{2(\la_i-\al_i)}-\up^{2s}}.
\end{equation*}
\end{Lem}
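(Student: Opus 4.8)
The plan is to present $\dbfHa$ as a Drinfeld double, to read relations (1)--(4) and the torus action off the two Hopf halves, to obtain (5) by specialising the double's cross relation to semisimple modules, and finally to prove the relations complete by combining the triangular decomposition of the double with the triangularity of Hall multiplication. So I would first recall that $\dbfHa$ is built from the twisted extended Ringel--Hall algebras $\Hageq$ and $\Haleq$ of $\tri$ over $\mbq(\up)$: here $\Hageq$ is the $\mbq(\up)$-span of $\{u_A^+K^{\bfj}\mid A\in\afThnp,\ \bfj\in\afmbzn\}$, with $u_A^+u_B^+=\sum_C\up^{\langle\bfd(A),\bfd(B)\rangle}\vi^C_{A,B}u_C^+$, with the $K^{\bfj}$ grouplike, and with $K^{\bfj}u_A^+=\up^{\langle\bfd(A),\bfj\rangle}u_A^+K^{\bfj}$, and $\Haleq$ is its mirror on the $u_A^-$; these are Hopf algebras (Green, Xiao), Green's bilinear form is a skew-Hopf pairing between them, and $\dbfHa$ is the reduced Drinfeld double of this pair (cf.\ \cite{X97}). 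The construction produces at once the triangular decomposition $\dbfHa=\dbfHam\,\dbfHaz\,\dbfHap$, with $\dbfHap$ (resp.\ $\dbfHam$) the Ringel--Hall algebra on the $u_A^+$ (resp.\ $u_A^-$) and $\dbfHaz$ the torus; consequently $\{u_A^-K^{\bfj}u_B^+\mid A,B\in\afThnp,\ \bfj\in\afmbzn\}$ is a $\mbq(\up)$-basis of $\dbfHa$, and $\dbfHa$ is generated by the $u_A^\pm$ together with the $K_i^{\pm1}$.

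Next I would check that relations (1)--(5) hold in $\dbfHa$. Relations (1), including $u_0^\pm=1$, are part of the torus datum; (3) and (4) are exactly the multiplications of $\Hageq$ and $\Haleq$, with $\vi^C_{A,B}$ the Ringel--Hall polynomial recalled above, hence hold by construction; (2) records that $u_A^\pm$ is a torus weight vector of weight $\pm\bfd(A)$, that is, the $\mbn I$-grading of the Hall algebra. The relation with content is (5): the cross relation defining the Drinfeld double expresses $u_{A_\mu}^-u_{A_\la}^+$ through Green's pairing applied to the iterated comultiplications of $u_{A_\mu}^-$ and $u_{A_\la}^+$, and since $A_\la$, $A_\mu$ correspond to semisimple modules these comultiplications and the Hall numbers occurring in them factor over the vertices $i\in I$; performing this factorisation rewrites the abstract identity in the displayed form, with $\vi^{\al,\bt}_{\la,\mu}$ the resulting product of Gaussian-type terms (this is the computation of \cite[2.4.1 and 2.4.4]{DDF}).

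Finally, let $\mathfrak A$ denote the $\mbq(\up)$-algebra on generators $u_A^\pm$, $K_i^{\pm1}$ subject to (1)--(5), and $\pi\colon\mathfrak A\to\dbfHa$ the homomorphism sending generators to like-named elements; it is surjective by the previous step. It is enough to show that $\mathfrak A$ is spanned over $\mbq(\up)$ by the monomials $u_A^-K^{\bfj}u_B^+$, for then $\pi$ carries a spanning set bijectively onto the basis of $\dbfHa$ found above and is an isomorphism. By (3) (resp.\ (4)) any product of $u^+$'s (resp.\ $u^-$'s) collapses into a $\mbq(\up)$-combination of single $u^+_C$'s (resp.\ $u^-_C$'s), and by (1), (2) every torus factor can be moved to the middle, so the only point left is to rewrite a product $u^+_Bu^-_A$ in the normal form $\sum u^-_CK^{\bfj}u^+_D$. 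I would do this by induction on $|A|+|B|$, where $|A|$ is the number of composition factors of $M(A)$, with a secondary induction along the degeneration order on $\afThnp$ within each fixed dimension vector. In the inductive step, whenever $M(B)$ is non-simple one uses the triangularity of Hall multiplication to write $u^+_B=\gamma^{-1}u^+_{B_1}u^+_{B_2}$ minus a $\mbq(\up)$-combination of $u^+_D$'s with $\bfd(D)=\bfd(B)$ and $D$ on the side of $B$ prescribed by that order, where $M(B_1)$, $M(B_2)$ come from a two-term filtration of $M(B)$ (so $|B_1|,|B_2|<|B|$) and $\gamma$ is a nonzero Hall polynomial, hence invertible in $\mbq(\up)$; the symmetric reduction is applied to $u^-_A$. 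Substituting these and pushing the outermost factor inward one step at a time, invoking the inductive hypothesis on strictly smaller data at each stage, one rewrites $u^+_Bu^-_A$ in terms of products $u^+_{B'}u^-_{A'}$ with $M(B')$ and $M(A')$ simple, and relation (5) straightens these directly.

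The delicate part is exactly this last step. Relation (5) is supplied only for the semisimple generators, while the straightening has to be carried out for all $u^+_Bu^-_A$, and one cannot shortcut it by rewriting $u^+_B$ through the semisimple generators --- the subalgebra generated by $\{u^\pm_{A_\la}\mid\la\in\afmbnn\}$ is a proper subalgebra of $\dbfHa$. What makes the reduction succeed is the fine triangular (generic-extension / degeneration) structure of the Ringel--Hall algebra of the cyclic quiver: the two-term filtrations and the ordering must be chosen so that the two levels of induction interlock, and one must verify that no zero denominators appear along the way. This is the heart of the matter and is what \cite[3.9.2]{DDF} supplies; alternatively, one may take the triangular decomposition of $\dbfHa$ for granted from the Drinfeld-double construction and then prove only the spanning statement for $\mathfrak A$, but the inductive straightening over the infinite index set $\afThnp$ is the technical core in either approach.
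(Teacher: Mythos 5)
Your proposal is correct and takes the same route as the paper: the paper's entire proof of this lemma is the citation to \cite[2.4.1, 2.4.4 and 3.9.2]{DDF}, and your outline (Drinfeld double structure giving (1)--(4) and the triangular decomposition, the cross relation specialised to semisimple modules giving (5), and an inductive straightening for completeness) is precisely an unpacking of what those three references supply. You correctly identify that the hard step is the straightening argument and defer it to \cite[3.9.2]{DDF}, which matches the level of detail the paper itself offers; the only cosmetic deviation is your choice of $u^-K^{\bfj}u^+$ as the normal form where the paper's \eqref{tri deco of dbfHa} records $\dbfHap\ot\dbfHaz\ot\dbfHam$.
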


Let $\afbfU$ be the subalgebra of $\dbfHa$ generated by $u^+_{\afE_{i,i+1}}$, $u^-_{\afE_{i+1,i}}$ and $K_i^{\pm 1}$ for $1\leq i\leq n$. The algebra $\afbfU$ is a proper subalgebra of $\dbfHa$ and it is the quantum affine algebra considered in \cite[7.7]{Lu99}.

Let $\dbfHap=\spann_{\mbq(\up)}\{u_A^+\mid A\in\afThnp\}$, $\dbfHam=\spann_{\mbq(\up)}\{u_A^-\mid A\in\afThnp\}$, and
$\dbfHaz=\spann_{\mbq(\up)}\{\Kbfj\mid \bfj\in\afmbzn\}$. Then we have
\begin{equation}\label{tri deco of dbfHa}
\dbfHa\cong\dbfHap\ot\dbfHaz\ot\dbfHam.
\end{equation}

For $i\in\mbz$ let
$\afbse_i\in\afmbnn$ be the element
satisfying $\flat(\afbse_i)=\bse_i=(0,\cdots,0,\underset
{(i)}1,0,\cdots,0)$, where $\flat$ is defined in \eqref{flat}.
Let $\afPin=\{\afal_j:=\afbse_j-\afbse_{j+1}\mid 1\leq j\leq n\}$. By \ref{presentation-dbfHa} the algebra $\dbfHa$ is a $\afmbzn$-graded algebra with
$$\deg(u_A^+)=\sum\limits_{1\leq i\leq n}d_i\afal_i,\
\deg(u_A^-)=-
\sum_{1\leq i\leq n}d_i\afal_i\ \text{and}\ \deg(K_i^{\pm 1})=0$$
for $A\in\afThnp$ and $1\leq i\leq n$, where $(d_i)_{i\in\mbz}=\bfd(A)$.
For $\nu\in\afmbzn$ let $\dbfHa_\nu$ be the set of homogeneous elements in $\dbfHa$ of degree $\nu$. Then we have
$$\dbfHa=\bop_{\nu\in\mbz\afPin}\dbfHa_\nu.$$
\begin{Lem}\label{commuting formula K^bfj t}
For $\la,\bfj\in\afmbzn$ and $t\in\dbfHa_\la$ we have
$\Kbfj t=\up^{\bfj\cdot\la}t \Kbfj$,
where $\la\cdot\bfj=\sum_{1\leq j\leq n}\la_ij_i$.
\end{Lem}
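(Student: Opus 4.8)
The plan is to reduce the claim to the algebra generators of $\dbfHa$ listed in \ref{presentation-dbfHa} and then exploit multiplicativity. First I would observe that if homogeneous elements $t\in\dbfHa_\la$ and $t'\in\dbfHa_\mu$ both satisfy the asserted commutation rule with $\Kbfj$, then so does the homogeneous element $tt'\in\dbfHa_{\la+\mu}$, because $\Kbfj tt'=\up^{\bfj\cdot\la}t\Kbfj t'=\up^{\bfj\cdot(\la+\mu)}tt'\Kbfj$. Each generator is homogeneous --- $u_A^+$ of degree $\sum_{1\le i\le n}d_i\afal_i$ and $u_A^-$ of degree $-\sum_{1\le i\le n}d_i\afal_i$ with $(d_i)_{i\in\mbz}=\bfd(A)$, and $K_i^{\pm 1}$ of degree $0$ --- so $\dbfHa_\la$ is spanned over $\mbq(\up)$ by the monomials in the generators of total degree $\la$. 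By induction on the length of such a monomial it is therefore enough to verify the identity when $t$ is a single generator.

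For $t=K_i^{\pm1}$ the identity is just the commutativity of the $K$'s in \ref{presentation-dbfHa}(1). For $t=u_A^+$ with $\bfd(A)=(d_i)_{i\in\mbz}$, relation \ref{presentation-dbfHa}(2) gives $\Kbfj u_A^+=\up^{\lan\bfd(A),\bfj\ran}u_A^+\Kbfj$, so all that remains is to match the exponent with $\bfj\cdot\la$ for $\la=\flat(\deg u_A^+)$. From $\deg u_A^+=\sum_{1\le i\le n}d_i(\afbse_i-\afbse_{i+1})$ the $k$-th coordinate of $\la$ equals $d_k-d_{k-1}$, while, using the $n$-periodicity of $\bfj$ and $\bfd(A)$ to reindex, $\lan\bfd(A),\bfj\ran=\sum_{1\le i\le n}d_ij_i-\sum_{1\le i\le n}d_ij_{i+1}=\sum_{1\le i\le n}(d_i-d_{i-1})j_i=\bfj\cdot\la$. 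The case $t=u_A^-$ is identical in form: \ref{presentation-dbfHa}(2) rearranges to $\Kbfj u_A^-=\up^{-\lan\bfd(A),\bfj\ran}u_A^-\Kbfj$, and since $\deg u_A^-=-\deg u_A^+$ the same computation gives $-\lan\bfd(A),\bfj\ran=\bfj\cdot\flat(\deg u_A^-)$.

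I do not anticipate any genuine difficulty; the only point that wants care is the last displayed chain --- the identification of the Euler form $\lan-,-\ran$ with the pairing $\bfj\cdot\la$ on $\Image\flat$ --- which, after the periodicity reindexing $\sum_{1\le i\le n}d_ij_{i+1}=\sum_{1\le i\le n}d_{i-1}j_i$, is elementary. If one prefers to bypass the reduction to generators, one may instead work with a PBW-type basis $\{u_A^+K^{\bfh}u_B^-\}$ of $\dbfHa$ coming from \eqref{tri deco of dbfHa}: such a basis vector lies in $\dbfHa_\la$ with $\la=\flat(\deg u_A^++\deg u_B^-)$, and moving $\Kbfj$ past $u_A^+$, then (via $K^{\bfh}\Kbfj=\Kbfj K^{\bfh}$) past $u_B^-$ using \ref{presentation-dbfHa}(2) produces precisely the scalar $\up^{\lan\bfd(A),\bfj\ran-\lan\bfd(B),\bfj\ran}=\up^{\bfj\cdot\la}$; linearity then finishes the proof.
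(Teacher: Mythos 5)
Your proof is correct and follows essentially the same route as the paper: both reduce the statement to the verification on generators via relation \ref{presentation-dbfHa}(2) and the identity $\lan\bfd(A),\bfj\ran=(\deg u_A^\pm)\cdot\bfj$, which you establish by the elementary periodicity reindexing. The paper simply states this more tersely, leaving the multiplicativity/reduction-to-generators step implicit.
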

\begin{proof}
Clearly for $A\in\afThnp$ we have $\lan\bfd(A),\bfj\ran=(\deg u_A^+)\cdot\bfj$ and  $-\lan\bfd(A),\bfj\ran=(\deg u_A^-)\cdot\bfj$. Combining this with \ref{presentation-dbfHa}(2) proves the assertion.
\end{proof}

Following \cite{Lubk} we now introduce the modified quantum affine algebra $\ddbfHa$ of $\dbfHa$.
For $\la,\mu\in\afmbzn$ we set
$${}_\la\dbfHa_\mu=\dbfHa\bigg/\bigg(\sum_{\bfj\in\afmbzn}(\Kbfj-
 \up^{\la\cdot\bfj})\dbfHa+\sum_{\bfj\in\afmbzn}\dbfHa(\Kbfj
 -\up^{\mu\cdot\bfj})\bigg).$$
Let $\pi_{\la,\mu}:\dbfHa\ra{}_\la\dbfHa_\mu$ be the canonical projection. Let  $$\ddbfHa:=\bop\limits_{\la,\mu\in\afmbzn}{}_\la\dbfHa_\mu.$$
Since ${}_\la\dbfHa_\mu=\bop_{\nu\in\mbz\afPin}\pi_{\la,\mu}(\dbfHa_\nu)$, we have $\ddbfHa=\bop_{\nu\in\mbz\afPin}\ddbfHa_\nu$, where $\ddbfHa_\nu=\bop_{\la,\mu\in\afmbzn}\pi_{\la,\mu}(\dbfHa_\nu)$.

\begin{Lem}\label{Lem for def of ddbfHa}
Assume $\la,\mu\in\afmbzn$, $\nu\in\mbz\afPin$ and $\nu\not=\la-\mu$. Then we have $\pi_{\la,\mu}(\dbfHa_\nu)=0$.
\end{Lem}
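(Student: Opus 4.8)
The plan is to use the two defining relations built into ${}_\la\dbfHa_\mu$ together with the commutation formula of Lemma \ref{commuting formula K^bfj t}. Fix a homogeneous element $t\in\dbfHa_\nu$ and write $\bar t=\pi_{\la,\mu}(t)$. By the very definition of the quotient ${}_\la\dbfHa_\mu$, the subspace $\sum_{\bfj}(\Kbfj-\up^{\la\cdot\bfj})\dbfHa$ maps to zero and the subspace $\sum_{\bfj}\dbfHa(\Kbfj-\up^{\mu\cdot\bfj})$ maps to zero, so that
\[
\pi_{\la,\mu}(\Kbfj x)=\up^{\la\cdot\bfj}\pi_{\la,\mu}(x)\qquad\text{and}\qquad\pi_{\la,\mu}(x\Kbfj)=\up^{\mu\cdot\bfj}\pi_{\la,\mu}(x)
\]
for all $x\in\dbfHa$ and all $\bfj\in\afmbzn$, where $\pi_{\la,\mu}$ is $\mbq(\up)$-linear.

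Next I would evaluate $\pi_{\la,\mu}(\Kbfj t)$ in two ways. On the one hand, the left relation above gives $\pi_{\la,\mu}(\Kbfj t)=\up^{\la\cdot\bfj}\bar t$. On the other hand, since $t$ is homogeneous of degree $\nu$, Lemma \ref{commuting formula K^bfj t} gives $\Kbfj t=\up^{\bfj\cdot\nu}\,t\Kbfj$, whence, using $\mbq(\up)$-linearity and then the right relation,
\[
\pi_{\la,\mu}(\Kbfj t)=\up^{\bfj\cdot\nu}\pi_{\la,\mu}(t\Kbfj)=\up^{\bfj\cdot\nu}\up^{\mu\cdot\bfj}\bar t=\up^{(\nu+\mu)\cdot\bfj}\bar t.
\]
Comparing the two expressions yields $\bigl(\up^{\la\cdot\bfj}-\up^{(\nu+\mu)\cdot\bfj}\bigr)\bar t=0$ in ${}_\la\dbfHa_\mu$ for every $\bfj\in\afmbzn$.

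To conclude, I use the hypothesis $\nu\neq\la-\mu$, i.e.\ $\la-\mu-\nu$ is a nonzero element of $\afmbzn$; identifying $\afmbzn$ with $\mbz^n$ via $\flat$, some coordinate of $\la-\mu-\nu$ is nonzero, so we may choose $\bfj\in\afmbzn$ (a suitable standard basis vector will do) with $(\la-\mu-\nu)\cdot\bfj\neq 0$, that is, $\la\cdot\bfj\neq(\nu+\mu)\cdot\bfj$. Since $\up$ is an indeterminate, $\up^{\la\cdot\bfj}-\up^{(\nu+\mu)\cdot\bfj}$ is then a nonzero scalar in $\mbq(\up)$; as ${}_\la\dbfHa_\mu$ is a $\mbq(\up)$-vector space, this forces $\bar t=0$. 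Because $t\in\dbfHa_\nu$ was arbitrary, $\pi_{\la,\mu}(\dbfHa_\nu)=0$.

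I do not expect any genuine obstacle here: the argument is nothing more than two-sided weight bookkeeping, with Lemma \ref{commuting formula K^bfj t} supplying the only nontrivial input. The single point deserving a word of care is the last step — that annihilation by a difference of two distinct powers of $\up$ forces vanishing — which is immediate because that difference is a unit in the ground field $\mbq(\up)$.
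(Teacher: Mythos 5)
Your argument is correct and is essentially the paper's own proof: both compute $\pi_{\la,\mu}(\Kbfj t)$ in two ways using Lemma \ref{commuting formula K^bfj t} and the two defining ideals of the quotient, obtain $(\up^{\la\cdot\bfj}-\up^{(\mu+\nu)\cdot\bfj})\pi_{\la,\mu}(t)=0$, and then pick a coordinate where $\la-\mu-\nu$ is nonzero. The only cosmetic difference is that the paper works directly with a single generator $K_{i_0}$ rather than a general $\Kbfj$ that is then specialized to a basis vector.
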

\begin{proof}
Let $t\in\dbfHa_\nu$. By \ref{commuting formula K^bfj t} we see that
$(\up^{\la_i}-\up^{\mu_i+\nu_i})\pi_{\la,\mu}(t)
=\pi_{\la,\mu}(K_i
t)-\pi_{\la,\mu}
(\up^{\nu_i}tK_i)=0$
for $1\leq i\leq n$. Since $\nu\not=\la-\mu$ and $\up$ is an indeterminate, there exist
$1\leq i_0\leq n$ such that $\up^{\la_{i_0}}\not=\up^{\mu_{i_0}+\nu_{i_0}}$. Consequently,  $\pi_{\la,\mu}(t)=0$.
\end{proof}

We define the product in $\ddbfHa$ as follows.
For $\la',\mu',\la'',\mu''\in\afmbzn$ with
$\la'-\mu',\la''-\mu''\in\mbz\afPin$ and any $t\in\dbfHa_{\la'-\mu'}$,
$s\in\dbfHa_{\la''-\mu''}$,  define
$$\pi_{\la',\mu'}(t)\pi_{\la'',\mu''}(s)=\begin{cases}\pi_{\la',\mu''}(ts),
& \text{if } \mu'=\la''\\
0& \text{otherwise}.
\end{cases}$$
Then by \ref{Lem for def of ddbfHa} one can check that
$\ddbfHa$ becomes an associative $\mbq(\up)$-algebra structure with the above product.

The algebra $\ddbfHa$ is naturally a $\dbfHa$-bimodule defined by
\begin{equation}\label{bimodule}
t'\pi_{\la',\la''}(s)t''=\pi_{\la'+\nu',\la''-\nu''}(t'st'') \end{equation}
for $t'\in\dbfHa_{\nu'}$, $s\in\dbfHa$, $t''\in\dbfHa_{\nu''}$ and $\la',\la''\in\afmbzn$.

\section{Affine quantum Schur algebras}

For $r\geq 0$ let $\afbfSr$\footnote{The
algebra $\afbfSr$ is denoted by  $\frak U_{r,n,n}$ in \cite[1.9]{Lu99}.} be the affine quantum Schur algebra over $\mbq(\up)$ defined in \cite[1.9]{Lu99}.
Recall the set $\afThn$ defined in \ref{Notaion}.
The algebra $\afbfSr$ has a normalized  $\mbq(\up)$-basis $\{[A]\mid A\in\afThnr\}$ (cf. \cite[1.9]{Lu99}),
where $$
\afThnr=\{A\in\afThn\mid\sg(A):=\sum_{1\leq i\leq n\atop
j\in\mbz}a_{i,j}=r\}.$$
Let $\sZ=\mbz[\up,\up^{-1}]$, where $\up$ is an indeterminate. Let $\afSr$ be the $\sZ$-submodule of $\afbfSr$ spanned by $\{[A]\mid A\in\afThnr\}$. Then $\afSr$ is the $\sZ$-subalgebra of $\afbfSr$.

For $r\geq 0$, let
$\afLanr=\{\la\in\afmbnn\mid\sg(\la):=\sum_{1\leq i\leq n}\la_i=r\}.$
For $\la\in\afLanr$ and $A\in\afThnr$, we have
\begin{equation}\label{[diag(la)][A]}
\begin{aligned}
\ [\diag(\la)]\cdot[A]=
\begin{cases}[A], & \text{if}\ \lambda=\ro(A);\\
0, & \text{otherwise;}
\end{cases}
\end{aligned} \ \text{and}\
\begin{aligned}
\ [A][\diag(\la)]=
\begin{cases}[A], & \text{if}\ \lambda=\co(A);\\
0, & \text{otherwise,}
\end{cases}
\end{aligned}
\end{equation}
where
$\ro(A)=\bigl(\sum_{j\in\mbz}a_{i,j}\bigr)_{i\in\mbz}$ and $\co(A)=\bigl(\sum_{i\in\mbz}a_{i,j}\bigr)_{j\in\mbz}$ (see \cite[1.9]{Lu99}). In particular, we have
\begin{equation}\label{[diag(la)][diag(mu)]}
[\diag(\la)][\diag(\mu)]=\dt_{\la,\mu}[\diag(\la)]
\end{equation}
for $\la,\mu\in\afLanr$.

We now recall certain triangular relation in $\afbfSr$, which will be needed in \S3. First we need the following order relation $\pr$
on $\afThn$.
For $A\in\afThn$ and $i\not=j\in\mbz$, let
$$\sg_{i,j}(A)=\sum\limits_{s\leq i,t\geq j}a_{s,t}\text{ if $i<j$,}\text{ and }
\sg_{i,j}(A)=\sum\limits_{s\geq i,t\leq j}a_{s,t}  \text{ if
$i>j$}.$$ For $A,B\in\afThn$, define
$B\pr A$ if $\sg_{i,j}(B)\leq\sg_{i,j}(A)$ for all $i\not=j$.
Put $B\p A$ if $B\pr A$ and, for some pair $(i,j)$ with $i\not=j$,
$\sg_{i,j}(B)<\sg_{i,j}(A)$.

Let $\afThnpm:=\{A\in\afThn\mid a_{i,j}=0\text{ for }i= j\}$.
For $A\in\afThnpm$ and $\bfj\in\afmbzn$, define $A(\bfj,r)\in
\afbfSr$ by
\begin{equation*}
A(\bfj,r)=\begin{cases}
\sum_{\la\in\La_\vtg(n,r-\sg(A))}\up^{\la\cdot\bfj}[A+\diag(\la)],&\text{ if }\sg(A)\leq r;\\
0,&\text{ otherwise.}\end{cases}
\end{equation*}
For $A\in\afThnpm$, write $A=A^++A^-$ with $A^+\in\afThnp$,
$A^-\in\afThnm$, where $$\afThnm:=\{A\in\afThn\mid a_{i,j}=0\text{ for }i\leq j\}.$$ The following triangular relation in $\afbfSr$ is given in \cite[3.7.3]{DDF}.

\begin{Lem}\label{tri}
Let $C\in\afThnpm$. Then the following triangular relation holds in $\afbfSr$:
$$
C^+(\bfl,r)C^-(\bfl,r)=C(\bfl,r)+\sum_{X\in\afThnpm\atop X\p C,\,\bfj\in\afmbzn}h_{C,X,\bfj;r}X(\bfj,r),
$$
where $h_{C,X,\bfj;r}\in\mbq(\up)$.
\end{Lem}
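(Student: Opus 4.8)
The plan is to establish the triangular relation by expanding the product $C^+(\bfl,r)C^-(\bfl,r)$ directly in terms of the basis $\{[A]\mid A\in\afThnr\}$ and then regrouping the terms according to the partial order $\p$. First I would recall from the definitions that $C^+(\bfl,r)=\sum_{\la}[C^++\diag(\la)]$ (sum over $\la\in\afLanr$ with $\la\geq\ro(C^+)$, entrywise, forcing $\sg(C^+)\le r$) and similarly $C^-(\bfl,r)=\sum_{\mu}[C^-+\diag(\mu)]$, since the exponent $\up^{\la\cdot\bfl}$ is $1$ when $\bfj=\bfl$. By the property \eqref{[diag(la)][A]}, the product $[C^++\diag(\la)][C^-+\diag(\mu)]$ is nonzero only when $\co(C^++\diag(\la))=\ro(C^-+\diag(\mu))$, which pins down $\mu$ in terms of $\la$; so the product collapses to a single sum over $\la$, namely $\sum_{\la}[C^++\diag(\la)]\cdot[C^-+\diag(\mu(\la))]$.

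Next I would invoke the multiplication formula for products of the form $[B][B']$ with $B$ upper triangular and $B'$ lower triangular in the affine quantum Schur algebra (the affine analogue of the classical result expressing such a product as $[B+B']$ plus strictly lower terms with respect to $\p$, together with diagonal corrections). This is exactly the content available from \cite[Chapter 3]{DDF}: the leading term of $[C^++\diag(\la)][C^-+\diag(\mu(\la))]$ is $[C+\diag(\nu)]$ for the appropriate $\nu$ (with $C=C^++C^-$), and all other terms involve matrices $Y\in\afThnr$ whose off-diagonal part $Y^{\pm}$ satisfies $Y^{\pm}\p C$, with coefficients in $\mbq(\up)$. Summing over $\la$, the leading terms assemble into $\sum_{\nu}[C+\diag(\nu)]$ — which is precisely $C(\bfl,r)$ — while the remaining terms, grouped by their off-diagonal shape $X$ and collecting the diagonal-weight dependence into functions $\up^{\bfj\cdot(-)}$, assemble into a sum $\sum_{X\p C,\bfj}h_{C,X,\bfj;r}X(\bfj,r)$. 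The regrouping into the $X(\bfj,r)$ form uses that the coefficients depend on the diagonal part only through a character, which follows from the $\up$-weight homogeneity of the structure constants; this is the same mechanism by which such formulas are packaged throughout \cite{DDF}.

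The main obstacle I anticipate is the bookkeeping in the last regrouping step: one must verify that after fixing the off-diagonal part $X\p C$, the collection of coefficients attached to the various $[X+\diag(\la')]$ can genuinely be written as a $\mbq(\up)$-linear combination of the vectors $(\up^{\la'\cdot\bfj})_{\la'}$, i.e. as $\sum_{\bfj}h_{C,X,\bfj;r}\up^{\la'\cdot\bfj}$ with finitely many $\bfj$. This is essentially a statement that a function on $\afLanr$ arising from the Hall-algebra structure constants lies in the span of the exponential functions $\la'\mapsto\up^{\la'\cdot\bfj}$; since $\afLanr$ is finite, any function on it is such a combination, so the only real point is finiteness and $\sg(X)\le r$, both of which are automatic. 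Granting the affine upper-lower multiplication formula from \cite{DDF} (which itself is the genuinely hard input, proved there via the geometric or Hall-algebraic realization), the remainder of the argument is the organizational passage from the $[A]$-basis expansion to the $X(\bfj,r)$-form, and the claimed triangularity with respect to $\p$ is inherited term by term.
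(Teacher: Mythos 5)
The paper does not prove Lemma~\ref{tri}: it is a bare citation of \cite[3.7.3]{DDF}, so there is no proof in this document against which to compare yours. Judged on its own, your plan is the correct shape of argument: expand both factors over diagonal shifts; collapse the double sum using the fact that $[A][B]\neq0$ forces $\co(A)=\ro(B)$; invoke a multiplication/triangularity result for a product of ``upper'' by ``lower''; regroup the residuals by off-diagonal shape $X$, writing the function $\la\mapsto c_{X,\la}$ on the finite set $\La_\vtg(n,r-\sg(X))$ as a linear combination of the characters $\la\mapsto\up^{\la\cdot\bfj}$. That last step is legitimate and finite for exactly the Vandermonde-type reason you gesture at; compare $\det(\up^{\mu\cdot\bfj})_{\mu,\bfj\in\mbz^n_{[a,b]}}\neq 0$ as used in the proof of Proposition~\ref{Phi}.

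Two caveats. First, a slip in the definition: $C^+(\bfl,r)=\sum_{\la\in\La_\vtg(n,r-\sg(C^+))}[C^++\diag(\la)]$, where $\la$ runs over compositions of $r-\sg(C^+)$ with no further constraint; your ``$\la\in\afLanr$ with $\la\geq\ro(C^+)$'' is a different (and here incorrect) parametrization --- the inequality $\la_i\geq\sg_i(A)$ belongs only to the basis description in Corollary~\ref{integral basis for affine Schur algebras}, not to the definition of $A(\bfj,r)$. Second, and more substantively, all of the mathematical content of the lemma is concealed inside your phrase ``the affine upper--lower multiplication formula.'' The assertion that $[C^++\diag(\la)][C^-+\diag(\mu(\la))]$ contributes $[C+\diag(\nu)]$ with coefficient exactly $1$, and that every other matrix $Y$ appearing satisfies $\sg_{i,j}(Y)\leq\sg_{i,j}(C)$ for all $i\neq j$ with strict inequality somewhere, is precisely what needs proving; it is not a bookkeeping or ``organizational'' step. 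It requires an analysis of the BLM-style multiplication rule in the affine quantum Schur algebra and a verification, term by term, that the entry redistributions it produces weakly decrease every $\sg_{i,j}$ and that the identity redistribution (giving $C$) carries coefficient $1$. Deferring this to \cite{DDF} is consistent with how the present paper treats the lemma, but your write-up should make explicit that this deferred input --- not the final regrouping --- carries the full weight of the statement.
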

Using \ref{tri} one can construct a  $\sZ$-basis for $\afSr$ as follows.
\begin{Coro}[{\cite[3.7.7]{DDF}}]\label{integral basis for affine Schur algebras}
The set $\{A^+(\bfl,r)[\diag(\la)]A^-(\bfl,r)\mid A\in\afThnpm,\,\la\in\afLanr,\,\la_i\geq\sg_i(A),\,\text{for}\,1\leq i\leq n\}$ forms a $\sZ$-basis for $\afSr$, where $\sg_i(A)=\sum_{j<i}(a_{i,j}+a_{j,i})$.
\end{Coro}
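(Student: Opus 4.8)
The plan is to realise the proposed set as the image of the standard $\sZ$-basis $\{[B]\mid B\in\afThnr\}$ of $\afSr$ (which is a $\sZ$-basis by definition, the $[B]$ being a $\mbq(\up)$-basis of $\afbfSr$) under a change of basis that is unitriangular over $\sZ$ for the order $\p$. The first ingredient is the indexing. Given $A\in\afThnpm$ and $\la\in\afLanr$ with $\la_i\geq\sg_i(A)$ for all $i$, set $B_{A,\la}:=A+\diag\bigl((\la_i-\sg_i(A))_{i\in\mbz}\bigr)$. Writing $A=A^++A^-$ with $A^+\in\afThnp$, $A^-\in\afThnm$, the identity $\sum_{1\leq i\leq n}\sg_i(A)=\sg(A^+)+\sg(A^-)=\sg(A)$ is immediate from the definition of $\sg_i$, so $B_{A,\la}$ has non-negative entries and $\sg(B_{A,\la})=\sg(A)+\bigl(\sg(\la)-\sg(A)\bigr)=r$; hence $B_{A,\la}\in\afThnr$, and $(A,\la)\mapsto B_{A,\la}$ is a bijection onto $\afThnr$, the inverse sending $B$ to its off-diagonal part together with $\la_i=b_{i,i}+\sg_i(B)$. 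Note that $\la_i\geq\sg_i(A)$ is precisely the non-negativity of the diagonal of $B_{A,\la}$.

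The main step is the triangular expansion: for $(A,\la)$ in the index set,
$$A^+(\bfl,r)[\diag(\la)]A^-(\bfl,r)=[B_{A,\la}]+\sum_{B'\in\afThnr,\ B'\p B_{A,\la}}g_{B'}[B'],\qquad g_{B'}\in\sZ.$$
By \eqref{[diag(la)][A]}, $A^+(\bfl,r)[\diag(\la)]=[E_\la]$ and $[\diag(\la)]A^-(\bfl,r)=[D_\la]$, where $E_\la=A^++\diag(\la-\co(A^+))$ and $D_\la=A^-+\diag(\la-\ro(A^-))$ have non-negative diagonal because $\sg_i(A)=\co_i(A^+)+\ro_i(A^-)$; consequently $A^+(\bfl,r)[\diag(\la)]A^-(\bfl,r)=[E_\la][D_\la]$, with $\co(E_\la)=\ro(D_\la)=\la$. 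I would then invoke Lemma \ref{tri} with $C=A$ and multiply that identity on the left by the idempotent $[\diag(\ro(E_\la))]$ and on the right by $[\diag(\co(D_\la))]$. Using \eqref{[diag(la)][A]} and \eqref{[diag(la)][diag(mu)]} again: the left-hand side becomes $[E_\la][D_\la]$; the term $A(\bfl,r)=\sum_{\mu}[A+\diag(\mu)]$ collapses to the single summand $[B_{A,\la}]$ (the $\ro$- and $\co$-conditions force $\mu=(\la_i-\sg_i(A))_{i\in\mbz}$); and each $X(\bfj,r)$ with $X\p A$ collapses to a scalar multiple of a single $[X+\diag(\nu)]$ (possibly $0$). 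Since a diagonal entry never enters $\sg_{i,j}$ when $i\neq j$, we have $\sg_{i,j}(X+\diag(\nu))=\sg_{i,j}(X)$, so $X\p A$ forces $X+\diag(\nu)\p B_{A,\la}$, giving the displayed identity with a priori $g_{B'}\in\mbq(\up)$. Finally, the left-hand side lies in $\afSr$, since each of $A^+(\bfl,r),\,[\diag(\la)],\,A^-(\bfl,r)$ is a $\mbz$-combination of the $[\cdot]$'s and $\afSr$ is a $\sZ$-subalgebra of $\afbfSr$; therefore $g_{B'}\in\sZ$.

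The argument then concludes formally: since $\afThnr$ is finite, fix a total order refining $\p$; matching indices via $(A,\la)\leftrightarrow B_{A,\la}$, the transition matrix from the proposed set to $\{[B]\}_{B\in\afThnr}$ is unitriangular with entries in $\sZ$, hence invertible over $\sZ$, so the proposed set is a $\sZ$-basis of $\afSr$.

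I expect the delicate point to be the leading-term analysis in that main step --- that $[E_\la][D_\la]$ contains $[B_{A,\la}]$ with coefficient $1$ while all other terms are strictly $\p$-below it. This is exactly where the full hypothesis $\la_i\geq\sg_i(A)$ is used --- it is stronger than the bound $\la_i\geq\max(\co_i(A^+),\ro_i(A^-))$ needed merely for $E_\la,D_\la$ to lie in $\afThnr$ --- because it is what guarantees that the two diagonal shifts $\la-\co(A^+)$ and $\la-\ro(A^-)$ combine without collision into the non-negative diagonal $(\la_i-\sg_i(A))_{i\in\mbz}$ of $B_{A,\la}$.
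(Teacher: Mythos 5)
Your proof is correct and follows essentially the approach behind the cited \cite[3.7.7]{DDF}: you identify the index set with $\afThnr$ via $B_{A,\la}=A+\diag(\la-\sg(A))$, apply the triangular relation of Lemma~\ref{tri} with $C=A$ after multiplying by the appropriate diagonal idempotents, observe that the leading term collapses to $[B_{A,\la}]$ with coefficient $1$ while all $X(\bfj,r)$ terms with $X\p A$ collapse to $\sZ$-multiples of $[B']$ with $B'\p B_{A,\la}$ (diagonal shifts do not affect $\sg_{i,j}$ for $i\neq j$), and deduce integrality of the coefficients from the fact that the left-hand side already lies in $\afSr$. Your closing remark correctly pinpoints the role of the hypothesis $\la_i\geq\sg_i(A)$: it is exactly what makes $B_{A,\la}$ a valid element of $\afThnr$ and hence makes the leading coefficient $1$ (rather than the whole $A(\bfl,r)$ term vanishing under the idempotents).
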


The double Ringel--Hall algebra $\dbfHa$ is related to the affine quantum Schur algebra in the following way (cf. \cite{GV,Lu99}).

\begin{Lem}\cite[3.6.3]{DDF}\label{zr}
For $r\geq0$, there is a surjective algebra homomorphism  $\zr:\dbfHa\twoheadrightarrow\afbfSr$ such that
$$\zr(K\su\bfj)=0(\bfj,r),\;\zr(\ti u_A^+)=A(\bfl,r),\;\;\text{and}\;\;
\zr(\ti u_A^-)=(\tA)(\bfl,r),$$
for all $\bfj\in \afmbzn$ and $A\in \afThnp$, where
$\tA$ is the transpose matrix of $A$ and
$\ti u_A^\pm=\up^{\dim \End(M(A))-\dim M(A)}u_A^\pm$.
\end{Lem}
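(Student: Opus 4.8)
The plan is to split the statement into two essentially independent tasks: producing the $\mbq(\up)$-algebra homomorphism $\zr$ with the prescribed values on generators, and proving it is onto. For the first task I would start from the presentation of $\dbfHa$ recorded in \ref{presentation-dbfHa}. Define a homomorphism out of the free $\mbq(\up)$-algebra on the symbols $\Kbfj$ ($\bfj\in\afmbzn$), $u_A^{+}$, $u_A^{-}$ ($A\in\afThnp$) by
$$\Kbfj\longmapsto 0(\bfj,r),\qquad u_A^{+}\longmapsto\up^{\dim M(A)-\dim\End(M(A))}A(\bfl,r),\qquad u_A^{-}\longmapsto\up^{\dim M(A)-\dim\End(M(A))}(\tA)(\bfl,r),$$
and verify that the relations \ref{presentation-dbfHa}(1)--(5) are killed; this yields $\zr$, and rewriting in terms of $\ti u_A^{\pm}=\up^{\dim\End(M(A))-\dim M(A)}u_A^{\pm}$ gives exactly the displayed formulas. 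A more conceptual alternative would be to realise $\afbfSr$ as $\End_{\afHr}(\Og^{\ot r})$ for the affine Hecke algebra $\afHr$, make the natural module $\Og$ into a $\dbfHa$-module, and let $\dbfHa$ act on $\Og^{\ot r}$ through its coproduct, then check this action commutes with $\afHr$; I would nonetheless fall back on the presentation to carry out the relation-checking.

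The relation-checking rests on a short list of multiplication identities in $\afbfSr$ among the elements $A(\bfj,r)$ ($A\in\afThnpm$) and $0(\bfj,r)$, all of which I would obtain from the convolution (pairs-of-lattices) model of $\afbfSr$ by the affine version of the Beilinson--Lusztig--MacPherson matrix calculus. Relations \ref{presentation-dbfHa}(1)--(2) reduce, using \eqref{[diag(la)][A]} and the definitions of $\ro,\co,\bfd$, to $0(\bfj,r)0(\bfj',r)=0(\bfj+\bfj',r)$ together with $0(\bfj,r)[A]=\up^{\ro(A)\cdot\bfj}[A]$ and $[A]0(\bfj,r)=\up^{\co(A)\cdot\bfj}[A]$, and are routine bookkeeping. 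Relations \ref{presentation-dbfHa}(3)--(4) assert that on the upper (resp.\ lower) triangular part of $\afbfSr$ the Hall product of $\dbfHap$ (resp.\ $\dbfHam$) is reproduced by the product of the $A(\bfl,r)$'s; this is the affine analogue of the BLM/Hall multiplication formula, proved by counting chains of lattices --- equivalently, submodules $N\subseteq M_{\field_q}(C)$ with $N\cong M_{\field_q}(B)$ and $M_{\field_q}(C)/N\cong M_{\field_q}(A)$ --- exactly as in the generic case, and matching the outcome with \ref{presentation-dbfHa}(3)--(4) uses the normalising power of $\up$ built into $\ti u_A^{\pm}$. Relation \ref{presentation-dbfHa}(5) is the Drinfeld-double commutator relation, parametrised by $\la,\mu\in\afmbnn$, among the semisimple generators $u_{A_\nu}^{\pm}$ and the torus monomials $\ti K_i=K_iK_{i+1}^{-1}$; under $\zr$ it becomes the $\afbfSr$-incarnation of the same identity, with $u_{A_\nu}^{+}$, $u_{A_\nu}^{-}$ going (up to powers of $\up$) to $A_\nu(\bfl,r)$, $({}^t\!A_\nu)(\bfl,r)$ and each $\ti K_i$ to an appropriate $0(\,\cdot\,,r)$. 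I expect \ref{presentation-dbfHa}(5) to be the main obstacle: it requires the precise commutation formula between the ``$+$'' element $A_\la(\bfl,r)$ and the ``$-$'' element $({}^t\!A_\mu)(\bfl,r)$ in $\afbfSr$, i.e.\ the statement that $\zr$ respects the double structure, and this is the one genuinely intricate computation, everything else being formal.

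Surjectivity should then follow quickly from \ref{integral basis for affine Schur algebras}. First I would show that every idempotent $[\diag(\la)]$ with $\la\in\afLanr$ lies in $\Image(\zr)$: one has $0(\bfj,r)=\zr(\Kbfj)=\sum_{\mu\in\afLanr}\up^{\mu\cdot\bfj}[\diag(\mu)]$, and since $\afLanr$ is finite and the characters $\bfj\mapsto\up^{\mu\cdot\bfj}$ of $\afmbzn$ attached to distinct $\mu$ are distinct (their values already differ at $\bfj=\afbse_i$ for suitable $i$, as $\up$ is an indeterminate) and hence $\mbq(\up)$-linearly independent, finitely many of these equations can be inverted to express each $[\diag(\la)]$ as a $\mbq(\up)$-combination of the $0(\bfj,r)$. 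Next, writing $A=A^{+}+A^{-}\in\afThnpm$ with $A^{+}\in\afThnp$ and $\tr(A^{-})\in\afThnp$, one has $A^{+}(\bfl,r)=\zr(\ti u_{A^{+}}^{+})$ and $A^{-}(\bfl,r)=\zr(\ti u_{\tr(A^{-})}^{-})$. Hence each basis element $A^{+}(\bfl,r)[\diag(\la)]A^{-}(\bfl,r)$ of $\afbfSr$ supplied by \ref{integral basis for affine Schur algebras} is a product of elements of $\Image(\zr)$, so $\zr$ is surjective; the degenerate case $r=0$, where $\afbfSr\cong\mbq(\up)$ and $\zr$ annihilates every $u_A^{\pm}$ with $A\neq 0$, is covered by the same argument.
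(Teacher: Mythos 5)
The paper itself contains no proof of this lemma: it is quoted from [DDF,~3.6.3], the main source for the paper's foundational material, so there is no in-paper argument to compare your proposal against. Evaluated on its own terms, your sketch is a plausible road map, and the surjectivity portion is essentially complete. Over $\mbq(\up)$ one can indeed invert the finitely many relations $0(\bfj,r)=\sum_{\mu\in\afLanr}\up^{\mu\cdot\bfj}[\diag(\mu)]$ to recover each idempotent $[\diag(\la)]$, because $\afLanr$ is finite and the characters $\bfj\mapsto\up^{\mu\cdot\bfj}$ for distinct $\mu$ are pairwise distinct and hence $\mbq(\up)$-linearly independent, and then each element $A^+(\bfl,r)[\diag(\la)]A^-(\bfl,r)$ of the spanning set from \ref{integral basis for affine Schur algebras} is visibly a product of $\zr(\ti u_{A^+}^+)$, a combination of $\zr(K\su\bfj)$'s, and $\zr(\ti u_{{}^t\!(A^-)}^-)$.

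The existence portion, however, has a genuine gap at its centre. You reduce it to checking the relations of \ref{presentation-dbfHa}, you point at (5) as ``the one genuinely intricate computation,'' and you then neither perform it nor reduce it to an identity you establish or cite. But (5) is exactly where the content of [DDF,~3.6.3] lives: it is the statement that the two surjections on $\dbfHap$ and $\dbfHam$, given by Hall/BLM multiplication, together with the assignment on the $\Kbfj$ glue to a single algebra map on the Drinfeld double. Verifying it in $\afbfSr$ requires an explicit commutation formula among $A_\la(\bfl,r)$, $({}^t\!A_\mu)(\bfl,r)$ and the elements $0(\afal_i,r)$, and nothing in your sketch produces that formula, appeals to a comultiplication compatibility, or otherwise discharges the obligation. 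As written, the proposal proves surjectivity \emph{given} existence, but does not establish existence; it is an outline with the hardest step still open, whereas the paper relies on [DDF] to have done precisely that work.
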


For convenience, we set $[A]=0\in\afbfSr$ for $A\not\in\afThnr$.
Then we have the following commutation formula in $\afbfSr$.
\begin{Lem}\label{commuting formula zr(t)[diag(la)]}
Let $\la\in\afmbzn$ and $\nu\in\mbz\afPin$. If
$t\in\dbfHa_{\nu}$, then
$$\zr(t)[\diag(\la)]= [\diag(\la+\nu)]\zr(t).$$
\end{Lem}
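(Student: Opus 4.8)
The plan is to reduce the identity to the generators of $\dbfHa$ and then use the presentation in \ref{presentation-dbfHa} together with the elementary commutation formula \ref{commuting formula K^bfj t}. Concretely, fix $\la\in\afmbzn$ and let $t\in\dbfHa_\nu$ with $\nu\in\mbz\afPin$. Since $\zr$ is an algebra homomorphism and the formula to be proved is multiplicative in $t$ (if it holds for $t\in\dbfHa_{\nu_1}$ and $s\in\dbfHa_{\nu_2}$ then it holds for $ts\in\dbfHa_{\nu_1+\nu_2}$, because $\zr(ts)[\diag(\la)]=\zr(t)\zr(s)[\diag(\la)]=\zr(t)[\diag(\la+\nu_2)]\zr(s)=[\diag(\la+\nu_1+\nu_2)]\zr(t)\zr(s)$), it suffices to verify it on a set of homogeneous algebra generators, namely the $u_A^+$ $(A\in\afThnp)$ of degree $\nu=\deg u_A^+$, the $u_A^-$ of degree $-\deg u_A^+$, and the $K_i^{\pm1}$ of degree $0$.

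For the degree-zero case $t=K^{\bfj}$, one has $\zr(K^{\bfj})$ expressed via $0(\bfj,r)=\sum_{\mu\in\afLanr}\up^{\mu\cdot\bfj}[\diag(\mu)]$ by \ref{zr}, and then \eqref{[diag(la)][diag(mu)]} gives $0(\bfj,r)[\diag(\la)]=\up^{\la\cdot\bfj}[\diag(\la)]=[\diag(\la)]\,0(\bfj,r)$, which is the claim since $\nu=0$. For $t=u_A^{\pm}$ (equivalently $\ti u_A^{\pm}$, which differ only by a unit scalar), \ref{zr} gives $\zr(\ti u_A^+)=A(\bfl,r)=\sum_{\mu\in\afLanr(\,r-\sg(A))}[A+\diag(\mu)]$, and by \eqref{[diag(la)][A]} multiplying on the right by $[\diag(\la)]$ selects the single term with $\la=\co(A+\diag(\mu))$, while multiplying $[\diag(\la+\nu)]$ on the left selects the term with $\la+\nu=\ro(A+\diag(\mu))$; the point is then the bookkeeping identity $\ro(A+\diag(\mu))-\co(A+\diag(\mu))=\ro(A)-\co(A)=\bfd(A)$ restricted to one period, which under the identification \eqref{flat} is exactly $\nu=\deg u_A^+=\sum_i d_i\afal_i$. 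So the two sides pick out the same basis element $[A+\diag(\mu)]$, and the equation holds; the case $t=u_A^-$ is identical after transposing, with $\nu=-\bfd(A)$.

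The only genuine point requiring care — and the step I expect to be the main obstacle to write cleanly — is the index-shift verification, i.e.\ checking that for each $A\in\afThnp$ the column sum $\co$ and row sum $\ro$ of $A+\diag(\mu)$ differ precisely by $\bfd(A)$ under the identification $\afmbzn\cong\mbz^n$ of \eqref{flat}, so that ``right multiplication picks out $\mu$ with $\co=\la$'' and ``left multiplication picks out $\mu$ with $\ro=\la+\nu$'' are consistent and select the same $\mu$. This is just the observation that adding a diagonal matrix does not change $\ro-\co$, combined with formula \eqref{bfd(A)}, but one must track the periodicity conventions. Once this is in place, the general statement follows by the multiplicativity reduction above, since $\dbfHa$ is generated as an algebra by the $u_A^\pm$ and $K_i^{\pm1}$ and every element of $\dbfHa_\nu$ is a $\mbq(\up)$-linear combination of products of such generators whose degrees sum to $\nu$; linearity in $t$ is immediate because both sides are $\mbq(\up)$-linear. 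Finally, with the convention $[A]=0$ for $A\notin\afThnr$ the possibility $\la\notin\afLanr$ (or $\la+\nu\notin\afLanr$) is handled automatically, both sides being $0$.
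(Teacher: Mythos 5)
Your proposal is correct and follows essentially the same route as the paper: both arguments combine the action formulas \eqref{[diag(la)][A]} with the observation that $\ro(A)-\co(A)$ equals $\deg(u_A^{\pm})$ (the paper's \eqref{deg(uA^+)}), and extend by linearity and multiplicativity from the generators $K^{\bfj}$, $u_A^{\pm}$. One small slip: in the chain $\ro(A+\diag(\mu))-\co(A+\diag(\mu))=\ro(A)-\co(A)=\bfd(A)$ the last equality should read $\ro(A)-\co(A)=\deg(u_A^+)=\sum_i d_i\afal_i$, not $\bfd(A)=(d_i)_i$ itself — but your very next clause states the correct identity, so the argument is unaffected.
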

\begin{proof}
Applying \eqref{[diag(la)][A]} gives
\begin{equation}\label{commuting formula A(bfl,r)[diag(la)]}
C(\bfl,r)[\diag(\la)]=[C+\diag(\la-\co(C))]
=[\diag(\la+\ro(C)-\co(C))]C(\bfl,r)
\end{equation}
for $C\in\afThnpm$ and $\la\in\afmbzn$.
Furthermore by \eqref{bfd(A)}, we conclude that
\begin{equation}\label{deg(uA^+)}
\deg(u_A^+)=\ro(A)-\co(A)\ \text{and}\ \deg (u_A^-)=\co(A)-\ro(A)
\end{equation}
for $A\in\afThnp$.
Combining  \eqref{commuting formula A(bfl,r)[diag(la)]} and
\eqref{deg(uA^+)}
shows that
$\zr(\tu_A^+)[\diag(\la)]=
[\diag(\la+\deg(\tu_A^+))]\zr(u_A^+)$ and
$\zr(\tu_A^-)[\diag(\la)]
=[\diag(\la+\deg(\tu_A^-))]\zr(\tu_A^-).$
This finishes the proof.
\end{proof}
Finally, we prove that the map $\zr:\dbfHa\ra\afbfSr$ induces a natural
algebra homomorphism $\dzr:\ddbfHa\ra\afbfSr$.

\begin{Lem}\label{dzr}
There is an algebra homomorphism $\dzr:\ddbfHa\ra\afbfSr$ such that
$$\dzr(\pi_{\la,\mu}(u))=[\diag(\la)]\zr(u)[\diag(\mu)]$$
for $u\in\dbfHa$ and $\la,\mu\in\afmbzn$.
\end{Lem}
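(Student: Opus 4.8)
The plan is to construct $\dzr$ componentwise on the direct sum $\ddbfHa=\bop_{\la,\mu\in\afmbzn}{}_\la\dbfHa_\mu$ using the formula $\dzr(\pi_{\la,\mu}(u))=[\diag(\la)]\zr(u)[\diag(\mu)]$, and then check that it is well defined, additive, and multiplicative. First I would verify well-definedness: if $u$ lies in the left ideal $\sum_{\bfj}(\Kbfj-\up^{\la\cdot\bfj})\dbfHa$ or the right ideal $\sum_{\bfj}\dbfHa(\Kbfj-\up^{\mu\cdot\bfj})$, then $[\diag(\la)]\zr(u)[\diag(\mu)]=0$. For this, apply $\zr$ to $(\Kbfj-\up^{\la\cdot\bfj})v$ and use $\zr(\Kbfj)=0(\bfj,r)=\sum_{\nu\in\afLanr}\up^{\nu\cdot\bfj}[\diag(\nu)]$ together with \eqref{[diag(la)][diag(mu)]}: left-multiplying by $[\diag(\la)]$ kills the sum because $[\diag(\la)]\,0(\bfj,r)=\up^{\la\cdot\bfj}[\diag(\la)]$, so $[\diag(\la)]\zr((\Kbfj-\up^{\la\cdot\bfj})v)=0$; symmetrically on the right using the second formula in \eqref{[diag(la)][A]} and \eqref{[diag(la)][diag(mu)]}. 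Hence the map on each summand ${}_\la\dbfHa_\mu$ is well defined, and since $\ddbfHa$ is the direct sum of these, assembling them gives a well-defined $\mbq(\up)$-linear map $\dzr:\ddbfHa\ra\afbfSr$.

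Next I would check multiplicativity. Take homogeneous representatives: by \ref{Lem for def of ddbfHa} a nonzero class in ${}_{\la'}\dbfHa_{\mu'}$ is represented by some $t\in\dbfHa_{\la'-\mu'}$, and similarly $s\in\dbfHa_{\la''-\mu''}$ represents a class in ${}_{\la''}\dbfHa_{\mu''}$. The product in $\ddbfHa$ is $\pi_{\la',\mu''}(ts)$ when $\mu'=\la''$ and $0$ otherwise. So I must show
\[
[\diag(\la')]\zr(t)[\diag(\mu')]\cdot[\diag(\la'')]\zr(s)[\diag(\mu'')]=\begin{cases}[\diag(\la')]\zr(ts)[\diag(\mu'')],&\mu'=\la'',\\ 0,&\text{otherwise}.\end{cases}
\]
When $\mu'\not=\la''$ this is immediate from \eqref{[diag(la)][diag(mu)]}. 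When $\mu'=\la''$, the middle factor $[\diag(\mu')][\diag(\la'')]=[\diag(\mu')]$ collapses, leaving $[\diag(\la')]\zr(t)[\diag(\mu')]\zr(s)[\diag(\mu'')]$; I then use \ref{commuting formula zr(t)[diag(la)]} to move $[\diag(\mu')]$ past $\zr(s)$ — but actually it is cleaner to instead insert $[\diag(\mu')]=[\diag(\la'-(\la'-\mu'))]$ and apply \ref{commuting formula zr(t)[diag(la)]} with $t\in\dbfHa_{\la'-\mu'}$ to rewrite $[\diag(\la')]\zr(t)=\zr(t)[\diag(\mu')]$ (reading that lemma in reverse), so the expression becomes $\zr(t)[\diag(\mu')]\zr(s)[\diag(\mu'')]$; then $[\diag(\mu')]\zr(s)=\zr(s)[\diag(\mu'-(\la''-\mu''))]=\zr(s)[\diag(\mu'')]$ using $\mu'=\la''$ and again \ref{commuting formula zr(t)[diag(la)]}; this matches $[\diag(\la')]\zr(ts)[\diag(\mu'')]$ after moving $[\diag(\la')]$ back across $\zr(t)$, since $\zr$ is an algebra map so $\zr(t)\zr(s)=\zr(ts)$. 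Finally one checks $\dzr$ sends the identity of $\ddbfHa$ (which is $\sum_{\la}\pi_{\la,\la}(1)$, or rather its finitely-supported pieces relevant to the $r$-weight decomposition) to $\sum_{\la\in\afLanr}[\diag(\la)]=\mathrm{id}_{\afbfSr}$, using that $\zr(1)=1$ and \eqref{[diag(la)][diag(mu)]}.

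The main obstacle I anticipate is bookkeeping the weight-grading conditions so that the commutation moves in \ref{commuting formula zr(t)[diag(la)]} are applied with the correct degree $\nu=\la'-\mu'$ or $\la''-\mu''$ — one must be careful that representatives are genuinely homogeneous (guaranteed by \ref{Lem for def of ddbfHa}) and that $\zr$ respects this, which follows from \eqref{deg(uA^+)} and the description of $\zr$ on generators in \ref{zr}. There is also a mild subtlety that $\afbfSr$ is not unital in the usual sense relative to all of $\ddbfHa$ (the identity of $\afbfSr$ is $\sum_{\la\in\afLanr}[\diag(\la)]$, a finite sum, whereas $\ddbfHa$ has the infinite family of orthogonal idempotents $\pi_{\la,\la}(1)$), so $\dzr$ is not unital on the nose; but this causes no problem for the homomorphism property and is the expected behavior for maps out of modified quantum algebras. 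Everything else is a routine manipulation with the idempotents $[\diag(\la)]$ and the relations \eqref{[diag(la)][A]}–\eqref{[diag(la)][diag(mu)]}.
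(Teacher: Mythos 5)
Your proposal is correct and follows essentially the same route as the paper: well-definedness by showing the sandwich $[\diag(\la)]\zr(\cdot)[\diag(\mu)]$ annihilates both ideals (using $\zr(\Kbfj)=0(\bfj,r)$ and \eqref{[diag(la)][diag(mu)]}), and multiplicativity by passing to homogeneous representatives via \ref{Lem for def of ddbfHa} and commuting idempotents through $\zr(t)$ via \ref{commuting formula zr(t)[diag(la)]}. Your version is more detailed in spelling out the commutations, but the key lemmas invoked and the case split on $\mu'=\la''$ versus $\mu'\ne\la''$ coincide with the paper's argument.
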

\begin{proof}
Clearly we have
$$[\diag(\la)]\zeta_r\bigg(\sum_{\bfj\in\afmbzn}(\Kbfj-
 \up^{\la\cdot\bfj})\dbfHa+\sum_{\bfj\in\afmbzn}\dbfHa(\Kbfj
 -\up^{\mu\cdot\bfj})\bigg)[\diag(\mu)]=0$$
for $\la,\mu\in\afmbzn$. Thus $\dzr$ is well defined.

Assume $\la',\mu',\la'',\mu''\in\afmbzn$ is such that $\la'-\mu',\la''-\mu''\in\mbz\afPin$.
Let $t\in\dbfHa_{\la'-\mu'}$ and $s\in\dbfHa_{\la''-\mu''}$.
If $\mu'\not=\la''$, then by \eqref{[diag(la)][diag(mu)]},  $\dzr(\pi_{\la',\mu'}(t)\pi_{\la'',\mu''}(s))=0=
\dzr(\pi_{\la',\mu'}(t))\dzr(\pi_{\la'',\mu''}(s))$. If $\mu'=\la''$, then by \eqref{[diag(la)][diag(mu)]} and \ref{commuting formula zr(t)[diag(la)]},  $\dzr(\pi_{\la'\mu'}(t))\dzr(\pi_{\mu'\mu''}(s))
=[\diag(\la')]\zr(t)\zr(s)[\diag(\mu'')]=
\dzr(\pi_{\la'\mu''}(ts))
=\dzr(\pi_{\la'\mu'}(t)\pi_{\mu'\mu''}(s))$.
\end{proof}

Recall that $\ddbfHa$ is a $\dbfHa$-bimodule defined by \eqref{bimodule}.
\begin{Lem}\label{prop of dzr}
We have $\dzr(u_1u_2u_3)=\zr(u_1)\dzr(u_2)\zr(u_3)$ for $u_1,u_3\in\dbfHa$ and $u_2\in\ddbfHa$.
\end{Lem}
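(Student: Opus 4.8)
The plan is to reduce the claim to the definition of the bimodule structure \eqref{bimodule} on $\ddbfHa$ together with the multiplicativity of $\dzr$ established in \ref{dzr}, using a homogeneity reduction to put everything in a form where \ref{commuting formula zr(t)[diag(la)]} applies. First I would observe that all three maps appearing in the identity ($\zr$ on the outer factors, $\dzr$ on the middle factor, and the bimodule action) are linear, and that $\ddbfHa=\bop_{\la,\mu}{}_\la\dbfHa_\mu$, so it suffices to check the identity on elements $u_2=\pi_{\la,\mu}(s)$ with $s\in\dbfHa$; moreover, by \ref{Lem for def of ddbfHa} we may assume $s\in\dbfHa_{\la-\mu}$. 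Similarly, since $\dbfHa=\bop_\nu\dbfHa_\nu$, I would reduce to the case where $u_1\in\dbfHa_{\nu'}$ and $u_3\in\dbfHa_{\nu''}$ are homogeneous.

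With these reductions in place, the left-hand side is $\dzr(u_1\,\pi_{\la,\mu}(s)\,u_3)$, which by \eqref{bimodule} equals $\dzr(\pi_{\la+\nu',\mu-\nu''}(u_1su_3))$. Now $u_1su_3\in\dbfHa_{\nu'+(\la-\mu)+\nu''}$, and by \ref{dzr} this is $[\diag(\la+\nu')]\,\zr(u_1su_3)\,[\diag(\mu-\nu'')]$, which by multiplicativity of $\zr$ equals $[\diag(\la+\nu')]\,\zr(u_1)\zr(s)\zr(u_3)\,[\diag(\mu-\nu'')]$. On the other hand, the right-hand side is $\zr(u_1)\,\dzr(\pi_{\la,\mu}(s))\,\zr(u_3)=\zr(u_1)\,[\diag(\la)]\,\zr(s)\,[\diag(\mu)]\,\zr(u_3)$. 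So the whole lemma comes down to the two commutation identities
\begin{equation*}
\zr(u_1)[\diag(\la)]=[\diag(\la+\nu')]\zr(u_1),\qquad
[\diag(\mu)]\zr(u_3)=\zr(u_3)[\diag(\mu-\nu'')],
\end{equation*}
and these are exactly \ref{commuting formula zr(t)[diag(la)]} applied to $u_1\in\dbfHa_{\nu'}$ and to $u_3\in\dbfHa_{\nu''}$ (the second after rewriting $[\diag(\mu)]\zr(u_3)$ as $\zr(u_3)[\diag(\mu-\nu'')]$ via that lemma with $\la$ replaced by $\mu-\nu''$). Substituting, the right-hand side becomes $[\diag(\la+\nu')]\zr(u_1)\zr(s)\zr(u_3)[\diag(\mu-\nu'')]$, matching the left-hand side.

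The only genuinely delicate point is bookkeeping: one must make sure that the degree shifts coming from \eqref{bimodule} and from \ref{commuting formula zr(t)[diag(la)]} are consistently signed (the bimodule action shifts the left index up by $\nu'$ and the right index down by $\nu''$, while \ref{commuting formula zr(t)[diag(la)]} moves a degree-$\nu$ element past $[\diag(\la)]$ by replacing $\la$ with $\la+\nu$), and that the case $u_1su_3=0$ or non-homogeneous middle factor is absorbed by the linearity/homogeneity reduction. There is no serious obstacle; the argument is a formal consequence of \ref{dzr}, \eqref{bimodule}, \ref{Lem for def of ddbfHa}, and \ref{commuting formula zr(t)[diag(la)]}.
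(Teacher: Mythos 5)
Your proof is correct and follows essentially the same route as the paper's: reduce to homogeneous components, unwind the bimodule action \eqref{bimodule} and the definition of $\dzr$ from \ref{dzr}, and close the gap with the commutation formula \ref{commuting formula zr(t)[diag(la)]}. The only difference is cosmetic: the paper runs the chain of equalities from $\zr(u_1)\dzr(\pi_{\la,\mu}(s))\zr(u_3)$ to $\dzr(u_1\pi_{\la,\mu}(s)u_3)$, while you go in the opposite direction.
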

\begin{proof}
By \ref{commuting formula zr(t)[diag(la)]}, for $\la,\mu,\nu',\nu''\in\afmbzn$ and
$u'\in\dbfHa_{\nu'}$, $u''\in\dbfHa_{\nu''}$, $u\in\dbfHa_{\la-\mu}$,  we have
$\zr(u')\dzr(\pi_{\la,\mu}(u))\zr(u'')
=\zr(u')\zr(u)[\diag(\mu)]\zr(u'')
=\zr(u')\zr(u)\zr(u'')[\diag(\mu-\nu'')]
=\dzr(\pi_{\la+\nu',\mu-\nu''}(u'uu''))=\dzr(u'\pi_{\la\mu}(u)u'')$.
\end{proof}

\section{The integral form $\ddHa$ of $\ddbfHa$}
Let $\dHap=\spann_\sZ\{\tu_A^+\mid A\in\afThnp\}$,
$\dHam=\spann_\sZ\{\tu_A^-\mid A\in\afThnp\}$, and let
$\dHaz$ be the $\sZ$-subalgebra of $ \dbfHa$
generated by $K_i^{\pm1}$ and $\leb{K_i;0\atop t}\rib$ for $1\leq i\leq n$
and $t>0$, where
$\big[ {K_i;0 \atop t} \big] =\prod_{s=1}^t \frac
{K_i\up^{-s+1}-K_i^{-1}\up^{s-1}}{\up^s-\up^{-s}}.$
Let
$
\dHa=\dHap\dHaz\dHam$ and let
\begin{equation}\label{ddHa}
\ddHa=\spann_{\sZ}\{\tu_A^+1_\la \tu_B^-\mid A,B\in\afThnp,\,\la\in \afmbzn\}\han\ddbfHa,
\end{equation}
where $1_\la=\pi_{\la,\la}(1)$.
Clearly by definition we have
$$\ddHa=\bop_{\la,\mu\in\afmbzn}\pi_{\la,\mu}(\dHa).$$
Furthermore by \eqref{tri deco of dbfHa}, the set
$\{\tu_A^+1_\la \tu_B^-\mid A,B\in\afThnp,\,\la\in \afmbzn\}$ forms a $\sZ$-basis for $\ddHa$.

In \cite[3.8.6]{DDF}, it is conjectured that
$\dHa$ is a $\sZ$-subalgebra of $\dbfHa$.
We will prove in \ref{interal form} that the modified version
$\ddHa$ of $\dHa$ is a $\sZ$-subalgebra of $\ddbfHa$, and
prove in \ref{integral quantum affine Schur Weyl duality} that the restriction of $\dzr:\ddbfHa\ra\afbfSr$ to $\ddHa$ gives a  surjective algebra homomorphism from $\ddHa$ to $\afSr$. Furthermore we will establish certain relation between $\dHa$ and $\ddHa$ in \eqref{relation between dHa and ddHa}.

For $A,B\in\afThnp$ we write
\begin{equation}\label{tuB times tuA}
\tu_B^-\tu_A^+=\sum_{C\in\afThnpm\atop\bfj\in\afmbzn,\,j_n=0}
g_{A,B,C,\bfj}\tu^+_{C^+}\tu^-_{\tr(C^-)}\ti K_1^{j_1}\cdots\ti K_{n-1}^{j_{n-1}}
\end{equation}
where $\tr(C^-)$ is the transpose matrix of $C^-$ and $g_{A,B,C,\bfj}\in\mbq(\up)$

For $A,B\in\afThnp$, $C\in\afThnpm$ and $\la\in\afmbzn$, let
$$f_{A,B,C,\la}=\sum_{\bfj\in\afmbzn,\,j_n=0}g_{A,B,                C,\bfj}
\up^{(\la_1-\la_2)j_1+\cdots+(\la_{n-1}-\la_n)j_{n-1}}.$$
\begin{Lem}\label{key}
Fix $A,B\in\afThnp$ and $\la\in\afmbzn$. Then we have
$f_{A,B,C,\la}\in\sZ$ for $C\in\afThnpm$.
\end{Lem}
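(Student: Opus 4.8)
The plan is to recognize $f_{A,B,C,\la}$ as a structure constant in the modified algebra $\ddbfHa$, transport it via the homomorphism $\dzr$ of \ref{dzr} into the affine quantum Schur algebra $\afbfSr$ for a well-chosen $r$, and read off integrality from the $\sZ$-basis of $\afSr$ furnished by \ref{integral basis for affine Schur algebras}. Begin with a reduction: the exponent $(\la_1-\la_2)j_1+\cdots+(\la_{n-1}-\la_n)j_{n-1}$ depends on $\la$ only through the successive differences, so $f_{A,B,C,\la}$ is unchanged when $\la$ is replaced by $\la+c(1,\ldots,1)$; moreover \eqref{tuB times tuA} is a finite sum, only finitely many $C$ occur, and by comparing $\afmbzn$-degrees (the element $\ti K_1^{j_1}\cdots\ti K_{n-1}^{j_{n-1}}$ being homogeneous of degree $0$) each such $C$ satisfies $\deg(\tu^+_{C^+}\tu^-_{\tr(C^-)})=\deg(\tu_B^-\tu_A^+)=:\ga\in\mbz\afPin$, while the remaining $C$ give $f_{A,B,C,\la}=0$. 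Taking $c\gg0$ we may therefore assume $\la\in\afLanr$ with $r=\sg(\la)$, $\nu:=\la+\ga\in\afLanr$, and all coordinates of $\la$ exceeding any prescribed bound depending on $A$, $B$ and these finitely many $C$. Writing $1_\mu=\pi_{\mu,\mu}(1)$, one has $\ti K_1^{j_1}\cdots\ti K_{n-1}^{j_{n-1}}\cdot1_\la=\up^{(\la_1-\la_2)j_1+\cdots+(\la_{n-1}-\la_n)j_{n-1}}1_\la$ directly from the definition of $\ddbfHa$, since $\ti K_1^{j_1}\cdots\ti K_{n-1}^{j_{n-1}}=K^{\sum_{i=1}^{n-1}j_i\afal_i}$; sandwiching \eqref{tuB times tuA} between $1_\nu$ and $1_\la$ (and using $1_\nu t\,1_\la=\pi_{\nu,\la}(t)$ for $t\in\dbfHa_\ga$) then gives in $\ddbfHa$
\begin{equation*}
1_\nu\tu_B^-\tu_A^+1_\la=\sum_{C}f_{A,B,C,\la}\,1_\nu\tu^+_{C^+}\tu^-_{\tr(C^-)}1_\la .
\end{equation*}

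Apply $\dzr$. By \ref{zr} the left-hand side becomes $[\diag(\nu)]\,(\tr B)(\bfl,r)\,A(\bfl,r)\,[\diag(\la)]$, which lies in $\afSr$ since each of the four factors does --- $(\tr B)(\bfl,r)$ and $A(\bfl,r)$ being $\sZ$-combinations (with all coefficients $1$) of basis elements $[X]$ --- and $\afSr$ is a $\sZ$-subalgebra; the right-hand side becomes $\sum_C f_{A,B,C,\la}\,b_C$, where $b_C:=[\diag(\nu)]\,C^+(\bfl,r)\,C^-(\bfl,r)\,[\diag(\la)]$. The key claim is that, for the above $\la$, every $b_C$ with $C$ of degree $\ga$ is precisely one of the $\sZ$-basis elements of $\afSr$ listed in \ref{integral basis for affine Schur algebras}, distinct $C$ yielding distinct ones. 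To see this, track row and column sums through \eqref{[diag(la)][A]}: $[\diag(\nu)]C^+(\bfl,r)=C^+(\bfl,r)[\diag(\mu_0)]=[C^++\diag(\nu-\ro(C^+))]$ with $\mu_0:=\la+\ro(C^-)-\co(C^-)$, right multiplication by $C^-(\bfl,r)$ selects the unique surviving summand, and the trailing $[\diag(\la)]$ acts trivially because the outcome already has column sum $\la$; thus $b_C=C^+(\bfl,r)[\diag(\mu_0)]C^-(\bfl,r)$. Since $\mu_0$ differs from $\la$ by a fixed vector and $\sg(\mu_0)=r$, enlarging $\la$ forces $(\mu_0)_i\ge\sg_i(C)$ for every $i$, which is exactly the condition making $C^+(\bfl,r)[\diag(\mu_0)]C^-(\bfl,r)$ a basis element in \ref{integral basis for affine Schur algebras}; and distinct $C\in\afThnpm$ index distinct such basis elements. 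Granting the claim, the displayed identity expresses an element of the $\sZ$-free module $\afSr$ as a $\mbq(\up)$-linear combination of distinct members of a $\sZ$-basis with coefficients $f_{A,B,C,\la}$; hence $f_{A,B,C,\la}\in\sZ$ for all $C$ of degree $\ga$, and $f_{A,B,C,\la}=0\in\sZ$ for the rest. By the shift-invariance this proves the lemma for the original $\la$.

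The main obstacle is the step just sketched: one must verify that, after sandwiching between the two diagonal idempotents, the ``wrong-order'' product $C^+(\bfl,r)C^-(\bfl,r)$ collapses to a single member of the distinguished $\sZ$-basis of \ref{integral basis for affine Schur algebras} rather than to a $\mbq(\up)$-combination of several basis elements --- otherwise one could only recover $f_{A,B,C,\la}$ by solving a triangular linear system whose off-diagonal entries (the coefficients $h_{C,X,\bfj;r}$ of \ref{tri}) lie merely in $\mbq(\up)$, which would not give integrality. What makes the collapse work is the row/column-sum bookkeeping combined with the freedom, afforded by the shift-invariance of $f_{A,B,C,\la}$, to push $\la$ far into the dominant region so that the diagonal shift $\mu_0$ meets the positivity constraint $(\mu_0)_i\ge\sg_i(C)$ built into \ref{integral basis for affine Schur algebras}.
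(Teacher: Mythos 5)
Your proposal is correct, and it follows a genuinely different route from the paper's. The paper projects only on the \emph{right} by $[\diag(\mu)]$ with $\mu=\la+m\mathbf 1$ large, invokes the triangular relation \ref{tri} to rewrite $C^+(\bfl,r)C^-(\bfl,r)[\diag(\mu)]$ as a unitriangular $\mbq(\up)$-combination of the $\sZ$-basis $\{[T]\}$ of $\afSr$, and then argues by contradiction on a $\pr$-maximal element of the set of ``bad'' $C$ (the coefficient $l_{C_0}$ extracted there is exactly $f_{A,B,C_0,\la}$ thanks to maximality and $t_{C_0,C_0}=1$). You instead project on \emph{both} sides, by $1_\nu$ and $1_\la$ with $\nu=\la+\ga$, and observe that each sandwiched term collapses precisely to a single element $C^+(\bfl,r)[\diag(\mu_0(C))]C^-(\bfl,r)$ of the distinguished $\sZ$-basis of \ref{integral basis for affine Schur algebras}, with distinct $C$ giving distinct basis elements; integrality of the coefficients then falls out immediately, without \ref{tri} and without any induction on $\pr$. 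Both proofs use the same shift device ($f_{A,B,C,\la}$ depends on $\la$ only through consecutive differences, so one may push $\la$ deep into the dominant region before specializing to a fixed $r$); the real divergence is your replacement of the unitriangularity/contradiction argument by a direct identification with the basis of \ref{integral basis for affine Schur algebras}. Your approach is shorter once that basis is in hand; the paper's works with the more primitive $\{[T]\}$ basis and is therefore closer in spirit to how \ref{integral basis for affine Schur algebras} itself is proved. One small point worth making explicit in your write-up: the step $b_C=C^+(\bfl,r)[\diag(\mu_0)]C^-(\bfl,r)$ also requires $\nu-\ro(C^+)\ge 0$ and $\la-\co(C^-)\ge 0$ (so that neither diagonal projection kills the term), and both are subsumed under the ``$\la$ sufficiently large'' reduction you already invoke for the condition $(\mu_0)_i\ge\sg_i(C)$.
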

\begin{proof}
Let $\sI=\{C\in\afThnpm\mid f_{A,B,C,\la}\not=0\}$. Then $\sI$ is a finite set. It is enough to prove $\sJ:=\{C\in\sI\mid f_{A,B,C,\la}\not\in\sZ\}=\emptyset$. Suppose this is not the case. We choose a maximal element $C_0$ in $\sJ$ with respect to $\pr$. Then $f_{A,B,C_0,\la}\not\in\sZ$. Furthermore, we choose $m>0$ such that $\la+m{\bf 1}\geq\co(C)$ for all $C\in\sI$. Let $\mu=\la+m{\bf 1}\in\afmbnn$, where ${\bf 1}=(\cdots,1,\cdots,1,\cdots)\in\afLann$. Let $r=\sg(\mu)\geq 0$.

Applying \eqref{[diag(la)][diag(mu)]} gives
\begin{equation*}
\begin{split}
\zr(\ti K_1^{j_1}\cdots \ti K_{n-1}^{j_{n-1}})[\diag(\mu)]&=\up^{(\mu_1-\mu_2)j_1+
\cdots+(\mu_{n-1}-\mu_n)j_{n-1}}[\diag(\mu)]\\
&=
\up^{(\la_1-\la_2)j_1+
\cdots+(\la_{n-1}-\la_n)j_{n-1}}[\diag(\mu)].
\end{split}
\end{equation*}
Combining this  with \ref{zr} and \eqref{tuB times tuA} yields
\begin{equation*}
\begin{split}
\zr(\tu_B^-\tu_A^+)[\diag(\mu)]
&=\sum_{C\in\afThnpm\atop\bfj\in\afmbzn,\,j_n=0}
g_{A,B,C,\bfj}C^+(\bfl,r)C^-(\bfl,r)\zr(\ti K_1^{j_1}\cdots
\ti K_{n-1}^{j_{n-1}})[\diag(\mu)]\\
&=
\sum_{C\in\afThnpm}
f_{A,B,C,\la}C^+(\bfl,r)C^-(\bfl,r)[\diag(\mu)].
\end{split}
\end{equation*}
Since $\zr(\tu_B^-\tu_A^+)[\diag(\mu)]=(\tB)(\bfl,r)A(\bfl,r)[\diag(\mu)]\in\afSr$ we conclude that
\begin{equation}\label{Y}
\begin{split}
Y:&=\zr(\tu_B^-\tu_A^+)[\diag(\mu)]-\sum_{C\in\afThnpm\atop C\not\in\sJ}
f_{A,B,C,\la}C^+(\bfl,r)C^-(\bfl,r)[\diag(\mu)]\\
&=\sum_{C\in\sJ}
f_{A,B,C,\la}C^+(\bfl,r)C^-(\bfl,r)[\diag(\mu)]\in\afSr.
\end{split}
\end{equation}
It follows from \ref{tri} that
\begin{equation*}
\begin{split}
Y
&=\sum_{C\in\sJ}f_{A,B,C,\la}\bigg(
C(\bfl,r)+
\sum_{X\in\afThnpm\atop X\p C,\,\bfj\in\afmbzn}h_{C,X,\bfj;r}X(\bfj,r)
\bigg)[\diag(\mu)]\\
&=\sum_{C\in\sJ}f_{A,B,C,\la}\bigg(
\sum_{X\in\afThnpm\atop X\pr C}t_{C,X}[X+\diag(\mu-\co(X))]
\bigg)\\
&=\sum_{X\in\afThnpm}l_{X}[X+\diag(\mu-\co(X))]
\end{split}
\end{equation*}
where $t_{C,C}=1$,
$t_{C,X}=\sum_{\bfj\in\afmbzn}h_{C,X,\bfj;r}
\up^{\bfj\cdot(\mu-\co(X))}$
for  $X\p C$ and $$l_{X}=\sum_{C\in\sJ\atop X\pr C}f_{A,B,C,\la}t_{C,X}.$$
This, together with \eqref{Y} and the fact that the set $\{[T]\mid T\in\afThnr\}$ forms a $\sZ$-basis for $\afSr$, implies that
that $l_X\in\sZ$ for all $X\in\afThnpm$. Thus $f_{A,B,C_0,\la}=l_{C_0}\in\sZ$. This is a contradiction.
\end{proof}

\begin{Thm}\label{interal form}
The $\sZ$-module $\ddHa$ is a $\sZ$-subalgebra of $\ddbfHa$. Thus $\ddHa$ is the integral form for $\ddbfHa$.
\end{Thm}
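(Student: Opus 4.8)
The plan is to show that $\ddHa$ is closed under multiplication, which by the triangular decomposition $\ddHa = \spann_\sZ\{\tu_A^+ 1_\la \tu_B^-\}$ reduces to controlling products of the form $(\tu_A^+ 1_{\la'} \tu_B^-)(\tu_C^+ 1_{\la''} \tu_D^-)$. Using the bimodule structure \eqref{bimodule}, such a product is $\tu_A^+ \pi_{?,?}(\tu_B^- \tu_C^+) \tu_D^-$ up to the matching condition on the middle weights, so everything comes down to understanding $\tu_B^- \tu_C^+$ inside $\ddbfHa$. First I would take the commutation formula \eqref{tuB times tuA}, which expands $\tu_B^-\tu_A^+$ in $\dbfHa$ as a $\mbq(\up)$-combination of terms $\tu^+_{C^+}\tu^-_{\tr(C^-)}\ti K_1^{j_1}\cdots\ti K_{n-1}^{j_{n-1}}$, and push it through the projection $\pi_{\la,\mu}$. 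Since $\pi_{\la,\mu}$ sends each $\ti K_i$ to the scalar $\up^{(\la_i-\la_{i+1})}$ (or, more precisely, sends $\ti K^\nu$ acting on the weight-$\mu$ slot to $\up^{\nu\cdot(\text{appropriate weight})}$), the torus factors collapse and the coefficient of $\tu^+_{C^+} 1_? \tu^-_{\tr(C^-)}$ becomes exactly $f_{A,B,C,\la}$ for the relevant $\la$. By Lemma \ref{key}, $f_{A,B,C,\la}\in\sZ$, so $\pi_{\la,\mu}(\tu_B^-\tu_A^+)$ lies in $\spann_\sZ\{\tu^+_{C^+} 1_\nu \tu^-_{\tr(C^-)}\}\subseteq\ddHa$.

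The remaining ingredients are: (i) $\dHap$ and $\dHam$ are closed under multiplication over $\sZ$ — this is immediate from relations (3) and (4) of Lemma \ref{presentation-dbfHa} together with $\vi^C_{A,B}\in\mbz[\up^2]$ and the fact that $\tu^\pm_A$ differs from $u^\pm_A$ by a unit power of $\up$, using that the structure constants absorb the $\up^{\lan\bfd(A),\bfd(B)\ran}$ prefactors integrally; and (ii) $\dHaz$ is an $\sZ$-subalgebra by definition, and $1_\la \dHaz$ is spanned over $\sZ$ by the single element $1_\la$ since $\pi_{\la,\la}$ sends $K_i\mapsto\up^{\la_i}$ and the divided powers $\leb{K_i;0\atop t}\rib\mapsto\gp{\la_i}{t}\in\mbz$. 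So to multiply $\tu_A^+ 1_{\la'}\tu_B^- \cdot \tu_C^+ 1_{\la''}\tu_D^-$ I would: move $1_{\la'}$ to the right past $\tu_B^-$ using the bimodule relation (it lands on a shifted weight), then rewrite $\tu_B^-\tu_C^+$ using the above expansion, obtaining an $\sZ$-combination of $\tu^+_{X}\, 1_\nu\, \tu^-_{Y}$ sandwiched between $\tu_A^+$ on the left and $\tu_D^-$ on the right, then absorb $\tu_A^+\tu^+_X$ into $\dHap$ and $\tu^-_Y\tu_D^-$ into $\dHam$, all over $\sZ$. The matching condition from the product definition in $\ddbfHa$ either kills the term or identifies the two middle weight labels; either way the result stays in $\ddHa$.

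The main obstacle I expect is purely bookkeeping rather than conceptual: keeping track of exactly which weight $1_\nu$ appears after pushing the idempotents through the degree-$\nu$ homogeneous pieces (Lemma \ref{Lem for def of ddbfHa} forces $\nu = \la-\mu$, which pins it down) and verifying that the scalar produced when $\pi_{\la,\mu}$ hits $\ti K_1^{j_1}\cdots\ti K_{n-1}^{j_{n-1}}$ is precisely the exponent appearing in the definition of $f_{A,B,C,\la}$ — i.e., that $(\mu_1-\mu_2)j_1+\cdots+(\mu_{n-1}-\mu_n)j_{n-1}$ depends only on $\la$ modulo the constraint $j_n=0$, which is the same normalization already exploited in the proof of Lemma \ref{key}. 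Once that identification is made, integrality is immediate from Lemma \ref{key}, and the final assertion $\ddbfHa = \ddHa\ot_\sZ\mbq(\up)$ follows since the spanning set $\{\tu_A^+ 1_\la \tu_B^-\}$ is a $\mbq(\up)$-basis of $\ddbfHa$ by the triangular decomposition \eqref{tri deco of dbfHa} of $\dbfHa$ and the definition of $\ddbfHa$ as $\bigoplus_{\la,\mu}{}_\la\dbfHa_\mu$.
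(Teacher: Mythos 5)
Your proposal is correct and follows essentially the same route as the paper: expand $\tu_{B}^-\tu_{A}^+$ via \eqref{tuB times tuA}, collapse the torus factor $\ti K_1^{j_1}\cdots\ti K_{n-1}^{j_{n-1}}$ against the adjacent idempotent to a scalar, recognize the resulting coefficient as $f_{A,B,C,\mu}$ for the inner weight $\mu$, invoke Lemma \ref{key} for integrality, and absorb the remaining $\tu^+\tu^+$ and $\tu^-\tu^-$ products into $\dHap$ and $\dHam$ respectively. The one small slip is in your bookkeeping remark: the exponent $(\mu_1-\mu_2)j_1+\cdots+(\mu_{n-1}-\mu_n)j_{n-1}$ genuinely depends on the inner weight $\mu$ (not ``only on $\la$''); there is no normalization issue to verify because Lemma \ref{key} applies uniformly for every choice of that weight, so the coefficient lies in $\sZ$ regardless.
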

\begin{proof}
By \eqref{tuB times tuA} and \ref{key} we conclude that
\begin{equation*}
\begin{split}
(\tu_{A_2}^+1_\la\tu_{B_2}^-)(\tu_{A_1}^+1_\mu\tu_{B_1}^-)
&=\sum_{C\in\afThnpm\atop\bfj\in\afmbzn,\,j_n=0}
g_{A_1,B_2,C,\bfj}
(\tu_{A_2}^+1_\la\tu_{C^+}^+)(\tu_{\tr(C^-)}^-
\ti K_1^{j_1}\cdots\ti K_{n-1}^{j_{n-1}}1_\mu\tu_{B_1}^- )\\&=\sum_{C\in\afThnpm}f_{A_1,B_2,C,\mu}
(\tu_{A_2}^+1_\la\tu_{C^+}^+)(\tu_{\tr(C^-)}^-1_\mu\tu_{B_1}^- )\in\ddHa.
\end{split}
\end{equation*}
for $A_1,A_2,B_1,B_2\in\afThnp$ and $\la,\mu\in\afmbzn$. This proves the assertion.
\end{proof}

\begin{Coro}\label{integral quantum affine Schur Weyl duality}
By restriction, the map $\dzr$ defined in \ref{dzr} induces a surjective algebra homomorphism $\dzr:\ddHa\ra\afSr$.
\end{Coro}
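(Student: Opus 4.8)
The plan is to verify two things: first that $\dzr$ maps $\ddHa$ into $\afSr$, and second that the resulting map $\ddHa\ra\afSr$ is still surjective. For the first point, recall from \eqref{ddHa} that $\ddHa$ is spanned over $\sZ$ by the elements $\tu_A^+1_\la\tu_B^-$ with $A,B\in\afThnp$ and $\la\in\afmbzn$, so it suffices to check that each such element is sent into $\afSr$. Applying \ref{dzr} together with \ref{prop of dzr} (or directly the defining formula $\dzr(\pi_{\la,\la}(u))=[\diag(\la)]\zr(u)[\diag(\la)]$) and \ref{commuting formula zr(t)[diag(la)]}, I would compute
$$\dzr(\tu_A^+1_\la\tu_B^-)=\zr(\tu_A^+)[\diag(\la)]\zr(\tu_B^-)=A(\bfl,r)[\diag(\la)](\tB)(\bfl,r)$$
where $r=\sg(\la)$ if $\la\in\afmbnn$ (and one reduces to this case by noting $1_\la=1_{\la+m\mathbf 1}$ after a suitable shift, or rather that $\dzr(\tu_A^+1_\la\tu_B^-)$ lands in $\afbfSr$ with $r=\sg(\la)$ and is zero unless $\la\in\afLanr$ forces the relevant diagonal terms to be lattice points). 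Using \eqref{commuting formula A(bfl,r)[diag(la)]} this equals $[A+\diag(\la-\co(A))]\,(\tB)(\bfl,r)$, which by \eqref{[diag(la)][A]} is, up to a further diagonal shift, of the form $A^+(\bfl,r)[\diag(\nu)]A^-(\bfl,r)$ for appropriate $A^+=A$, $A^-=\tB$ and $\nu\in\afLanr$; by \ref{integral basis for affine Schur algebras} this is a $\sZ$-combination of the displayed $\sZ$-basis of $\afSr$, hence lies in $\afSr$. Thus $\dzr(\ddHa)\sse\afSr$.

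For surjectivity, I would use precisely \ref{integral basis for affine Schur algebras}: it exhibits an explicit $\sZ$-basis of $\afSr$ consisting of the elements $A^+(\bfl,r)[\diag(\la)]A^-(\bfl,r)$ with $A\in\afThnpm$, $\la\in\afLanr$, $\la_i\ge\sg_i(A)$. It therefore suffices to hit each of these basis elements. Given such a triple $(A^+,\la,A^-)$, write $A^-=\tr(B)$ for a unique $B\in\afThnp$, and set $C=A^+\in\afThnp$. Then $\zr(\tu_C^+)=C(\bfl,r)=A^+(\bfl,r)$ and $\zr(\tu_B^-)=(\tB)(\bfl,r)=A^-(\bfl,r)$ by \ref{zr}. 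The condition $\la_i\ge\sg_i(A)$ is exactly what guarantees that the diagonal pieces work out, i.e. that $[\diag(\la)]C(\bfl,r)=C(\bfl,r)[\diag(\mu)]$ for a suitable $\mu\in\afLanr$ via \eqref{commuting formula A(bfl,r)[diag(la)]}, so that
$$\dzr\bigl(\tu_{A^+}^+\,1_{\nu}\,\tu_{B}^-\bigr)=A^+(\bfl,r)[\diag(\la)]A^-(\bfl,r)$$
for an appropriate choice of $\nu$ (namely $\nu=\la-\co(A^+)$, or the shifted value making the middle diagonal match). Since $\tu_{A^+}^+1_\nu\tu_B^-\in\ddHa$ by \eqref{ddHa}, every basis element of $\afSr$ is in the image, so $\dzr:\ddHa\ra\afSr$ is surjective.

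The routine-but-slightly-fiddly part, and the one I would be most careful about, is the bookkeeping of the diagonal shifts: matching the parameter $\la$ appearing in $1_\la\in\ddHa$ against the parameter of $[\diag(\cdot)]$ appearing in the $\sZ$-basis element of \ref{integral basis for affine Schur algebras}, and checking that the constraint $\la_i\ge\sg_i(A)$ in that corollary corresponds exactly to the existence of a valid $\nu\in\afmbzn$ (equivalently that $A+\diag(\mu-\co(A))$ has nonnegative diagonal, so it genuinely lies in $\afThnr$ rather than being the zero convention $[A]=0$). Once \eqref{commuting formula A(bfl,r)[diag(la)]} and \eqref{deg(uA^+)} are invoked this is purely a matter of tracking $\ro$, $\co$, and $\sg_i$, with no new idea required; the conceptual content is entirely carried by \ref{zr}, \ref{dzr}, \ref{interal form}, and \ref{integral basis for affine Schur algebras}.
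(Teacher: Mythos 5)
Your proof is correct and follows essentially the same route as the paper: compute $\dzr(\tu_A^+1_\la\tu_B^-)=A(\bfl,r)[\diag(\la)](\tB)(\bfl,r)$ via Lemma \ref{prop of dzr} (or equivalently \ref{dzr} plus \ref{commuting formula zr(t)[diag(la)]}), observe this lies in $\afSr$, and deduce surjectivity from the $\sZ$-basis of Corollary \ref{integral basis for affine Schur algebras}, with Theorem \ref{interal form} supplying that $\ddHa$ is a $\sZ$-subalgebra so that the restriction is an algebra map. Two small points. First, the parenthetical ``$1_\la=1_{\la+m\mathbf{1}}$ after a suitable shift'' is false --- these are distinct orthogonal idempotents in $\ddbfHa$ --- but your ``or rather'' alternative (that $[\diag(\la)]=0$ for $\la\notin\afLanr$) is the correct observation and is all that is needed. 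Second, for the inclusion $\dzr(\ddHa)\subseteq\afSr$ the detour through \eqref{commuting formula A(bfl,r)[diag(la)]} and a further appeal to \ref{integral basis for affine Schur algebras} is unnecessary and slightly circular: $A(\bfl,r)[\diag(\la)](\tB)(\bfl,r)$ is already literally of the form $A^+(\bfl,r)[\diag(\la)]A^-(\bfl,r)$ with $A^+=A$, $A^-=\tB$, and \ref{integral basis for affine Schur algebras} only records a basis subject to the constraint $\la_i\ge\sg_i(A)$, so it does not directly say that arbitrary such products lie in $\afSr$. The clean justification is simply that $A(\bfl,r)$, $[\diag(\la)]$, and $(\tB)(\bfl,r)$ are each finite $\sZ$-linear combinations of the normalized basis elements $[X]$ with $X\in\afThnr$, hence lie in $\afSr$, and $\afSr$ is a $\sZ$-subalgebra of $\afbfSr$.
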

\begin{proof}
Applying \ref{prop of dzr} yields
$$\dzr(\ddHa)=\spann_{\sZ}\{A^+(\bfl,r)[\diag(\la)]A^-(\bfl,r)\mid
A\in\afThnpm,\,\la\in\afLanr\}.$$
Combining this with \ref{integral basis for affine Schur algebras} and \ref{interal form} proves the assertion.
\end{proof}

We end this section by studying the relation between $\dHa$ and $\ddHa$. We define the completion algebra $\hddbfHa$ of $\ddbfHa$ as follows.
Let $\hddbfHa$ be
the vector space of all formal (possibly infinite) $\mbq(\up)$-linear combinations
$\sum_{A,B\in\afThnp,\,\la\in\afmbzn}
\beta_{A,B,\la}u_A^+1_\la u_B^-$ satisfying
\begin{itemize}
\item[(F):\,\,]
for any $\mu\in\mathbb Z^n$, the sets
$\{(A,B,\la)\mid A,B\in\afThnp,\,\la\in\afmbzn,\,
\bt_{A,B,\la}\not=0,\,\la-\deg(u_B^-)=\mu\}$ and
$\{(A,B,\la)\mid A,B\in\afThnp,\,\la\in\afmbzn,\,
\bt_{A,B,\la}\not=0,\,\la+\deg(u_A^+)=\mu\}$ are finite.
\end{itemize}
We define the product on $\hddbfHa$ by
$$\sum_{A,B,\la}
\beta_{A,B,\la}u_{A}^+1_\la u_{B}^-\sum_{A',B',\la'}
\ga_{A',B',\la'}u_{A'}^+1_{\la'} u_{B'}^-=\sum_{A,B,\la\atop
A',B',\la'}
\beta_{A,B,\la}\ga_{A',B',\la'}(u_A^+1_\la u_B^-)(u_{A'}^+1_{\la'} u_{B'}^-)$$
where $(u_A^+1_\la u_B^-)(u_{A'}^+1_{\la'} u_{B'}^-)$ is the
product in $\ddbfHa$. Since $(u_A^+1_\la u_B^-)(u_{A'}^+1_{\la'} u_{B'}^-)$ is a linear combination of elements $u_{X}^+1_\mu u_Y^-$ such that $\la+\deg(u_A^+)=\mu+\deg(u_X^+)$ and $\la'-\deg(u_{B'}^-)=\mu-\deg(u_Y^-)$, the right hand side of the above equation is a well defined elements in $\ddbfHa$. In this way,
$\hddbfHa$ becomes an associative algebra.
The element $\sum_{\la\in\afmbzn}1_\la$ is the unit element of $\hddbfHa$.

Similarly, we may define the completion algebra $\hddHa$ of $\ddHa$. Then by definition, $\hddHa$ is the $\sZ$-submodule of $\hddbfHa$ consisting of those elements $\sum_{A,B\in\afThnp,\,\la\in\afmbzn}
\beta_{A,B,\la}u_A^+1_\la u_B^-$ in $\hddbfHa$  with $\bt_{A,B,\la}\in\sZ$ for all $A,B,\la$.

\begin{Prop}\label{Phi}
The map $\Phi:\dbfHa\ra\hddbfHa$ defined by sending $u$ to $\sum_{\la\in\afmbzn}u1_\la$ for $u\in\dbfHa$ is an injective algebra homomorphism.
\end{Prop}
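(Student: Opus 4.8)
The plan is to verify three things: that $\Phi(u)$ genuinely lies in $\hddbfHa$, that $\Phi$ is a unital algebra homomorphism, and that $\Phi$ is injective. The main tools are the triangular decomposition \eqref{tri deco of dbfHa}, which gives the $\mbq(\up)$-basis $\{u_A^+\Kbfj u_B^-\mid A,B\in\afThnp,\ \bfj\in\afmbzn\}$ of $\dbfHa$ and, after applying the projections $\pi_{\la,\mu}$, the basis $\{u_A^+1_\la u_B^-\}$ of $\ddbfHa$ (by the same argument that yields the $\sZ$-basis of $\ddHa$); the bimodule identity \eqref{bimodule}; and the commutation rule $\Kbfj t=\up^{\bfj\cdot\la}t\Kbfj$ for $t\in\dbfHa_\la$ from \ref{commuting formula K^bfj t}, combined with the defining relations of ${}_\la\dbfHa_\mu$ and with \ref{Lem for def of ddbfHa}.

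First I would check well-definedness. Write $u=\sum_\nu u_\nu$ with $u_\nu\in\dbfHa_\nu$, a finite sum. By \eqref{bimodule}, $u_\nu1_\la=\pi_{\la+\nu,\la}(u_\nu)$; expanding $u_\nu=\sum_{A,B,\bfj}c^{(\nu)}_{A,B,\bfj}u_A^+\Kbfj u_B^-$ with $\deg u_A^++\deg u_B^-=\nu$ and pushing each $\Kbfj$ to the right via the commutation rule and \ref{Lem for def of ddbfHa}, one gets, writing $\nu_{A,B}:=\deg u_A^++\deg u_B^-$,
\[
u1_\la=\sum_{A,B}\Big(\sum_{\bfj}c^{(\nu_{A,B})}_{A,B,\bfj}\,\up^{\,\bfj\cdot(\la+\deg u_B^-)}\Big)\,u_A^+\,1_{\la+\deg u_B^-}\,u_B^-,
\]
a finite combination of basis vectors of $\ddbfHa$. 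Since the middle index $\la+\deg u_B^-$ recovers $\la$ once $B$ is fixed, each basis vector $u_A^+1_{\la'}u_B^-$ receives a contribution from exactly one $\la$ in $\Phi(u)=\sum_\la u1_\la$, so $\Phi(u)$ is an honest formal sum, with coefficient $\sum_{\bfj}c^{(\nu_{A,B})}_{A,B,\bfj}\up^{\bfj\cdot\la'}$ at $u_A^+1_{\la'}u_B^-$. Condition (F) then follows at once: for a fixed $\mu$, the equation $\la'-\deg u_B^-=\mu$ (resp. $\la'+\deg u_A^+=\mu$) determines $\la'$ from $B$ (resp. from $A$), and only the finitely many $A,B$ occurring in the $u_\nu$ matter; hence $\Phi(u)\in\hddbfHa$.

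Next I would prove multiplicativity. Using the product in $\hddbfHa$, $\Phi(u)\Phi(v)=\sum_{\la,\mu}(u1_\la)(v1_\mu)$, with the inner products formed in $\ddbfHa$. Since $\dbfHa$ is $\afmbzn$-graded, the product rule on $\ddbfHa$ gives $(u1_\la)(v1_\mu)=\sum_\nu\pi_{\la+\nu,\mu}(u_\nu v_{\la-\mu})$, where $v_{\la-\mu}$ is the degree-$(\la-\mu)$ homogeneous component of $v$ (zero if there is none). Reindexing by $\mu$ and $\nu'=\la-\mu$,
\[
\Phi(u)\Phi(v)=\sum_{\mu}\sum_{\nu,\nu'}\pi_{\mu+\nu+\nu',\,\mu}(u_\nu v_{\nu'}),
\]
and this equals $\sum_\mu(uv)1_\mu=\Phi(uv)$ once one expands $(uv)_\eta=\sum_{\nu+\nu'=\eta}u_\nu v_{\nu'}$. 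Along the way, using condition (F) for each of $\Phi(u),\Phi(v)$, I would verify that every basis slot of $\hddbfHa$ is fed by only finitely many pairs $(\la,\mu)$, so that all the rearrangements are legitimate. As $\Phi(1)=\sum_\la 1_\la$ is the unit of $\hddbfHa$, this shows $\Phi$ is a unital algebra homomorphism.

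Finally, injectivity: if $\Phi(u)=0$, the coefficient formula above forces $\sum_{\bfj}c^{(\nu_{A,B})}_{A,B,\bfj}\up^{\bfj\cdot\la'}=0$ for all $\la'\in\afmbzn$ and all $A,B$; choosing $\la'$ so that $\bfj\mapsto\bfj\cdot\la'$ is injective on the finite set of $\bfj$ that occur makes the exponents $\bfj\cdot\la'$ pairwise distinct, so that (as $\up$ is an indeterminate) $c^{(\nu_{A,B})}_{A,B,\bfj}=0$ for all $A,B,\bfj$, i.e. $u=0$. I expect the main obstacle to be organizational rather than conceptual: one must track the index shifts coming from \eqref{bimodule} carefully and check at each stage --- most delicately in forming $\Phi(u)\Phi(v)$ --- that the formal sums produced really satisfy condition (F) and hence belong to $\hddbfHa$.
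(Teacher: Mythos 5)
Your outline of well-definedness and multiplicativity is correct and essentially matches the paper's: both use the bimodule structure \eqref{bimodule} together with the orthogonality $1_\alpha 1_\beta=\delta_{\alpha\beta}1_\beta$, and both reduce to homogeneous components; your explicit check of condition (F) is a reasonable addition that the paper leaves implicit.

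The injectivity step, however, has a genuine gap. After reducing to the system $\sum_{\bfj}c_{A,B,\bfj}\,\up^{\bfj\cdot\la'}=0$ for all $\la'\in\afmbzn$, you pick \emph{one} $\la'$ making $\bfj\mapsto\bfj\cdot\la'$ injective and conclude from the distinctness of the exponents that the coefficients vanish ``as $\up$ is an indeterminate.'' That inference is valid only when the coefficients lie in a field not containing $\up$; here $c_{A,B,\bfj}\in\mbq(\up)$, so distinct powers of $\up$ are most certainly \emph{not} linearly independent over the coefficient field --- e.g.\ $\up\cdot\up^{1}+(-1)\cdot\up^{2}=0$ with nonzero rational-function coefficients. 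A single equation therefore proves nothing. You need to use infinitely many $\la'$: the paper invokes the nonvanishing of the generalized Vandermonde determinant $\det\bigl(\up^{\mu\cdot\bfj}\bigr)_{\mu,\bfj\in\mbz^n_{[a,b]}}$ (from the proof of \cite[5.2]{DFW05}), which makes the square linear system $\sum_{\bfj}c_{A,B,\bfj}\up^{\mu\cdot\bfj}=0$, $\mu\in\mbz^n_{[a,b]}$, nonsingular over $\mbq(\up)$ and so forces every $c_{A,B,\bfj}=0$. (Alternatively, one can salvage your choice of a single $\la'_0$ by also using its integer multiples $k\la'_0$ and a one-variable Vandermonde argument in the distinct nonzero elements $\up^{\bfj\cdot\la'_0}$, but some such linear-independence input is indispensable and is missing as written.)
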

\begin{proof}
Let $u\in\dbfHa_\la$, $w\in\dbfHa_\mu$, where $\la,\mu\in\mbz\afPin$. Since $1_\al w=w1_{\al-\mu}$ for $\al\in\afmbzn$, we have
$$\Phi(u)\Phi(w)=\sum_{\al,\bt\in\afmbzn}(u1_\al) (w1_\bt)=\sum_{\al,\bt\in\afmbzn}uw(1_{\al-\mu}1_\bt)
=\sum_{\bt\in\afmbzn}uw 1_{\bt}=\Phi(uw).$$
Thus $\Phi$ is an algebra homomorphism.

Now let us prove that $\Phi$ is injective. Assume $x=\sum_{A,B\in\afThnp,\,\bfj\in\afmbzn}\bt_{A,B,\bfj}u_A^+\Kbfj u_B^-\in\ker(\Phi)$, where $\bt_{A,B,\bfj}\in\mbq(\up)$. Then
we have
\begin{equation*}
\begin{split}
\Phi(x)&=\sum_{A,B\in\afThnp\atop\bfj,\la\in\afmbzn}\bt_{A,B,\bfj}u_A^+\Kbfj 1_{\la+\deg(u_B^-)}u_B^-\\
&=\sum_{A,B\in\afThnp\atop\la\in\afmbzn}\bigg(
\sum_{\bfj\in\afmbzn}
\bt_{A,B,\bfj}\up^{(\la+\deg(u_B^-))\cdot\bfj}\bigg)u_A^+ 1_{\la+\deg(u_B^-)}u_B^-.
\end{split}
\end{equation*}
This implies that $\sum_{\bfj\in\afmbzn}
\bt_{A,B,\bfj}\up^{(\la+\deg(u_B^-))\cdot\bfj}=0$ for $A,B\in\afThnp$ and $\la\in\afmbzn$. By the proof of \cite[5.2]{DFW05} we see that
$\det(\up^{\mu\cdot\bfj})_{\mu,\bfj\in\mbz_{[a,b]}^n}\not=0$  for   $a<b$, where
$\mbz_{[a,b]}^n=\{\bfx\in\mbz^n\mid a\leq x_i\leq b,\,\text{for},\,1\leq i\leq n\}$. It follows that $\bt_{A,B,\bfj}=0$ for all $A,B,\bfj$ and hence $x=0$. The proof is completed.
\end{proof}
\begin{Rem}\label{Rem}
With the above proposition we may regard  $\dbfHa$ as a subalgebra of $\hddbfHa$. Clearly, we have
\begin{equation}\label{relation between dHa and ddHa}
\dHa\han\hddHa\cap\dbfHa.
\end{equation}
Note that
$\dHa\not=\hddHa\cap\dbfHa.$
For example we have $\frac{K_1K_2}{\up-1}-
\frac{K_1}{\up-1}\not\in\dHa$. But $\frac{K_1K_2}{\up-1}-
\frac{K_1}{\up-1}=\sum_{\la\in\afmbzn}\up^{\la_1}\frac{\up^{\la_2}-1}
{\up-1}1_\la\in \hddHa\cap\dbfHa$. Although
$\dHa\not=\hddHa\cap\dbfHa$, we will prove in \ref{v=1} that the
proper inclusion \eqref{relation between dHa and ddHa} becomes an equality in the classical case.
\end{Rem}


\section{The commutator formulas for $\dbfHa$}
By \cite[1.4.3]{DDF}, $\dbfHa$ is generated by $u^+_{\afE_{i,j}}$, $u^-_{\afE_{i,j}}$ and $K_i$, for $i,j\in\mbz$ and $i<j$. Note
that $M(\afE_{i,j})$ is the indecomposable representation of $\tri$ for all $i<j$.
We derive commutator formulas between indecomposable generators of $\dbfHa$, which will be used in \S6.

For  $A=(a_{i,j})\in \afThnp$, there is a polynomial
$\fka_A=\fka_A(\up^2)\in\sZ$ in $\up^2$ such that, for each finite
field $\field$ with $q$ elements,
$\fka_A|_{\up^2=q}=|\Aut(M_{\field}(A))|$ (see \cite[Cor.~2.1.1]{Peng}). Furthermore, for $A=(a_{i,j})\in \afThnp$, set
$$\fkd(A)=\sum_{i<j,1\leq i\leq n}a_{i,j}(j-i).$$
Then $\dim_\field M(A)=\fkd(A)$ for each finite field $\field$.
For $A,B\in\afThnp$, let
\begin{equation}\label{def of LR}
\begin{split}
L_{A,B}&=\up^{\lan \bfd(B),\bfd(B)\ran}\sum_{A_1,B_1}\vi_{A,B}^{A_1,B_1}
\up^{\lan \bfd(B_1),\bfd(A)+\bfd(B)-\bfd(B_1)\ran}\ti K\su{\bfd(B)-\bfd(B_1)}u_{B_1}^-u_{A_1}^+,\\
R_{A,B}&=
\up^{\lan\bfd(B),\bfd(A)\ran}\sum_{A_1,B_1}
 \ti{\vi_{A,B}^{A_1,B_1}}\up^{\lan \bfd(B)-\bfd(B_1),\bfd(A_1)\ran+\lan \bfd(B),\bfd(B_1)\ran}
 \ti K\su{\bfd(B_1)-\bfd(B)}u_{A_1}^+u_{B_1}^-
\end{split}
\end{equation}
where
\begin{equation*}
\begin{split}
\vi_{A,B}^{A_1,B_1}&=\frac{\fka_{A_1}\fka_{B_1}}{\fka_A\fka_B}\sum_{A_2\in\afThnp}
\up^{2\fkd(A_2)}\fka_{A_2}\vi_{A_1,A_2}^{A}\vi_{B_1,A_2}^B,\\
\ti{\vi_{A,B}^{A_1,B_1}}&=\frac{\fka_{A_1}\fka_{B_1}}{\fka_A\fka_B}
\sum_{A_2\in\afThnp}\up^{2\fkd(A_2)}\fka_{A_2}\vi_{A_2,A_1}^{A}\vi_{A_2,B_1}^B.\\
\end{split}
\end{equation*}
By \cite[2.4.4]{DDF} we have the following result.
\begin{Lem}\label{L=R}
For all $A,B\in\afThnp$,
$L_{A,B}=R_{A,B}$.
\end{Lem}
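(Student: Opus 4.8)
Lemma \ref{L=R} asserts that the two "commutator-style" expressions $L_{A,B}$ and $R_{A,B}$ built from $u_A^+$, $u_B^-$ and $\tilde K$-terms coincide for all $A,B\in\afThnp$. The statement is cited to \cite[2.4.4]{DDF}, so the plan is essentially to reduce it to the commutator relation already recorded in \ref{presentation-dbfHa}(5), which handles the semisimple case $A=A_\la$, $B=A_\mu$, and then propagate to arbitrary $A,B$ by a Hall-algebra multiplication argument.

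The plan is as follows. First I would observe that both $L_{A,B}$ and $R_{A,B}$ live in $\dbfHam\,\dbfHaz\,\dbfHap$ and, by the triangular decomposition \eqref{tri deco of dbfHa}, it suffices to show that the difference $L_{A,B}-R_{A,B}$ maps to zero under the identification, i.e. that the two sides agree after commuting $u_A^+$ past $u_B^-$ in $\dbfHa$. Concretely, one shows that $u_B^-u_A^+$ has a unique expansion of the form $\sum g_{A,B,C,\mathbf j}\,u_{C^+}^+ u_{\tr(C^-)}^-\tilde K^{\mathbf j}$ (this is exactly the shape appearing in \eqref{tuB times tuA}), and that both $L_{A,B}$ and $R_{A,B}$, when moved to the normal-ordered form $(\text{minus part})(\text{Cartan part})(\text{plus part})$, compute this same expansion. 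The coefficients $\vi_{A,B}^{A_1,B_1}$ and $\ti\vi_{A,B}^{A_1,B_1}$ are precisely the structure constants coming from iterated comultiplication in the Ringel–Hall algebra, via the Green/Xiao formulas for the coproduct; the sums over $A_2$ with weights $\up^{2\fkd(A_2)}\fka_{A_2}$ encode the Hopf-pairing contraction that produces the $\tilde K$-factors.

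The key reduction is the following. Since $\dbfHap$ is generated by the divided-power–type elements attached to the $A_\la$ (semisimple modules) together with the Hall multiplication \ref{presentation-dbfHa}(3), and similarly for $\dbfHam$, it is enough to verify $L_{A,B}=R_{A,B}$ when $A$ and $B$ are semisimple, and then show that the identity is stable under multiplying $A$ (resp. $B$) on either side by another matrix. For semisimple $A=A_\la$, $B=A_\mu$ the asserted identity $L_{A_\la,A_\mu}=R_{A_\la,A_\mu}$ is, after unwinding the definitions of $\fka$, $\fkd$, $\vi_{\la,\mu}^{\al,\bt}$ in \ref{presentation-dbfHa}, exactly relation \ref{presentation-dbfHa}(5). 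For the inductive step one uses associativity in $\dbfHa$: writing $u_A^+ = \up^{?}\,u_{A'}^+ u_{A''}^+ - (\text{lower terms})$ from \ref{presentation-dbfHa}(3), one commutes each factor past $u_B^-$ using the already-established identity, reassociates, and checks that the Hall-number identities $\sum_{C}\vi_{A',A''}^{C}\vi_{C,A_2}^{A}=\sum_{C'}\vi_{A',C'}^{?}\vi_{A'',A_2}^{?}$ (coassociativity / the Green formula) make the two resulting expressions for $L_{A,B}$ and $R_{A,B}$ coincide term by term.

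The main obstacle is precisely this bookkeeping: tracking the powers of $\up$ coming from the Euler form $\lan-,-\ran$ and the twists $\up^{2\fkd(A_2)}\fka_{A_2}$ through each reassociation, and recognizing the relevant combinatorial identity among Hall polynomials as an instance of coassociativity of the Hall coproduct. Since \cite[2.4.4]{DDF} already carries out this computation, in the write-up I would simply cite it; were I to prove it from scratch, I would set up the Hopf-algebra/Drinfeld-double formalism of \cite{X97} so that $L_{A,B}$ and $R_{A,B}$ become the two sides of the defining relation of the double (the compatibility of multiplication with the skew-Hopf pairing), at which point the identity is structural rather than computational.
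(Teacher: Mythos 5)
Your bottom line matches the paper exactly: the paper's entire ``proof'' of Lemma~\ref{L=R} is the single sentence ``By \cite[2.4.4]{DDF} we have the following result,'' i.e.\ a citation, and you also propose to cite \cite[2.4.4]{DDF}. So as a write-up you and the paper coincide.

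However, the fallback argument you sketch contains a genuine error that would block the from-scratch proof. You propose to reduce to the semisimple case on the grounds that ``$\dbfHap$ is generated by the divided-power--type elements attached to the $A_\la$.'' For the cyclic quiver this is false: the subalgebra generated by the $u^+_{A_\la}$ (equivalently, by the simples $u^+_{S_i}$) is the \emph{composition} subalgebra, and the full Ringel--Hall algebra $\dbfHap$ is strictly larger (it contains, in addition, the central elements associated to the homogeneous tube). This is precisely why the paper emphasizes, just after \ref{presentation-dbfHa}, that $\afbfU$---the subalgebra generated by the $u^\pm_{\afE_{i,i\pm1}}$ and $K_i^{\pm1}$---\emph{is a proper subalgebra} of $\dbfHa$. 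The actual generating set the paper uses (at the start of \S5, citing \cite[1.4.3]{DDF}) consists of the \emph{indecomposables} $u^\pm_{\afE_{i,j}}$ for all $i<j$, not the semisimples; and even with those generators the straightforward ``commute factor by factor and reassociate'' induction is not obviously well-founded, since one uses Lemma \ref{L=R} itself as input to the commutation formulas of \ref{commuting formula between elements associated with indecomposable modules}, not the other way around.

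Your final paragraph is the correct conceptual picture: $L_{A,B}$ and $R_{A,B}$ are exactly the two sides of the cross-relation of the (reduced) Drinfeld double of the Hall algebra with respect to Green's Hopf pairing, and the coefficients $\vi_{A,B}^{A_1,B_1}$, $\ti\vi_{A,B}^{A_1,B_1}$ encode the coproduct/pairing contractions. In the Drinfeld-double formalism of \cite{X97} (as applied in \cite[\S2.4]{DDF}), the identity is built into the definition of the double and holds for all $A,B\in\afThnp$ at once, with no reduction to a small generating set needed. If you wanted a self-contained proof, that is the route to commit to; drop the semisimple-generation step.
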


For $s<t$ we let $$m_{s,t}=\bigg|\bigg\{c\in\mbz\mid 0\leq c\leq \frac{t-s-1}{n} \bigg\}\bigg|-1=\bigg[\frac{t-s-1}{n}\bigg],$$
and set $m_{s,s}=0$.
For $i\in\mbz$, let $\bar i$ denote the integer modulo $n$.

\begin{Lem}\label{vi A,B A1,B1}
Let $A=\afE_{i,j}$, $B=\afE_{k,l}$ with $i<j$ and $k<l$. Assume $A_1,B_1\in\afThnp$.

(1) If  $\vi_{A,B}^{A_1,B_1}\not=0$, then one of the following holds.
\begin{itemize}
\item[(i)] If $A_1=A$ and $B_1=B$, then $\vi_{A,B}^{A_1,B_1}=1$.
\item[(ii)]
If either $A_1\not=A$ or $B_1\not=B$, then $\bar j=\bar l$ and
\begin{equation*}
\vi_{A,B}^{A_1,B_1}=
\begin{cases}
(\up^2-1)\up^{2a_s}&\text{if $A_1=\afE_{i,s}$ and $B_1=\afE_{k,s-j+l}$ for $\max\{i,k-l+j\}<s<j$,}\\
(\up^2-1)^{-1}\up^{2a_i}&\text{if $k-l+j=i$ and $A_1=B_1=0$,}\\
\up^{2a_i}&\text{if $k-l+j<i$, $A_1=0$ and $B_1=\afE_{k,i-j+l}$,}\\
\up^{2a_{k-l+j}}&\text{if $k-l+j>i$, $A_1=\afE_{i,k-l+j}$ and $B_1=0$,}\\
\end{cases}
\end{equation*}
where $a_s=m_{i,s}+m_{k-l+j,s}-m_{i,j}-m_{k,l}+m_{s,j}+j-s,$ for $\max\{i,k-l+j\}\leq s\leq j$.
\end{itemize}

(2) If  $\ti{\vi_{A,B}^{A_1,B_1}}\not=0$, then one of the following holds.
\begin{itemize}
\item[(i)]
If $A_1=A$ and $B_1=B$, then $\ti{\vi_{A,B}^{A_1,B_1}}=1$.
\item[(ii)]
If either $A_1\not=A$ or $B_1\not=B$, then $\bar i=\bar k$ and
\begin{equation*}
\ti{\vi_{A,B}^{A_1,B_1}}=
\begin{cases}
(\up^2-1)\up^{2b_s}&\text{if $A_1=\afE_{s,j}$ and $B_1=\afE_{s,l-k+i}$ for $i<s<\min\{j,l-k+i\}$,}\\
(\up^2-1)^{-1}\up^{2b_j}&\text{if $l-k+i=j$ and $A_1=B_1=0$,}\\
\up^{2b_j}&\text{if $l-k+i>j$, $A_1=0$ and $B_1=\afE_{j,l-k+i}$,}\\
\up^{2b_{l-k+i}}&\text{if $l-k+i<j$, $A_1=\afE_{l-k+i,j}$ and $B_1=0$,}\\
\end{cases}
\end{equation*}
where $b_s=m_{s,j}+m_{s+k-i,l}-m_{i,j}-m_{k,l}+m_{i,s}+s-i,$ for $i\leq s\leq\min\{j,l-k+i\}$.
\end{itemize}
\end{Lem}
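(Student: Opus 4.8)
The plan is to compute the coefficients $\vi_{A,B}^{A_1,B_1}$ and $\ti{\vi_{A,B}^{A_1,B_1}}$ directly from their definitions, namely
\[
\vi_{A,B}^{A_1,B_1}=\frac{\fka_{A_1}\fka_{B_1}}{\fka_A\fka_B}\sum_{A_2\in\afThnp}
\up^{2\fkd(A_2)}\fka_{A_2}\vi_{A_1,A_2}^{A}\vi_{B_1,A_2}^B,
\]
so everything is driven by the Hall polynomials $\vi_{A_1,A_2}^A$ for the cyclic quiver. First I would observe that since $A=\afE_{i,j}$ is indecomposable, the submodule structure of $M(A)$ is a chain; hence for $\vi_{A_1,A_2}^A\neq0$ the modules $M(A_1)$ (quotient) and $M(A_2)$ (sub) must be ``aligned'' submodules/quotients of the uniserial module $M^{i,j}$. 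In particular $M(A_2)\cong M^{s,j}$ for some $i\le s\le j$ (the unique submodule of the required dimension), and then $M(A_1)\cong M^{i,s}$, so $\vi_{A_1,A_2}^A\in\{0,1\}$. The same analysis applied to $B=\afE_{k,l}$ forces $M(A_2)$ to simultaneously be (isomorphic to) a submodule of $M^{i,j}$ and of $M^{k,l}$, which pins down its top to $S_{\bar j}=S_{\bar l}$ and forces $\bar j=\bar l$ unless $A_2=0$. Thus the sum over $A_2$ collapses to at most one nonzero term $A_2=\afE_{s,j}$, giving the stated case division.

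The second step is the bookkeeping of exponents. Once $A_2=\afE_{s,j}$ is fixed, $A_1=\afE_{i,s}$ and $B_1$ is the unique module with $M(B_1)\cong M^{k,l}/M^{s,j}$, which (tracing dimension vectors) is $\afE_{k,s-j+l}$ when $s>k-l+j$, is $0$ when $s=k-l+j$, and otherwise there is no valid $s$. The aut\-omorphism polynomials $\fka_{\afE_{p,q}}$ for an indecomposable are explicitly $(\up^2-1)\up^{2m_{p,q}}$ (the endomorphism ring of a uniserial cyclic-quiver module of that length), while $\fka_0=1$; feeding these into the ratio $\fka_{A_1}\fka_{B_1}/(\fka_A\fka_B)$ together with $\up^{2\fkd(A_2)}\fka_{A_2}$ produces the power $(\up^2-1)^{\pm1}\up^{2a_s}$ with $a_s$ exactly the combination $m_{i,s}+m_{k-l+j,s}-m_{i,j}-m_{k,l}+m_{s,j}+j-s$: the $+j-s$ comes from $\fkd(\afE_{s,j})=j-s$ and the $m$-terms from the $\fka$'s, and the $(\up^2-1)^{\pm1}$ factor toggles according to how many of $A_1,B_1$ are zero (each zero module removes one $(\up^2-1)$ from the numerator). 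Part (2) is entirely parallel, replacing ``submodule/quotient'' by ``quotient/submodule'': here $A_2=\afE_{i,s}$ is a quotient of both $M(A)$ and $M(B)$, forcing $\bar i=\bar k$, and the analogous computation yields $\ti{\vi_{A,B}^{A_1,B_1}}=(\up^2-1)^{\pm1}\up^{2b_s}$ with $b_s=m_{s,j}+m_{s+k-i,l}-m_{i,j}-m_{k,l}+m_{i,s}+s-i$; one can also deduce (2) from (1) by applying the transpose/duality anti-automorphism of $\dbfHa$ that interchanges $u^+$ and $u^-$ and sends $\afE_{p,q}\mapsto\afE_{q,p}$ in the relevant Hall-algebra identities.

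The main obstacle I anticipate is not the structural part (which is forced by uniseriality) but the precise exponent accounting: one must verify that the normalizations $\fka_{\afE_{p,q}}=(\up^2-1)\up^{2m_{p,q}}$ hold in the cyclic (not just linear) quiver case, track the contributions $\lan\bfd(B_1),\cdots\ran$ etc.\ if one were working with the non-normalized $L_{A,B},R_{A,B}$, and confirm that the ranges $\max\{i,k-l+j\}<s<j$ (resp.\ $i<s<\min\{j,l-k+i\}$) are exactly those for which the intermediate module $M(A_2)$ genuinely embeds (resp.\ surjects) with the right quotient (resp.\ kernel) — boundary values $s=\max\{i,k-l+j\}$ and $s=j$ must be checked separately since they produce the three remaining cases in each list. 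These are all finite, explicit checks on uniserial modules over $\field_q\tri$, so the argument is a careful computation rather than a conceptual difficulty; I would organize it by first settling which $A_2$ contribute (dimension-vector and top/socle constraints), then evaluating the single surviving summand.
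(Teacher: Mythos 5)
Your proposal is correct and follows essentially the same route as the paper: compute $\fka_{\afE_{p,q}}=\up^{2m_{p,q}}(\up^2-1)$ from the cyclic-quiver formula, observe that uniseriality of $M(\afE_{s,t})$ forces $\vi^{\afE_{s,t}}_{A_1,A_2}\in\{0,1\}$ with $(A_1,A_2)=(\afE_{s,x},\afE_{x,t})$, and then substitute into the defining formula for $\vi_{A,B}^{A_1,B_1}$ and $\ti{\vi_{A,B}^{A_1,B_1}}$, which collapses the $A_2$-sum to a single term. One small slip: the common submodule of $M^{i,j}$ and $M^{k,l}$ is pinned down by matching \emph{socles} $S_{\overline{j-1}}=S_{\overline{l-1}}$ (equivalently $\bar j=\bar l$), not ``tops'' -- the top of a submodule $M^{s,j}$ is $S_{\bar s}$ -- but this is a wording issue and your conclusion is unaffected.
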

\begin{proof}
Applying \cite[(1.2.0.9)]{DDF} yields
$\fka_{\afE_{s,t}}=\up^{2m_{s,t}}(\up^2-1)$ for $s<t$. Furthermore
if $A_1,A_2\not=0\in\afThnp$, then
$$\vi^{\afE_{s,t}}_{A_1,A_2}=
\begin{cases}
1 & \text{ $A_1=\afE_{s,x}$ and $A_2=\afE_{x,t}$ for some $s<x<t$},\\
0 & \text{otherwise}.
\end{cases}
$$
since the module $M(\afE_{s,t})$ is uniserial. Now the assertion follows from the definition of $\vi_{A,B}^{A_1,B_1}$.
\end{proof}

For convenience, we let $\bfd(\afE_{i,i})=0$ for any $i\in\mbz$.
Fix $i<j$ and $k<l$. For $\max\{i,k-l+j\}\leq s\leq j$ let
\begin{equation*}
\begin{split}
f_s&=2a_s+\lan\bfd(\afE_{k,l}),\bfd(\afE_{k,l})\ran+
\lan\bfd(\afE_{k,s-j+l}),\bfd(\afE_{i,j})+
\bfd(\afE_{s,j})\ran\\
\ti f_s&=f_s-f_j=2a_s+\lan\bfd(\afE_{k,s-j+l}),\bfd(\afE_{s,j})\ran
-\lan\bfd(\afE_{s-j+l,l}),\bfd(\afE_{i,j})\ran
\end{split}
\end{equation*}
where $a_s$ is as in \ref{vi A,B A1,B1}. Furthermore
for $i\leq s\leq\min\{j,l-k+i\}$ let
\begin{equation*}
\begin{split}
g_s&=2b_s+\lan\bfd(\afE_{k,l}),\bfd(\afE_{i,j})\ran+\lan\bfd(\afE_{i,s}),
\bfd(\afE_{s,j})\ran+\lan\bfd(\afE_{k,l},\bfd(\afE_{s+k-i,l}))\ran\\
\ti g_s&=g_s-g_i=2b_s+\lan\bfd(\afE_{i,s},\bfd(\afE_{s,j}))\ran-
\lan\bfd(\afE_{k,l}),\bfd(\afE_{k,s+k-i})\ran
\end{split}
\end{equation*}
where $b_s$ is as in \ref{vi A,B A1,B1}. We can now prove the following commutator formulas in $\dbfHa$.

\begin{Prop}\label{commuting formula between elements associated with indecomposable modules}
Assume $i,j\in\mbz$, $i<j$ and $k<l$.

(1) If $\bar j\not=\bar l$ and $\bar i\not=\bar k$, then $u_{\afE_{i,j}}^+u_{\afE_{k,l}}^-=u_{\afE_{k,l}}^-u_{\afE_{i,j}}^+$.

(2) Assume $\bar j=\bar l$ and $\bar i\not=\bar k$.
\begin{itemize}
\item[(i)] If $k-l<i-j$, then
$$u_{\afE_{i,j}}^+u_{\afE_{k,l}}^--u_{\afE_{k,l}}^-u_{\afE_{i,j}}^+
=(\up^2-1)\sum_{i<s<j}\up^{\ti f_s}K_sK_j^{-1}u^-_{\afE_{k,s-j+l}}
u^+_{\afE_{i,s}}+\up^{\ti f_i}K_iK_j^{-1}u_{\afE_{k,i-j+l}}^-.$$
\item[(ii)] If $k-l>i-j$, then
$$u_{\afE_{i,j}}^+u_{\afE_{k,l}}^--u_{\afE_{k,l}}^-u_{\afE_{i,j}}^+
=(\up^2-1)\sum_{k-l+j<s<j}\up^{\ti f_s}K_sK_j^{-1}u^-_{\afE_{k,s-j+l}}
u^+_{\afE_{i,s}}+\up^{\ti f_{k-l+j}}K_kK_j^{-1}u_{\afE_{i,k-l+j}}^+.$$
\end{itemize}
(3) Assume $\bar j\not=\bar l$ and $\bar i =\bar k$.
\begin{itemize}
\item[(i)] If $k-l<i-j$, then
$$u_{\afE_{i,j}}^+u_{\afE_{k,l}}^--u_{\afE_{k,l}}^-u_{\afE_{i,j}}^+
=(1-\up^2)\sum_{i<s<j}\up^{\ti g_s}K_sK_i^{-1}u^+_{\afE_{s,j}}
u^-_{\afE_{s+k-i,l}}-\up^{\ti g_j}K_jK_i^{-1}u_{\afE_{j+k-i,l}}^-.$$
\item[(ii)] If $k-l>i-j$, then
$$u_{\afE_{i,j}}^+u_{\afE_{k,l}}^--u_{\afE_{k,l}}^-u_{\afE_{i,j}}^+
=(1-\up^2)\sum_{i<s<l-k+i}\up^{\ti g_s}K_sK_i^{-1}u^+_{\afE_{s,j}}
u^-_{\afE_{s+k-i,l}}-\up^{\ti g_{l-k+i}}K_lK_i^{-1}u_{\afE_{l-k+i,j}}^+.$$
\end{itemize}
(4) Assume $\bar j =\bar l$ and $\bar i =\bar k$.
\begin{itemize}
\item[(i)]If $k-l=i-j$, then $\afE_{i,j}=\afE_{k,l}$ and
\begin{equation*}
\begin{split}
u_{\afE_{i,j}}^+u_{\afE_{i,j}}^--u_{\afE_{i,j}}^-u_{\afE_{i,j}}^+
&=(\up^2-1)\sum_{i<s<j}(\up^{\ti f_s}K_sK_j^{-1}u^-_{\afE_{i,s}}
u^+_{\afE_{i,s}}-\up^{\ti g_{s}}K_sK_i^{-1}u_{\afE_{s,j}}^+u_{\afE_{s,j}}^-)\\
&\quad\quad+\frac{K_iK_j^{-1}-K_{i}^{-1}K_j}{\up^2-1}\up^{\ti f_i}.
\end{split}
\end{equation*}
\item[(ii)]If $k-l<i-j$, then
\begin{equation*}
\begin{split}
u_{\afE_{i,j}}^+u_{\afE_{k,l}}^--u_{\afE_{k,l}}^-u_{\afE_{i,j}}^+
&=(\up^2-1)\sum_{i<s<j}(\up^{\ti f_s}K_sK_j^{-1}u^-_{\afE_{k,s-j+l}}u^+_{\afE_{i,s}}-\up^{\ti g_s}K_sK_i^{-1}u^+_{\afE_{s,j}}u^-_{\afE_{s+k-i,l}})\\
&\qquad+\up^{\ti f_i}K_iK_j^{-1}u^-_{\afE_{k,i-j+l}}-\up^{\ti g_j}
K_i^{-1}K_ju^-_{\afE_{j+k-i,l}}.
\end{split}
\end{equation*}
\item[(iii)]If $k-l>i-j$, then
\begin{equation*}
\begin{split}
u_{\afE_{i,j}}^+u_{\afE_{k,l}}^--u_{\afE_{k,l}}^-u_{\afE_{i,j}}^+
&=\up^{\ti f_{k-l+j}}K_iK_j^{-1}u^+_{\afE_{i,k-l+j}}+(\up^2-1)\sum_{k-l+j<s<j}\up^{\ti f_s}K_sK_j^{-1}u^-_{\afE_{k,s-j+l}}u^+_{\afE_{i,s}}\\
&\quad-\up^{\ti g_{l-k+i}}K_i^{-1}K_j u^+_{\afE_{l-k+i,j}}-(\up^2-1)
\sum_{i<s<l-k+i}\up^{\ti g_s}K_i^{-1}K_su^+_{\afE_{s,j}}u^-_{\afE_{s+k-i,l}}.
\end{split}
\end{equation*}
\end{itemize}
\end{Prop}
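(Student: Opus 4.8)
The plan is to compute the commutator $u_{\afE_{i,j}}^+u_{\afE_{k,l}}^--u_{\afE_{k,l}}^-u_{\afE_{i,j}}^+$ directly by specializing the identity $L_{A,B}=R_{A,B}$ of \ref{L=R} to $A=\afE_{i,j}$ and $B=\afE_{k,l}$. First I would write $L_{A,B}$ and $R_{A,B}$ out using the definitions in \eqref{def of LR}, and then plug in the explicit values of the structure constants $\vi_{A,B}^{A_1,B_1}$ and $\ti{\vi_{A,B}^{A_1,B_1}}$ supplied by \ref{vi A,B A1,B1}. The leading terms ($A_1=A$, $B_1=B$, with coefficient $1$) produce exactly $u_{\afE_{k,l}}^-u_{\afE_{i,j}}^+$ on the left-hand side and $u_{\afE_{i,j}}^+u_{\afE_{k,l}}^-$ on the right-hand side, so the identity $L_{A,B}=R_{A,B}$ rearranges into the desired commutator formula; all remaining terms come from the ``either $A_1\not=A$ or $B_1\not=B$'' case, which by \ref{vi A,B A1,B1} forces $\bar j=\bar l$ (for $L$) or $\bar i=\bar k$ (for $R$).

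Part (1) is then immediate: if $\bar j\ne\bar l$ and $\bar i\ne\bar k$, the only surviving term on each side of $L_{A,B}=R_{A,B}$ is the leading one, so the two products agree. For the remaining parts I would organize the computation by which of $\bar j=\bar l$, $\bar i=\bar k$ hold, and within each by the sign of $(k-l)-(i-j)$, exactly as in the statement. In part (2), only $L_{A,B}$ acquires extra terms (since $\bar i\ne\bar k$ kills the nontrivial $\ti\vi$'s), and the four-case formula for $\vi_{A,B}^{A_1,B_1}$ in \ref{vi A,B A1,B1}(1)(ii) — the generic $s$ with $\max\{i,k-l+j\}<s<j$, plus the boundary cases $k-l+j=i$, $k-l+j<i$, $k-l+j>i$ — gives precisely the sum over $s$ together with the single boundary term displayed in (2)(i) and (2)(ii). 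The bookkeeping here is to match the exponents: the $\ti K$-power $\ti K\su{\bfd(B)-\bfd(B_1)}$ must be rewritten as $K_sK_j^{-1}$ (or $K_iK_j^{-1}$, $K_kK_j^{-1}$ at the boundary) using $\bfd(\afE_{k,l})-\bfd(\afE_{k,s-j+l})=\bfd(\afE_{s-j+l,l})$ and similar dimension-vector identities, and the overall scalar must be assembled into $\up^{\ti f_s}$; this is exactly why $f_s$ and $\ti f_s=f_s-f_j$ were defined just before the proposition. Part (3) is the mirror image using $R_{A,B}$ and \ref{vi A,B A1,B1}(2)(ii), producing the $\up^{\ti g_s}$-terms with $K_sK_i^{-1}$ factors.

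Part (4) is the case $\bar i=\bar k$ and $\bar j=\bar l$, where both $L_{A,B}$ and $R_{A,B}$ contribute nontrivial terms; here one subtracts the $R$-side expansion from the $L$-side expansion. Subcase (i), $\afE_{i,j}=\afE_{k,l}$, is the most delicate: the boundary contributions from $\vi$ (the $(\up^2-1)^{-1}\up^{2a_i}$ term with $A_1=B_1=0$) and from $\ti\vi$ (the $(\up^2-1)^{-1}\up^{2b_j}$ term with $A_1=B_1=0$) both survive and combine, via $a_i$-versus-$b_j$ exponent comparison and the identity $\ti f_i=\ti g_j$ (which should follow from the definitions of $f,g$ after a short computation with Euler forms), into the diagonal term $\frac{K_iK_j^{-1}-K_i^{-1}K_j}{\up^2-1}\up^{\ti f_i}$; meanwhile the generic-$s$ terms from the two sides give the two sums with $u^-_{\afE_{i,s}}u^+_{\afE_{i,s}}$ and $u^+_{\afE_{s,j}}u^-_{\afE_{s,j}}$. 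Subcases (ii) and (iii) just superpose the extra-term patterns of parts (2) and (3): the $\up^{\ti f_s}$-family from $L$ and the $\up^{\ti g_s}$-family from $R$, each with its own boundary term, and one checks there is no further collision between them because the boundary matrices differ.

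\textbf{Main obstacle.} The conceptual content is entirely in \ref{L=R} and \ref{vi A,B A1,B1}; the real work is the exponent bookkeeping — repeatedly converting $\ti K\su{\bfd(B)-\bfd(B_1)}$ to products of the $K_i$'s and collapsing the resulting powers of $\up$ (coming from the Euler-form brackets in \eqref{def of LR}, the $\up^{2a_s}$ or $\up^{2b_s}$ from \ref{vi A,B A1,B1}, and the $\ti K$-commutation via \ref{commuting formula K^bfj t}) into the compact symbols $\ti f_s$ and $\ti g_s$, and in part (4)(i) verifying the key cancellation $\ti f_i=\ti g_j$ that produces the clean diagonal term. None of this is deep, but it is the step where errors are easy to make, so I would carry out the $n=1$ (or $\afE_{i,j}$ a simple root) case explicitly first to fix conventions before doing the general uniserial case.
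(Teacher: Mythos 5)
Your proposal follows exactly the route the paper takes: specialize Lemma \ref{L=R} to $A=\afE_{i,j}$, $B=\afE_{k,l}$, expand $L_{A,B}$ and $R_{A,B}$ using the explicit structure constants of Lemma \ref{vi A,B A1,B1} together with the identity $\ti K^{\bfd(\afE_{s,t})}=K_sK_t^{-1}$, normalize by $\up^{f_j}=\up^{g_i}$ so that the leading terms become $u_B^-u_A^+$ and $u_A^+u_B^-$, and absorb the remaining exponents into $\ti f_s,\ti g_s$. The cancellation $\ti f_i=\ti g_j$ that you single out in case (4)(i) is indeed the nontrivial check implicit in the paper's terse ``combining this with \ref{L=R} proves the assertion,'' and it holds by a short $m_{s,t}$/Euler-form calculation as you anticipate.
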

\begin{proof}
For convenience, we let $u^+_{\afE_{s,s}}=
u^-_{\afE_{s,s}}=1$ for $s\in\mbz$.
Let $A=\afE_{i,j}$ and $B=\afE_{k,l}$.
Recall from \eqref{def of LR} the definition of $L_{A,B}$ and $R_{A,B}$. Applying \eqref{deg(uA^+)} gives that
$\ti K^{\bfd(C)}=K^{\deg(u_C^+)}=K^{\ro(C)-\co(C)}$
for $C\in\afThnp$. In particular we have
$\ti K^{\bfd(\afE_{s,t})}=K_sK_t^{-1}$ for $s<t$.
This together with \ref{vi A,B A1,B1} implies that
\begin{equation*}
\begin{split}
L_{A,A}&=\up^{f_j}u_A^-u_A^++(\up^2-1)\sum_{i<s<j}\up^{f_s}K_sK_j^{-1}
u_{\afE_{i,s}}^-u^+_{\afE_{i,s}}+\frac{\up^{f_i}}{\up^2-1}K_iK_j^{-1},\\
R_{A,A}&=\up^{g_i}u_A^+u_A^-+(\up^2-1)\sum_{i<s<j}\up^{g_s}K_sK_i^{-1}
u_{\afE_{s,j}}^+u^-_{\afE_{s,j}}+\frac{\up^{g_j}}{\up^2-1}K_i^{-1}K_j,
\end{split}
\end{equation*}
and if $A\not=B$, then
\begin{equation*}
\begin{split}
L_{A,B}&=
\begin{cases}
\up^{f_j}u_{B}^-u_{A}^+&\!\!\text{if $\bar j\not=\bar l$},\\
\up^{f_j}u_{B}^-u_{A}^+
+(\up^2-1)\sum\limits_{a<s<j}\up^{f_s}K_sK_j^{-1}
u_{\afE_{k,s-j+l}}^-u_{\afE_{i,s}}^++\up^{f_a}K_aK_j^{-1}
u_{\afE_{k,a-j+l}}^-u^+_{\afE_{i,a}}&\!\! \text{if $\bar j=\bar l$},
\end{cases}\\
R_{A,B}&=
\begin{cases}
\up^{f_j}u_{A}^+u_{B}^-&\text{if $\bar i\not=\bar k$},\\
\up^{f_j}u_{A}^+u_{B}^-
+(\up^2-1)\sum\limits_{i<s<b}\up^{g_s}K_sK_i^{-1}
u_{\afE_{s,j}}^+u_{\afE_{s+k-i,l}}^-+\up^{g_b}K_bK_i^{-1}
u_{\afE_{b,j}}^+u^-_{\afE_{b+k-i,l}}& \text{if $\bar i=\bar k$},
\end{cases}
\end{split}
\end{equation*}
where $a=\max\{i,k-l+j\}$ and $b=\min\{j,l-k+i\}$. Combining this with \ref{L=R} proves the assertion.
\end{proof}

\section{The classical ($\up=1$) case}
Let $\afuglq$ be the universal enveloping algebra of  $\afgl$, where
$\afgl={\frak{gl}_n}(\mbq)\ot\mbq[t,t^{-1}]$ is the loop algebra associated to the general linear Lie algebra $\frak{gl}_n(\mbq)$ over $\mbq$.
Using Hall algebras, we will construct the $\mbz$-form $\afuglz$ of $\afuglq$ and prove in \ref{v=1} that $\afuglz$ is a $\mbz$-subalgebra of $\afuglq$. In addition, we will prove in \ref{integral affine Schur Weyl duality} that the natural algebra homomorphism from $\afuglz$ to
$\afSrmbz$  is surjective, where
$\afSrmbz=\afSr\ot_\sZ\mbz$ is the affine Schur algebra over $\mbz$.

Recall the set $\afMnq$ defined in \ref{Notaion}.
We will identify
$\afgl$ with $\afMnq$ via the following lie algebra isomorphism
$$\afMnq\lra\afgl ,\,\,\,\afE_{i,j+ln}\longmapsto E_{i,j}\ot t^l, \,1\le i,j\le n,l\in\mbz. $$
Let $\afuglqp$ (resp., $\afuglqm$, $\afuglqz$) be the subalgebra of $\afuglq$ generated
by $\afE_{i,j}$ (resp., $\afE_{j,i}$, $\afE_{i,i}$), for $1\leq i\leq n$, $j\in\mbz$ and $i<j$.
Then we have
\begin{equation}\label{tri deco}
 \afuglq=\afuglqp\ot\afuglqz\ot\afuglqm,
\end{equation}

Recall $\dHap=\spann_\sZ\{\tu_A^+\mid A\in\afThnp\}$ and
$\dHam=\spann_\sZ\{\tu_A^-\mid A\in\afThnp\}$. Then $\dHap$ and $\dHam$ are all $\sZ$-subalgebras of $\dbfHa$. Note that $\dHap\cong\dHam^{\text{op}}\cong\Ha$, where $\Ha$ is the Hall algebra over $\sZ$ associated with cyclic quivers $\tri$.  Let $\dHapmbq=\dHap\ot_\sZ\mbq$, $\dHapmbz=\dHap\ot_\sZ\mbz$, $\dHammbq=\dHam\ot_\sZ\mbq$ and
$\dHammbz=\dHam\ot_\sZ\mbz$, where
$\mbq$ and $\mbz$ are regarded as $\mbz$-modules by specializing $\up$ to $1$. For $A\in\afThnp$ let $u_{A,1}^+=u_A^+\ot 1$ and $u_{A,1}^-=u_A^-\ot 1$.

\begin{Lem}[{\cite[6.1.2]{DDF}}]\label{th+,th-}
There is a unique injective algebra homomorphism
$\th^+:\dHapmbq\ra\afuglq$ (resp., $\th^-:\dHammbq\ra\afuglq$) taking $u_{{\afE_{i,j},1}}^+\mapsto\afE_{i,j}$ (resp.,$u_{{\afE_{i,j},1}}^-\mapsto\afE_{j,i}$) for all $i<j$ such that $\th^+(\dHapmbq)=\afuglqp$ and $\th^-(\dHammbq)=\afuglqm$.
\end{Lem}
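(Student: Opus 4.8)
The plan is to obtain $\th^+$ as the inverse of a surjective algebra homomorphism $\psi\colon\afuglqp\to\dHapmbq$ carrying $\afE_{i,j}\mapsto u^+_{\afE_{i,j},1}$, and to argue symmetrically for $\th^-$. Two inputs are needed at the outset. First, by \cite[1.4.3]{DDF} the algebra $\dbfHa$ is generated by the elements $u^+_{\afE_{i,j}},u^-_{\afE_{i,j}},K_i$; since $\afE_{i,j}\in\afThnp$, the elements $u^\pm_{\afE_{i,j}}$ differ from $\tu^\pm_{\afE_{i,j}}$ by a power of $\up$ and hence lie in $\dHap,\dHam$, so after specialising $\up=1$ the algebra $\dHapmbq$ is generated over $\mbq$ by the $u^+_{\afE_{i,j},1}$ (and $\dHammbq$ by the $u^-_{\afE_{i,j},1}$). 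Second, $\afuglqp$ is, by its definition as the subalgebra of $\afuglq$ generated by the $\afE_{i,j}$ with $1\le i\le n$, $j>i$, nothing but the universal enveloping algebra $U(\afgl^+)$ of the Lie subalgebra $\afgl^+\subseteq\afgl$ spanned by these elements: a direct computation with $[E_{a,b}\ot t^p,E_{c,d}\ot t^q]$ shows $[\afE_{i,j},\afE_{k,l}]=\dt_{\bar j,\bar k}\afE_{i,j+l-k}-\dt_{\bar i,\bar l}\afE_{k,j+l-i}$ (the index pairs $(i,j+l-k)$ and $(k,j+l-i)$ being again strictly increasing), so $\afgl^+$ is bracket-closed.

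To construct $\psi$ it suffices, by the universal property of $U(\afgl^+)$, to verify that the generators $u^+_{\afE_{i,j},1}$ of $\dHapmbq$ obey the Lie relations of $\afgl^+$, i.e. that $u^+_{\afE_{i,j},1}u^+_{\afE_{k,l},1}-u^+_{\afE_{k,l},1}u^+_{\afE_{i,j},1}$ is the image of $[\afE_{i,j},\afE_{k,l}]$. I would compute the products from \ref{presentation-dbfHa}(3), which at $\up=1$ reduces to $u^+_{\afE_{i,j},1}u^+_{\afE_{k,l},1}=\sum_{C\in\afThnp}\vi^{C}_{\afE_{i,j},\afE_{k,l}}|_{\up^2=1}\,u^+_{C,1}$, the Euler-form prefactor specialising to $1$. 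Since the representations $M(\afE_{i,j})$ are uniserial — the key local input being, as in the proof of \ref{vi A,B A1,B1}, that $\vi^{\afE_{s,t}}_{A_1,A_2}\in\{0,1\}$ and that a uniserial module has at most one submodule of each length — one identifies the matrices $C$ occurring in each product and checks that, after antisymmetrising, all terms cancel except (at most) $u^+_{\afE_{i,j+l-k},1}$, present when $\bar j=\bar k$, and $u^+_{\afE_{k,j+l-i},1}$, present when $\bar i=\bar l$, each with coefficient $\pm1$; this is precisely the image of $[\afE_{i,j},\afE_{k,l}]$. Hence $\psi$ exists, and it is surjective by the generation statement of the previous paragraph.

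For injectivity I would compare graded dimensions. Both algebras are $\afmbnn$-graded with $\deg u^+_{\afE_{i,j},1}=\deg\afE_{i,j}=\bfd(\afE_{i,j})$, and $\psi$ respects the grading. The graded pieces are finite-dimensional: for fixed $\bfd$ the quantity $\sg(\bfd)=\sum_{1\le i\le n,\,j>i}a_{i,j}(j-i)$ (the $\mbq$-dimension of $M(A)$) is determined, which bounds both the lengths $j-i$ with $a_{i,j}\neq0$ and the multiplicities $a_{i,j}$, leaving only finitely many $A\in\afThnp$ with $\bfd(A)=\bfd$. On the Hall side $\{u^+_{A,1}\mid A\in\afThnp\}$ is a $\mbq$-basis of $\dHapmbq$, so $\dim(\dHapmbq)_{\bfd}=\#\{A\in\afThnp\mid\bfd(A)=\bfd\}$; on the enveloping-algebra side a PBW basis of $U(\afgl^+)$ consists of ordered monomials in the $\afE_{i,j}$, so $\dim(\afuglqp)_{\bfd}=\#\{(m_{i,j})\mid\sum m_{i,j}\bfd(\afE_{i,j})=\bfd\}$, which is the same count via $A=\sum m_{i,j}\afE_{i,j}$ and the additivity $\bfd(A)=\sum m_{i,j}\bfd(\afE_{i,j})$. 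Thus the graded dimensions agree; a graded surjection between graded spaces of equal finite graded dimension is an isomorphism, and $\th^+:=\psi^{-1}$ does the job. It is unique because it is prescribed on a generating set, injective since $\psi$ is an isomorphism, and $\th^+(\dHapmbq)=\afuglqp$. The statement for $\th^-$ follows in the same way using \ref{presentation-dbfHa}(4) in place of (3), or by transporting $\th^+$ through the anti-automorphism of $\dbfHa$ interchanging $u^+_A$ and $u^-_A$, which induces one on $\afuglq$ swapping $\afuglqp$ and $\afuglqm$.

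The real work lies in the commutator computation of the second paragraph: controlling the $\up^2=1$ specialisations of the Hall polynomials $\vi^{C}_{\afE_{i,j},\afE_{k,l}}$ finely enough to be certain that, upon antisymmetrising, every decomposable and "partially overlapping" contribution cancels between the two orders of multiplication, and that the surviving uniserial-glue terms occur with coefficient exactly $\pm1$, so as to reproduce $[\afE_{i,j},\afE_{k,l}]$ on the nose. This is the $\up=1$ avatar of the indecomposable-generator computations underlying \ref{vi A,B A1,B1} and \ref{commuting formula between elements associated with indecomposable modules}, and it is forced by the uniseriality of the $M(\afE_{i,j})$; the remaining ingredients (generation, finiteness of graded pieces, the dimension count) are routine.
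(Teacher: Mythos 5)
The paper does not prove this lemma; it is quoted as \cite[6.1.2]{DDF} and used as a black box, so there is no in-paper proof to compare against. Your strategy---building $\psi\colon\afuglqp\to\dHapmbq$ from the universal property of $U(\afgl^+)$, checking the Lie relations on the indecomposable generators, and inverting it by a graded dimension count---is a sensible route to the statement. The Lie-subalgebra closure of $\afgl^+$, the identification of $\afuglqp$ with $U(\afgl^+)$ via PBW, the vanishing of the Euler-form prefactor at $\up=1$, the finiteness of the graded pieces, and the bijection $A\leftrightarrow(m_{i,j})$ giving equal graded dimensions are all correct. So is the identity you need, $[\afE_{i,j},\afE_{k,l}]=\dt_{\bar j,\bar k}\afE_{i,l+j-k}-\dt_{\bar i,\bar l}\afE_{k,j+l-i}$ versus its Hall-algebra counterpart at $\up=1$; that is precisely \cite[6.1.1]{DDF}, which the present paper already invokes as \eqref{relation2} in the proof of \ref{vi}, so you could cite it rather than redo the Hall-polynomial bookkeeping you defer in your last paragraph.

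The genuine gap is in the surjectivity of $\psi$. You claim $\dHapmbq$ is generated over $\mbq$ by the $u^+_{\afE_{i,j},1}$, deducing it from the fact that $\dbfHa$ is generated over $\mbq(\up)$ by $u^\pm_{\afE_{i,j}},K_i$ together with the remark that these lie in the $\sZ$-lattice. That inference is invalid in general: generation of a $\mbq(\up)$-algebra by elements of a $\sZ$-form does not descend to the specialization, because the $\mbq(\up)$-coefficients expressing an arbitrary $\tu^+_A$ through monomials in the $u^+_{\afE_{i,j}}$ may a priori have poles at $\up=1$. What is actually required is a triangularity statement that survives specialization---e.g.\ that for $A=\sum m_{i,j}\afE_{i,j}$ an ordered product of the $\tu^+_{\afE_{i,j}}$ equals $c\,\tu^+_A$ plus a $\sZ$-combination of $\tu^+_B$ for $B$ strictly lower in a suitable order, with the leading Hall-polynomial coefficient $c$ a unit at $\up=1$. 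Such a statement would also give linear independence of the images of PBW monomials, hence injectivity of $\psi$ directly, and the dimension count then supplies whichever direction was not argued; your proposal supplies neither, and without one the invertibility of $\psi$ is not established.
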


We now use \ref{th+,th-} to
introduce the integral form $\afuglz$ for $\afuglq$.
Let $\afuglzp=\th^+(\dHapmbz)$ and
$\afuglzm=\th^-(\dHammbz)$. Let $\afuglzz$ be the $\mbz$-submodule of $\afuglq$
spanned by $\prod_{1\leq i\leq n}\bpa{\afE_{i,i}}{\la_i}$, for  $\la\in\afmbnn$, where
$$\bpa{\afE_{i,i}}{\la_i}=\frac{\afE_{i,i}(\afE_{i,i}-1)\cdots
(\afE_{i,i}-\la_i+1)}{\la_i!}.$$
Let
\begin{equation*}
\begin{split}
\afuglz&=\afuglzp\afuglzz\afuglzm=\spann_\mbz\bigg\{w_{A}^+\prod_{1\leq i\leq n}\bpa{\afE_{i,i}}{\la_i}w_{B}^-\,\big|\, A,B\in\afThnp,\,\la\in\afmbnn\bigg\},
\end{split}
\end{equation*}
where
$w_A^+=\th^+(u_{A,1}^+)\ \text{and}\ w_B^-=\th^-(u_{B,1}^-).$

\begin{Lem}\label{integral basis for afuglz}
The set $\big\{w_{A}^+\prod_{1\leq i\leq n}\bpa{\afE_{i,i}}{\la_i}w_{B}^-\,\big|\, A,B\in\afThnp,\,\la\in\afmbnn\big\}$
forms a $\mbz$-basis for $\afuglz$ and $\afuglq\cong\afuglz\ot_\mbz\mbq$.
\end{Lem}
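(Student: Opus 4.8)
The plan is to reduce everything to the triangular decomposition \eqref{tri deco} together with the fact that $\dHap$ and $\dHam$ are free over $\sZ$ on $\{\tu_A^+\mid A\in\afThnp\}$ and $\{\tu_A^-\mid A\in\afThnp\}$ (these elements are, up to invertible scalars in $\sZ$, part of the PBW basis of $\dbfHa$, hence $\sZ$-linearly independent). First I would record that $\{w_A^+\mid A\in\afThnp\}$ is simultaneously a $\mbz$-basis of $\afuglzp$ and a $\mbq$-basis of $\afuglqp$: by base change, $\{u_{A,1}^+\}$ is a $\mbz$-basis of $\dHapmbz$ and a $\mbq$-basis of $\dHapmbq$, and since $\dHap$ is $\sZ$-free we have $\dHapmbz\subseteq\dHapmbq$, so the injective homomorphism $\th^+$ of \ref{th+,th-} restricts to an injection on $\dHapmbz$ and carries these bases onto bases of $\afuglzp=\th^+(\dHapmbz)$ and $\afuglqp=\th^+(\dHapmbq)$. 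The symmetric argument gives that $\{w_B^-\mid B\in\afThnp\}$ is a $\mbz$-basis of $\afuglzm$ and a $\mbq$-basis of $\afuglqm$.

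Next I would identify the Cartan part. By definition $\afuglqz$ is generated by the pairwise commuting elements $\afE_{1,1},\dots,\afE_{n,n}$, which under the identification $\afMnq\cong\afgl$ correspond to $E_{1,1}\ot 1,\dots,E_{n,n}\ot 1$ and span an $n$-dimensional abelian subalgebra of $\afgl$; hence by the PBW theorem $\afuglqz$ is the polynomial $\mbq$-algebra $\mbq[\afE_{1,1},\dots,\afE_{n,n}]$. Since $\{\bpa{x}{m}\mid m\ge 0\}$ is a $\mbq$-basis of $\mbq[x]$, the products $\big\{\prod_{1\le i\le n}\bpa{\afE_{i,i}}{\la_i}\mid\la\in\afmbnn\big\}$ form a $\mbq$-basis of $\afuglqz$; being $\mbq$-linearly independent they are a fortiori $\mbz$-linearly independent, hence a $\mbz$-basis of $\afuglzz$.

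Now \eqref{tri deco} says precisely that multiplication defines a $\mbq$-linear isomorphism $\afuglqp\ot_\mbq\afuglqz\ot_\mbq\afuglqm\to\afuglq$. Feeding in the three $\mbq$-bases above, the set $\big\{w_A^+\prod_{1\le i\le n}\bpa{\afE_{i,i}}{\la_i}w_B^-\mid A,B\in\afThnp,\la\in\afmbnn\big\}$ is the image of a $\mbq$-basis of the tensor product, hence a $\mbq$-basis of $\afuglq$; in particular it is $\mbz$-linearly independent. Since by the definition $\afuglz=\afuglzp\afuglzz\afuglzm$ this set also spans $\afuglz$ over $\mbz$, it is a $\mbz$-basis of $\afuglz$. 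Consequently $\afuglz$ is $\mbz$-free, and the canonical map $\afuglz\ot_\mbz\mbq\to\afuglq$ carries this $\mbz$-basis to the $\mbq$-basis just exhibited, so it is an isomorphism.

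I do not anticipate a real difficulty here: the statement is essentially PBW and base-change bookkeeping once \ref{th+,th-} and \eqref{tri deco} are in hand. The one point deserving care is the identification of $\afuglqz$ with a genuine polynomial ring in $\afE_{1,1},\dots,\afE_{n,n}$ -- so that the binomial monomials really are $\mbq$-linearly independent -- which rests on the PBW theorem applied to the abelian subalgebra $\bigoplus_{1\le i\le n}\mbq\,(E_{i,i}\ot 1)$ of $\afgl$, together with the (routine) check that the restrictions of $\th^+$ and $\th^-$ to the integral forms $\dHapmbz$ and $\dHammbz$ remain injective.
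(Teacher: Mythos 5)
Your argument is correct and follows essentially the same route as the paper: transfer $\mbz$-bases from $\dHapmbz$ and $\dHammbz$ through $\th^\pm$ of \ref{th+,th-}, observe that the binomial monomials $\prod_{1\le i\le n}\bpa{\afE_{i,i}}{\la_i}$ give a $\mbq$-basis of $\afuglqz$, and invoke the triangular decomposition \eqref{tri deco}. The only cosmetic difference is that you derive the Cartan statement directly from PBW (identifying $\afuglqz$ with a polynomial ring in $\afE_{1,1},\dots,\afE_{n,n}$), whereas the paper simply cites Kostant and Humphreys for this fact.
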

\begin{proof}
By \cite{Ko} and \cite[26.4]{Hubk} we conclude that
the set $\big\{\prod_{1\leq i\leq n}\bpa{\afE_{i,i}}{\la_i}\mid\la\in\afmbnn\big\}$ forms a $\mbq$-basis for $\afuglqz$.
Now the assertion follows from \eqref{tri deco} and \ref{th+,th-}.
\end{proof}

To prove that $\afuglz$ is a $\mbz$-subalgebra of $\afuglq$, we need some preparation. Recall $\ddHa$ defined in \eqref{ddHa}.
Let $\ddHambq=\ddHa\ot_\sZ\mbq$. By \ref{interal form}, $\ddHambq$ is a $\mbq$-algebra. For $A\in\afThnp$ and $\la\in\afmbzn$, let $u_{A,1}^+=u_A^+\ot 1$, $u_{A,1}^-=u_A^-\ot 1$ and $1_{\la,1}=1_\la\ot 1$. By \ref{commuting formula between elements associated with indecomposable modules}, we immediately get the following result.

\begin{Lem}\label{commuting formula v=1}
Let $i,j\in\mbz$, $i<j$ and $k<l$. The following formulas hold in $\ddHambq$.

(1) If $\bar j\not=\bar l$ and $\bar i\not=\bar k$, then $1_{\la,1}u_{\afE_{i,j},1}^+u_{\afE_{k,l},1}^-
=1_{\la,1}u_{\afE_{k,l},1}^-u_{\afE_{i,j},1}^+$.

(2) If $\bar j=\bar l$ and $\bar i\not=\bar k$, then
\begin{equation*}
1_{\la,1}(u_{\afE_{i,j},1}^+u_{\afE_{k,l},1}^--u_{\afE_{k,l},1}^-
u_{\afE_{i,j},1}^+)=
\begin{cases}
1_{\la,1}u_{\afE_{k,i-j+l},1}^- &\text{if $k-l<i-j$}\\
1_{\la,1}u_{\afE_{i,k-l+j},1}^+&\text{if $k-l>i-j$}
\end{cases}
\end{equation*}

(3) If $\bar j\not=\bar l$ and $\bar i =\bar k$, then
\begin{equation*}
1_{\la,1}(u_{\afE_{i,j},1}^+u_{\afE_{k,l},1}^--u_{\afE_{k,l},1}^-
u_{\afE_{i,j},1}^+)=
\begin{cases}
-1_{\la,1}u_{\afE_{j+k-i,l},1}^- &\text{if $k-l<i-j$}\\
-1_{\la,1}u_{\afE_{l-k+i,j},1}^+&\text{if $k-l>i-j$}
\end{cases}
\end{equation*}

(4) If $\bar j =\bar l$ and $\bar i =\bar k$, then
\begin{equation*}
1_{\la,1}(u_{\afE_{i,j},1}^+u_{\afE_{k,l},1}^--u_{\afE_{k,l},1}^-
u_{\afE_{i,j},1}^+)=
\begin{cases}
(\la_i-\la_j)1_{\la,1} &\text{if $k-l=i-j$}\\
1_{\la,1}(u^-_{\afE_{k,i-j+l},1}- u^-_{\afE_{j+k-i,l},1})&\text{if $k-l<i-j$}\\
1_{\la,1}(u^+_{\afE_{i,k-l+j},1} - u^+_{\afE_{l-k+i,j},1})&\text{if $k-l>i-j$}
\end{cases}
\end{equation*}
\end{Lem}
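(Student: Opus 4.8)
The plan is to derive Lemma \ref{commuting formula v=1} as a direct specialization of the commutator formulas in Proposition \ref{commuting formula between elements associated with indecomposable modules} after passing to $\ddHambq$ and setting $\up=1$. The key point is that in $\ddHambq$ the elements $K_i$ act as $1_{\la,1}K_i = 1_{\la,1}$ (since $\pi_{\la,\la}(K_i)=\up^{\la_i}\pi_{\la,\la}(1)$ specializes to $1_{\la,1}$), so all the group-like factors $K_sK_j^{-1}$, $K_iK_j^{-1}$, etc., that appear in Proposition \ref{commuting formula between elements associated with indecomposable modules} simply disappear when we left-multiply by $1_{\la,1}$ and specialize. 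Simultaneously, every coefficient of the form $(\up^2-1)$ or $(1-\up^2)$ vanishes at $\up=1$, which kills all the summation terms $\sum_{i<s<j}(\cdots)$. What remains after these two simplifications is exactly the right-hand sides listed in the four cases of Lemma \ref{commuting formula v=1}.

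First I would record the elementary fact that $1_{\la,1}u\,K_i^{\pm1}\mapsto 1_{\la,1}u'$ under the identification $\ddHa\ot_\sZ\mbq$, i.e. the $\ti K$-prefactors act trivially after specialization; this follows from the definition $1_\la=\pi_{\la,\la}(1)$ and the bimodule structure \eqref{bimodule} together with Lemma \ref{commuting formula K^bfj t}. Then I would go case by case through Proposition \ref{commuting formula between elements associated with indecomposable modules}: in case (1) the right-hand side is already zero, giving (1); in case (2), left-multiplying by $1_{\la,1}$ and setting $\up=1$ kills the $(\up^2-1)\sum_{i<s<j}$ term and turns $\up^{\ti f_i}K_iK_j^{-1}$ (resp. $\up^{\ti f_{k-l+j}}K_kK_j^{-1}$) into $1$, leaving only $1_{\la,1}u^-_{\afE_{k,i-j+l},1}$ (resp. $1_{\la,1}u^+_{\afE_{i,k-l+j},1}$); case (3) is entirely parallel, with the sign $-1$ coming from the $-\up^{\ti g_j}$ (resp. $-\up^{\ti g_{l-k+i}}$) prefactor surviving specialization; and in case (4), subcases (ii), (iii) combine the two one-term survivors, while subcase (i) needs a little more care.

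The one genuinely non-routine spot is the diagonal subcase (4)(i), where we must show that
$$1_{\la,1}\bigg(\frac{K_iK_j^{-1}-K_i^{-1}K_j}{\up^2-1}\up^{\ti f_i}\bigg)\Big|_{\up=1}=(\la_i-\la_j)1_{\la,1}.$$
Here both numerator and denominator vanish at $\up=1$, so this is a genuine limit computation. On $\ddHambq$ we have $1_{\la,1}K_i^{\pm1}$ specializing to $1_{\la,1}$, but to evaluate the ratio we must work one step back in $\ddHa$ before specializing: $1_\la K_iK_j^{-1}=\up^{\la_i-\la_j}1_\la$ and $1_\la K_i^{-1}K_j=\up^{\la_j-\la_i}1_\la$, so the bracket equals $\frac{\up^{\la_i-\la_j}-\up^{\la_j-\la_i}}{\up^2-1}\up^{\ti f_i}1_\la$. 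Now $\ti f_i=0$ when $\afE_{i,j}=\afE_{k,l}$ (which I would check from the definitions of $f_s$ and $a_s$, using $k-l+j=i$ so $a_i=0$ and the Euler-form terms cancel), and $\lim_{\up\to1}\frac{\up^{m}-\up^{-m}}{\up^2-1}=m$ for any integer $m$; taking $m=\la_i-\la_j$ gives the claimed coefficient. I expect the verification $\ti f_i=0$ (equivalently $f_i=f_j$ in the diagonal case) to be the only calculation requiring attention — everything else is immediate from "$(\up^2-1)\mapsto0$" and "$1_{\la,1}K_i\mapsto1_{\la,1}$".
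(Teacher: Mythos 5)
Your approach is exactly the paper's: Lemma \ref{commuting formula v=1} is obtained by applying $1_\la\cdot(-)$ to the formulas of Proposition \ref{commuting formula between elements associated with indecomposable modules}, using $1_\la K^{\bfj}=\up^{\la\cdot\bfj}1_\la$ to absorb all $\ti K$-prefactors, and then specializing at $\up=1$ (the paper just says ``immediately'' without spelling this out). One small correction to your case~(4)(i): the claim $\ti f_i=0$ (and $a_i=0$) when $\afE_{i,j}=\afE_{k,l}$ is actually false. With $k=i$, $l=j$ one finds $a_i=(j-i)-m_{i,j}$ and $\ti f_i=2a_i-\langle\bfd(\afE_{i,j}),\bfd(\afE_{i,j})\rangle$, which is generically nonzero (e.g.\ $n=2$, $i=1$, $j=2$ gives $\ti f_i=1$). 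What the paper's identity actually uses is $\ti f_i=\ti g_j$, which does hold in the diagonal case and lets the last term be written as a single fraction; the value of $\ti f_i$ itself is irrelevant because $\up^{\ti f_i}$ specializes to $1$ regardless. So your error is harmless: the key step remains $\frac{\up^{\la_i-\la_j}-\up^{\la_j-\la_i}}{\up^2-1}\in\sZ$ and its specialization at $\up=1$ equals $\la_i-\la_j$, and multiplying by $\up^{\ti f_i}|_{\up=1}=1$ gives the stated coefficient. The rest of your argument is correct: all $(\up^2-1)$-prefactored sums vanish at $\up=1$, and the surviving single terms match the four cases of the Lemma.
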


Mimicking the construction of $\hddbfHa$,
let $\hddHambq$ be
the $\mbq$-vector space of all formal (possibly infinite) $\mbq$-linear combinations
$\sum_{A,B\in\afThnp,\,\la\in\afmbzn}
\beta_{A,B,\la}u_{A,1}^+1_{\la,1} u_{B,1}^-$ satisfying the property (F) with a similar multiplication. This is an associative $\mbq$-algebra with an identity: $\sum_{\la\in\afmbzn}1_{\la,1}$.
The algebra $\afuglq$ is related to the algebra $\hddHambq$ in the following way (cf. \ref{Phi}).
\begin{Prop}\label{vi}
There is an injective algebra homomorphism $\vi:\afuglq\ra\hddHambq$
such that $\vi(\afE_{i,j})=\sum_{\la\in\afmbzn}u^+_{\afE_{i,j},1}1_{\la,1}$,
$\vi(\afE_{j,i})=\sum_{\la\in\afmbzn}u^-_{\afE_{i,j},1}1_{\la,1}$ and
$\vi(\afE_{i,i})=\sum_{\la\in\afmbzn}\la_i1_{\la,1}$ for $i<j$ and $\la\in\afmbzn$. Furthermore we have $\vi(w_{A}^+)=\sum_{\la\in\afmbzn}u_{A,1}^+1_{\la,1}$ and
$\vi(w_{A}^-)=\sum_{\la\in\afmbzn}u_{A,1}^-1_{\la,1}$ for $A\in\afThnp$.
\end{Prop}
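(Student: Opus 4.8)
The plan is to apply the universal property of $\afuglq=\sU(\afgl)$: it suffices to produce a Lie algebra homomorphism from $\afgl$ (with its usual bracket) into $\hddHambq$ regarded as a Lie algebra under the commutator, whereupon $\vi$ and its uniqueness are automatic. Under $\afgl=\afMnq$, the Lie algebra $\afgl$ has $\mbq$-basis $\{\afE_{i,j}\mid 1\le i\le n,\,j\in\mbz\}$, split into positive ($i<j$), diagonal ($i=j$) and negative ($i>j$) matrix units, which span Lie subalgebras $\afgl^+,\afgl^0,\afgl^-$ with $\afgl=\afgl^+\op\afgl^0\op\afgl^-$. First I would define a $\mbq$-linear map $\bar\vi\colon\afgl\to\hddHambq$ on this basis by $\bar\vi(\afE_{i,j})=\sum_{\la\in\afmbzn}u_{\afE_{i,j},1}^+1_{\la,1}$ for $i<j$, $\bar\vi(\afE_{j,i})=\sum_{\la\in\afmbzn}u_{\afE_{i,j},1}^-1_{\la,1}$ for $i<j$, and $\bar\vi(\afE_{i,i})=\sum_{\la\in\afmbzn}\la_i1_{\la,1}$; each of these sums satisfies condition (F), so it lies in $\hddHambq$. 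It then remains to check $[\bar\vi(x),\bar\vi(y)]=\bar\vi([x,y])$ for basis elements $x,y$, which by bilinearity and antisymmetry reduces to the six cases $[\afgl^+,\afgl^+]$, $[\afgl^-,\afgl^-]$, $[\afgl^0,\afgl^0]$, $[\afgl^0,\afgl^+]$, $[\afgl^0,\afgl^-]$ and $[\afgl^+,\afgl^-]$.

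Four of these cases are routine. Let $\psi^+\colon\dHapmbq\to\hddHambq$ be the map $z\mapsto\sum_{\la\in\afmbzn}z1_{\la,1}$; repeating the computation that proves $\Phi$ is an algebra homomorphism in \ref{Phi} (now using $1_\al z=z1_{\al-\deg(z)}$ for homogeneous $z$) shows $\psi^+$ is an algebra homomorphism, and similarly $\psi^-\colon\dHammbq\to\hddHambq$. Since $\th^+\colon\dHapmbq\tong\afuglqp$ is an algebra isomorphism (\ref{th+,th-}) and $\afuglqp$ is generated by $\afgl^+$, the composite $\psi^+\circ(\th^+)^{-1}\colon\afuglqp\to\hddHambq$ is an algebra homomorphism whose restriction to $\afgl^+$ sends $\afE_{i,j}\mapsto\sum_\la u_{\afE_{i,j},1}^+1_{\la,1}=\bar\vi(\afE_{i,j})$; hence $\bar\vi|_{\afgl^+}$ coincides with the restriction of an algebra homomorphism and preserves brackets, which settles $[\afgl^+,\afgl^+]$, and $[\afgl^-,\afgl^-]$ follows symmetrically from $\psi^-$ and $\th^-$. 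The case $[\afgl^0,\afgl^0]$ is immediate since the $1_{\la,1}$ are pairwise orthogonal idempotents. For $[\afgl^0,\afgl^\pm]$ a short computation using $1_{\la,1}u_{\afE_{k,l},1}^{\pm}=u_{\afE_{k,l},1}^{\pm}1_{\la-\deg(u_{\afE_{k,l}}^{\pm}),1}$ and \eqref{deg(uA^+)} shows $[\bar\vi(\afE_{i,i}),\bar\vi(\afE_{k,l})]$ equals the $(i,i)$-entry change, an integer, times $\bar\vi$ of the relevant root vector, which is exactly $\bar\vi([\afE_{i,i},\afE_{k,l}])$ computed by matrix multiplication in $\afMnq$.

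The substantial case is $[\afgl^+,\afgl^-]$. Here, for $i<j$ and $k<l$, expanding the two products in $\hddHambq$ and sliding the $1_{\la,1}$ past the $u^\pm$ shows $[\bar\vi(\afE_{i,j}),\bar\vi(\afE_{l,k})]=\sum_{\la\in\afmbzn}1_{\la,1}\bigl(u_{\afE_{i,j},1}^+u_{\afE_{k,l},1}^--u_{\afE_{k,l},1}^-u_{\afE_{i,j},1}^+\bigr)$, which is precisely the quantity evaluated in \ref{commuting formula v=1}. One then checks, case by case along the four alternatives there ($\bar j\ne\bar l$ and $\bar i\ne\bar k$; $\bar j=\bar l$, $\bar i\ne\bar k$; $\bar j\ne\bar l$, $\bar i=\bar k$; $\bar j=\bar l$ and $\bar i=\bar k$), that the right-hand sides agree with $\bar\vi([\afE_{i,j},\afE_{l,k}])$, where $[\afE_{i,j},\afE_{l,k}]=\delta_{\bar j,\bar l}\,\afE_{i,k-l+j}-\delta_{\bar i,\bar k}\,\afE_{l,j-i+k}$ in $\afMnq$; for instance, when $\afE_{l,k}=\afE_{j,i}$ this reads $\sum_\la(\la_i-\la_j)1_{\la,1}=\bar\vi(\afE_{i,i}-\afE_{j,j})$. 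No new computation is needed, since all relevant commutator formulas were obtained in \S5--\S6; the only delicate point is matching the indices $i-j+l$, $k-l+j$ and $j-i+k$ appearing in \ref{commuting formula v=1} with the matrix-unit positions in $\afMnq$ via the periodicity $\afE_{i,j}=\afE_{i+n,j+n}$. This completes the verification that $\bar\vi$ is a Lie algebra homomorphism, hence $\vi$ exists. For the ``furthermore'' assertion, $\vi|_{\afuglqp}$ and $\psi^+\circ(\th^+)^{-1}$ are algebra homomorphisms $\afuglqp\to\hddHambq$ agreeing on the generators $\afgl^+$ of $\afuglqp$, so they are equal; therefore $\vi(w_A^+)=\vi(\th^+(u_{A,1}^+))=\psi^+(u_{A,1}^+)=\sum_{\la}u_{A,1}^+1_{\la,1}$, and symmetrically $\vi(w_A^-)=\sum_{\la}u_{A,1}^-1_{\la,1}$.

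It remains to check injectivity. By \eqref{tri deco}, \ref{th+,th-} and the fact that $\{\prod_{1\le i\le n}\bpa{\afE_{i,i}}{\la_i}\mid\la\in\afmbnn\}$ is a $\mbq$-basis of $\afuglqz$, the set $\{w_A^+\prod_{1\le i\le n}\bpa{\afE_{i,i}}{\la_i}w_B^-\mid A,B\in\afThnp,\,\la\in\afmbnn\}$ is a $\mbq$-basis of $\afuglq$ (this is \ref{integral basis for afuglz} after $\ot_\mbz\mbq$). Using the ``furthermore'' clause, $\vi(\afE_{i,i})=\sum_\mu\mu_i1_{\mu,1}$, and orthogonality of the $1_{\mu,1}$, one computes $\vi\bigl(w_A^+\prod_i\bpa{\afE_{i,i}}{\la_i}w_B^-\bigr)=\sum_{\mu\in\afmbzn}\bigl(\prod_i\bpa{\mu_i}{\la_i}\bigr)u_{A,1}^+1_{\mu,1}u_{B,1}^-$. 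Since the elements $u_{A,1}^+1_{\mu,1}u_{B,1}^-$ are part of the defining basis of $\hddHambq$, a finite combination $\sum_{A,\la,B}c_{A,\la,B}w_A^+\prod_i\bpa{\afE_{i,i}}{\la_i}w_B^-$ in $\ker\vi$ forces $\sum_\la c_{A,\la,B}\prod_i\bpa{\mu_i}{\la_i}=0$ for all $A,B$ and all $\mu\in\afmbzn$; as the functions $(\mu_1,\dots,\mu_n)\mapsto\prod_i\bpa{\mu_i}{\la_i}$ with $\la\in\afmbnn$ are $\mbq$-linearly independent on $\mbz^n$, all $c_{A,\la,B}=0$, so $\vi$ is injective. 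The main obstacle is the periodic-index bookkeeping in the $[\afgl^+,\afgl^-]$ case; apart from that, the proof just assembles \ref{Phi}, \ref{th+,th-}, \ref{commuting formula v=1} and \ref{integral basis for afuglz}.
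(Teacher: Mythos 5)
Your argument is correct and follows essentially the same route as the paper: lifting a Lie homomorphism $\afgl\to\hddHambq$ (equivalently, verifying the presentation (a)--(b) that the paper writes down), using the constructed algebra maps $\psi^\pm\circ(\th^\pm)^{-1}$ (the paper's $\rho^\pm$) for the $\pm/\pm$ and ``furthermore'' parts, Lemma~\ref{commuting formula v=1} for $[\afgl^+,\afgl^-]$, and a Vandermonde-type independence of functions on $\mbz^n$ for injectivity. The only cosmetic differences are that the paper checks $[\afgl^+,\afgl^+]$ and $[\afgl^-,\afgl^-]$ by quoting [DDF, 6.1.1] directly rather than through $\psi^\pm\circ(\th^\pm)^{-1}$, and uses the monomial basis $\prod_i\afE_{i,i}^{j_i}$ of $\afuglqz$ (with $\det(\mu^{\bfj})\ne 0$ from [DDF, 6.3.3]) in the injectivity step where you use the binomial basis; both are equivalent.
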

\begin{proof}
For $x,y\in\hddHambq$ we set $[x,y]=xy-yx$.
Then
by \eqref{deg(uA^+)}
we have in $\hddHambq$,
\begin{equation}\label{relation1}
\begin{split}
\bigg[\sum_{\la\in\afmbzn}\la_i1_{\la,1},\sum_{\la\in\afmbzn}
u^+_{\afE_{k,l},1}1_{\la,1}\bigg]&=\sum_{\la\in\afmbzn}\la_iu^+_{\afE_{k,l},1}
1_{\la-\afbse_k+\afbse_l}-\sum_{\la\in\afmbzn}\la_iu^+_{\afE_{k,l},1}1_{\la,1}
\\
&=(\dt_{\bar i,\bar k}-\dt_{\bar i,\bar l})\sum_{\la\in\afmbzn}u^+_{\afE_{k,l},1}1_{\la,1}.
\end{split}
\end{equation}
Applying \cite[6.1.1]{DDF} yields
\begin{equation}\label{relation2}
\begin{split}
\bigg[\sum_{\la\in\afmbzn}u^+_{_{\afE_{i,j},1}}1_{\la,1},
\sum_{\la\in\afmbzn}u^+_{_{\afE_{k,l},1}}1_{\la,1}\bigg]
&=\sum_{\la\in\afmbzn}\big(\dt_{\bar
j,\bar k}u^+_{_{\afE_{i,l+j-k},1}}-\dt_{\bar l,\bar
i}u^+_{_{\afE_{k,j+l-i},1}}\big)1_{\la,1},\\
\bigg[\sum_{\la\in\afmbzn}u^-_{_{\afE_{i,j},1}}1_{\la,1},
\sum_{\la\in\afmbzn}u^-_{_{\afE_{k,l},1}}1_{\la,1}\bigg]&=
\sum_{\la\in\afmbzn}\big(\dt_{\bar i,\bar l}u_{\afE_{k+i-l,j},1}^--\dt_{\bar k,\bar j}
u^-_{\afE_{i+k-j,l},1}\big)1_{\la,1}.
\end{split}
\end{equation}
for $i<j$ and $k<l$.
Furthermore, it is easy to see that $\afuglq$ has a presentation with
generators $\afE_{i,j}$ ($1\leq i\leq n$, $j\in\mbz$), and relations:
\begin{itemize}
\item[(a)]
$[\afE_{i,i},\afE_{k,l}]=(\dt_{\bar i,\bar k}-\dt_{\bar i,\bar l})\afE_{k,l}$.
\item[(b)]
$[\afE_{i,j},\afE_{k,l}]=\dt_{\bar j,\bar
k}\afE_{i,l+j-k}-\dt_{\bar l,\bar i}\afE_{k,j+l-i}$ for $i\not=j$ and $k\not=l$.
\end{itemize}
This, together with \ref{commuting formula v=1}, \eqref{relation1} and \eqref{relation2}, implies that there is an algebra homomorphism $\vi:\afuglq\ra\hddHambq$
defined by sending $\afE_{i,j}$ to $\sum_{\la\in\afmbzn}u^+_{\afE_{i,j},1}1_{\la,1}$, $\afE_{j,i}$ to
$\sum_{\la\in\afmbzn}u^-_{\afE_{i,j},1}1_{\la,1}$, and
$\afE_{i,i}$ to
$\sum_{\la\in\afmbzn}\la_i1_{\la,1}$.

Using an argument similar to the proof of \ref{Phi}, we can show that
there is an injective $\mbq$-algebra homomorphism $\rho^+: \dHapmbq\ra\hddHambq$ (resp., $\rho^-: \dHammbq\ra\hddHambq$) taking $u_{A,1}^+\map\sum_{\la\in\afmbzn}u_{A,1}^+1_{\la,1}$ (resp.,
$u_{A,1}^-\map\sum_{\la\in\afmbzn}u_{A,1}^-1_{\la,1}$) for $A\in\afThnp$. Since $\vi\circ\th^\pm(u^\pm_{\afE_{i,j},1})=\rho^\pm(u^\pm_{\afE_{i,j},1})$ and $\dHapmmbq$ is generated by $u^\pm_{\afE_{i,j},1}$ for $i<j$, we see that $\vi\circ\th^\pm=\rho^\pm$ and hence $$\vi(w_{A}^\pm)=\vi\circ\th^\pm(u_{A,1}^\pm)=
\rho^\pm(u_{A,1}^\pm)=\sum_{\la\in\afmbzn}u_{A,1}^\pm1_{\la,1}$$  for $A\in\afThnp$.

Finally, we prove that $\vi$ is injective.  Assume $$x=\sum_{A,B\in\afThnp,\,\bfj\in\afmbnn}k_{A,B,\bfj}w_A^+\prod_{1\leq i\leq n}(\afE_{i,i})^{j_i}w_{B}^-\in\ker(\vi),$$
where $k_{A,B,\bfj}\in\mbq$.
Then $$\vi(x)=\sum_{A,B\in\afThnp,\,\mu\in\afmbzn}\bigg(\sum_{\bfj\in\afmbnn}
k_{A,B,\bfj}\mu^\bfj\bigg)u_{A,1}^+1_{\mu,1}u_{B,1}^-=0,$$
where $\mu^\bfj=\mu_1^{j_1}\cdots\mu_n^{j_n}$.
This implies that $\sum_{\bfj\in\afmbnn}
k_{A,B,\bfj}\mu^\bfj=0$ for all $A,B,\bfj$. By \cite[6.3.3]{DDF}, we have
$\det(\mu^\bfj)_{\mu,\bfj\in\mbz_l^n}\not=0$ for any $l\geq 1$, where
$\mbz_l^n=\{\la\in\mbz^n\mid 0\leq\la_i\leq l-1,\,\forall i\}$. It follows that
$k_{A,B,\bfj}=0$ for all $A,B,\bfj$ and hence $x=0$. The proof is completed.
\end{proof}

We will identify $\afuglq$ with the subalgebra $\vi(\afuglq)$ of $\hddHambq$ via $\vi$, and hence identify $w_{A}^{\pm}$ with $\sum_{\la\in\afmbzn}u_{A,1}^\pm1_{\la,1}$ for $A\in\afThnp$, etc.

Let$\ddHambz=\ddHa\ot_\sZ\mbz$ and let $\hddHambz$ be the $\mbz$-submodule of $\hddHambq$ consisting of those elements $\sum_{A,B\in\afThnp,\,\la\in\afmbzn}
\beta_{A,B,\la}u_{A,1}^+1_{\la,1} u_{B,1}^-$ in $\hddHambq$  with $\bt_{A,B,\la}\in\mbz$ for all $A,B,\la$. Then by \ref{interal form},
$\hddHambz$ is a $\mbz$-subalgebra of $\hddHambq$.
We can now prove that $\afuglz$ is a $\mbz$-subalgebra of $\afuglq$.

\begin{Thm}\label{v=1}
We have $\afuglz=\hddHambz\cap\afuglq$. In particular, $\afuglz$ is a $\mbz$-subalgebra of $\afuglq$.
\end{Thm}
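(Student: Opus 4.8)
The plan is to prove the two inclusions $\afuglz\subseteq\hddHambz\cap\afuglq$ and $\hddHambz\cap\afuglq\subseteq\afuglz$ separately, using the triangular decompositions on both sides and the explicit $\mbz$-basis of $\ddHa$ from \eqref{ddHa}.

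For the inclusion $\afuglz\subseteq\hddHambz\cap\afuglq$, I would argue as follows. By definition $\afuglz\subseteq\afuglq$, so it remains to see $\afuglz\subseteq\hddHambz$. Each generating element $w_A^+\prod_{1\le i\le n}\bpa{\afE_{i,i}}{\la_i}w_B^-$ of $\afuglz$ must be shown to lie in $\hddHambz$. Using \ref{vi}, we have $w_A^\pm=\sum_{\mu\in\afmbzn}u_{A,1}^\pm1_{\mu,1}$, and it is natural to compute $\vi\bigl(\bpa{\afE_{i,i}}{\la_i}\bigr)$. Since $\vi(\afE_{i,i})=\sum_{\mu}\mu_i1_{\mu,1}$ acts diagonally on the idempotents $1_{\mu,1}$, we get $\vi\bigl(\prod_i\bpa{\afE_{i,i}}{\la_i}\bigr)=\sum_{\mu\in\afmbzn}\prod_i\bpa{\mu_i}{\la_i}1_{\mu,1}$, and the binomial coefficients $\bpa{\mu_i}{\la_i}$ are integers. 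Multiplying out, $\vi\bigl(w_A^+\prod_i\bpa{\afE_{i,i}}{\la_i}w_B^-\bigr)$ is a formal sum $\sum_{\mu}c_\mu\,u_{A,1}^+1_{\mu,1}u_{B,1}^-$ with $c_\mu\in\mbz$, so it lies in $\hddHambz$ (the finiteness condition (F) holds automatically, as it does for any element in the image of $\vi$). Hence $\afuglz\subseteq\hddHambz\cap\afuglq$.

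For the reverse inclusion, let $x\in\hddHambz\cap\afuglq$. Writing $x$ as an element of $\afuglq$ and expanding in the $\mbq$-basis $\bigl\{w_A^+\prod_i\bpa{\afE_{i,i}}{\la_i}w_B^-\bigr\}$ from \ref{integral basis for afuglz}, say $x=\sum_{A,B,\la}k_{A,B,\la}\,w_A^+\prod_i\bpa{\afE_{i,i}}{\la_i}w_B^-$ with $k_{A,B,\la}\in\mbq$, I apply $\vi$ and compare coefficients. By the computation above, $\vi(x)=\sum_{A,B\in\afThnp,\,\mu\in\afmbzn}\bigl(\sum_{\la\in\afmbnn}k_{A,B,\la}\prod_i\bpa{\mu_i}{\la_i}\bigr)u_{A,1}^+1_{\mu,1}u_{B,1}^-$. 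Because $\{u_{A,1}^+1_{\mu,1}u_{B,1}^-\}$ is a $\mbq$-basis of $\ddHambq$ (hence the coordinates in $\hddHambq$ are well-defined), the hypothesis $x\in\hddHambz$ forces $\sum_{\la\in\afmbnn}k_{A,B,\la}\prod_i\bpa{\mu_i}{\la_i}\in\mbz$ for every fixed $A,B$ and every $\mu\in\afmbzn$. Now I fix $A,B$ and treat $\sum_\la k_{A,B,\la}\prod_i\bpa{x_i}{\la_i}$ as a numerical polynomial in $x=(x_1,\dots,x_n)\in\mbz^n$ taking integer values on all of $\mbz^n$; by the theory of integer-valued polynomials (the $\prod_i\bpa{x_i}{\la_i}$ form a $\mbz$-basis of the ring of integer-valued polynomials on $\mbz^n$, via finite differences), it follows that $k_{A,B,\la}\in\mbz$ for all $\la$. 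Therefore $x\in\afuglz$, completing the proof. The last statement ($\afuglz$ is a $\mbz$-subalgebra) is then immediate: $\hddHambz$ is a $\mbz$-subalgebra of $\hddHambq$ by \ref{interal form}, $\afuglq$ is a $\mbq$-subalgebra of $\hddHambq$ via $\vi$, so their intersection is closed under multiplication.

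The \textbf{main obstacle} I anticipate is the integer-valued polynomial step: one needs the precise fact that a $\mbq$-linear combination of the products $\prod_i\bpa{x_i}{\la_i}$ which is integer-valued at every point of $\mbz^n$ must already have integer coefficients. This is a standard finite-difference argument (evaluate successively at $x=0$, then apply difference operators $\Delta_i$), but it must be invoked carefully — in particular one uses that the $\la_i$ range over all of $\mbn$ so the relevant matrix of values $\bigl(\prod_i\bpa{\mu_i}{\la_i}\bigr)_{\mu,\la}$ is unitriangular over $\mbz$ when restricted to any finite box, which is exactly the content of the determinant non-vanishing cited from \cite[6.3.3]{DDF} in the proof of \ref{vi}. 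Once this linear-algebra fact is in hand, everything else is bookkeeping with the triangular decompositions.
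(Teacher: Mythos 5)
Your proof is correct and follows essentially the same approach as the paper: both inclusions are handled by expanding in the $\mbz$-basis $\{u_{A,1}^+1_{\mu,1}u_{B,1}^-\}$ via $\vi$, and the reverse inclusion is settled by inverting the unitriangular binomial-coefficient matrix, which is exactly what the paper carries out as induction on $\sg(\mu)$. One minor correction to your commentary: the determinant non-vanishing from \cite[6.3.3]{DDF} invoked in the proof of Proposition~\ref{vi} is the Vandermonde-type statement $\det(\mu^{\bfj})\neq 0$ for monomial powers and is used there only to prove that $\vi$ is injective; the unitriangularity over $\mbz$ of the matrix $\bigl(\prod_i\binom{\mu_i}{\la_i}\bigr)_{\mu,\la}$ is a different, more elementary fact, and your finite-difference step rests on the latter, not the former.
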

\begin{proof}
Clearly $\afuglz\han\hddHambz\cap\afuglq$. On the other hand, if
$x\in\hddHambz\cap\afuglq$, then by \ref{integral basis for afuglz}, we may write $$x=\sum_{A,B\in\afThnp\atop\la\in\afmbnn}k_{A,B,\la}
w_{A}^+\prod_{1\leq i\leq n}\bigg({\afE_{i,i}\atop\la_i}\bigg)w_B^-=
\sum_{A,B\in\afThnp\atop \mu\in\afmbzn}
\bigg(\sum_{\la\in\afmbnn}k_{A,B,\la}\bigg({\mu\atop\la}\bigg)\bigg)
u_{A,1}^+1_{\mu,1} u_{B,1}^-,$$
where $k_{A,B,\la}\in\mbq$ and $\big({\mu\atop\la}\big)=
\prod_{1\leq i\leq n}\big({\mu_i\atop\la_i}\big)$.
Since $x\in\hddHambz$ we have $$k_{A,B,\mu}+\sum_{\la\in\afmbnn,\,\sg(\la)<\sg(\mu)\atop
\la_i\leq\mu_i,\,\forall i}
k_{A,B,\la}\bigg({\mu\atop\la}\bigg)
=\sum_{\la\in\afmbnn}k_{A,B,\la}\bigg({\mu\atop\la}\bigg)\in\mbz$$ for $A,B\in\afThnp$ and $\mu\in\afmbnn$, where, as before, $\sg(\mu)=\sum_{1\leq i\leq n}\mu_i$.
Using induction on $\sg(\mu)$, we conclude that $k_{A,B,\mu}\in\mbz$ for all $A,B\in\afThnp$ and $\mu\in\afmbnn$, and hence $x\in\afuglz$. This proves the first assertion. Since
$\hddHambz$ is a $\mbz$-subalgebra of $\hddHambq$ by \ref{interal form}, we conclude that $\afuglz=\hddHambz\cap\afuglq$ is a $\mbz$-subalgebra of $\afuglq$.
\end{proof}

Finally we will establish affine Schur--Weyl reciprocity at the integral level.
Let $\afSrmbq=\afSr\ot_\sZ\mbq$ and $\afSrmbz=\afSr\ot_\sZ\mbz$.
For $A\in\afThnpm$ and $\la\in\afLanr$, let $A(\bfl,r)_1=A(\bfl,r)\ot 1\in\afSrmbq$ and $[\diag(\la)]_1=[\diag(\la)]\ot 1\in\afSrmbq$.
The following result is due to \cite[6.1.3 and 6.1.4]{DDF} (cf. \cite{Yang1}).
\begin{Lem}\label{etar}
There is a surjective algebra homomorphism
$\eta_r:\afuglq\ra \afSrmbq$ such that $\eta_r(\afE_{i,j})=\afE_{i,j}(\bfl,r)_1$ for $i\not=j$ and $\eta_r(\afE_{i,i})=\sum_{\la\in\afLanr}\la_i[\diag(\la)]_1$.
Furthermore we have $\eta_r(w_A^+)=A(\bfl,r)_1$ and $\eta_r(w_A^-)=\tA(\bfl,r)_1$ for $A\in\afThnp$.
\end{Lem}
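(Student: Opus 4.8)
The plan is to realise $\eta_r$ as the composite $\afuglq\xrightarrow{\vi}\hddHambq\xrightarrow{\widehat{\dzr}}\afSrmbq$, where $\vi$ is the embedding of \ref{vi} and $\widehat{\dzr}$ is the extension to the completion of the $\up=1$ specialisation $\dzr\ot_\sZ\mbq:\ddHambq\to\afSrmbq$ of the surjection in \ref{integral quantum affine Schur Weyl duality}. First one must make sense of $\widehat{\dzr}$ on the image of $\vi$. Using \ref{dzr}, \eqref{bimodule}, \eqref{commuting formula A(bfl,r)[diag(la)]}, \eqref{deg(uA^+)} and \eqref{[diag(la)][A]}, together with the fact that $\ti u_A^\pm=\up^{\,\dim\End(M(A))-\dim M(A)}u_A^\pm$ specialises to $u_A^\pm$ at $\up=1$ (so that $\zeta_r(\ti u_A^+)=A(\bfl,r)$, $\zeta_r(\ti u_A^-)=\tA(\bfl,r)$ give $\dzr(u_{A,1}^+1_{\la,1})=[A+\diag(\la-\co(A))]_1$ and $\dzr(u_{A,1}^-1_{\la,1})=[\tA+\diag(\la-\ro(A))]_1$ in $\afSrmbq$), one sees that an element $\sum_{A,B,\la}\beta_{A,B,\la}u_{A,1}^+1_{\la,1}u_{B,1}^-$ of $\hddHambq$ is sent to a formal $\mbq$-combination of the basis elements $[T]_1$ ($T\in\afThnr$) in which only triples with $\ro(T)=\la+\deg(u_A^+)$ contribute to a given $[T]_1$, hence only finitely many by the defining finiteness condition (F) of $\hddHambq$, and for each $(A,B)$ only finitely many $\la$ produce a nonzero term since $\La_\vtg(n,r-\sg(A))$ is finite. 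Thus $\widehat{\dzr}$ restricts to a well-defined $\mbq$-linear map $\vi(\afuglq)\to\afSrmbq$, which is multiplicative there because $\dzr$ is multiplicative on $\ddHambq$ and the sums involved are finite; so $\eta_r:=\widehat{\dzr}\circ\vi$ is a $\mbq$-algebra homomorphism.

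Next I would read off the images on generators. Summing $\dzr(u_{A,1}^+1_{\la,1})=[A+\diag(\la-\co(A))]_1$ over $\la\in\afmbzn$ gives $\widehat{\dzr}\big(\sum_\la u_{A,1}^+1_{\la,1}\big)=\sum_{\nu\in\La_\vtg(n,r-\sg(A))}[A+\diag(\nu)]_1=A(\bfl,r)_1$, and likewise $\widehat{\dzr}\big(\sum_\la u_{A,1}^-1_{\la,1}\big)=\tA(\bfl,r)_1$, while $\widehat{\dzr}\big(\sum_\la\la_i1_{\la,1}\big)=\sum_{\la\in\afLanr}\la_i[\diag(\la)]_1$ by \eqref{[diag(la)][diag(mu)]} (note $[\diag(\la)]_1=0$ unless $\la\in\afLanr$). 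Combined with the explicit formulas for $\vi$ in \ref{vi}, this yields at once $\eta_r(\afE_{i,j})=\afE_{i,j}(\bfl,r)_1$ for $i\ne j$ (the case $i>j$ coming from $\afE_{i,j}=w^-_{\afE_{j,i}}$), $\eta_r(\afE_{i,i})=\sum_{\la\in\afLanr}\la_i[\diag(\la)]_1$, and, since $\vi(w_A^\pm)=\sum_\la u_{A,1}^\pm1_{\la,1}$ by \ref{vi}, the ``Furthermore'' assertions $\eta_r(w_A^+)=A(\bfl,r)_1$ and $\eta_r(w_A^-)=\tA(\bfl,r)_1$.

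For surjectivity, since $\vi(\afE_{i,i})=\sum_\mu\mu_i1_{\mu,1}$ and the $1_{\mu,1}$ are orthogonal idempotents, one gets $\vi\big(\prod_{1\le i\le n}\bpa{\afE_{i,i}}{\la_i}\big)=\sum_{\mu\in\afmbzn}\prod_i\bpa{\mu_i}{\la_i}1_{\mu,1}$ and therefore $\eta_r\big(\prod_i\bpa{\afE_{i,i}}{\la_i}\big)=\sum_{\mu\in\afLanr}\prod_i\bpa{\mu_i}{\la_i}[\diag(\mu)]_1$. For $\mu,\la\in\afLanr$ the product $\prod_i\bpa{\mu_i}{\la_i}$ is nonzero only when $\mu_i\ge\la_i$ for every $i$, which forces $\mu=\la$ because $\sg(\mu)=\sg(\la)=r$; hence $\eta_r\big(\prod_i\bpa{\afE_{i,i}}{\la_i}\big)=[\diag(\la)]_1$ for each $\la\in\afLanr$. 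Consequently $\eta_r(\afuglq)$ contains every $A(\bfl,r)_1$ and $\tB(\bfl,r)_1$ ($A,B\in\afThnp$) and every $[\diag(\la)]_1$ ($\la\in\afLanr$), hence all products $A(\bfl,r)_1[\diag(\la)]_1\tB(\bfl,r)_1$; by \ref{integral basis for affine Schur algebras}, base-changed to $\mbq$ via $\up\mapsto1$, these span $\afSrmbq$, so $\eta_r$ is surjective.

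The step I expect to be the real work is the first one: setting up $\widehat{\dzr}$ and checking carefully that it is well defined and multiplicative on $\vi(\afuglq)$ --- concretely, that each $\widehat{\dzr}(\vi(\afE_{i,j}))$ is an honest finitely supported element of $\afSrmbq$ and that $\widehat{\dzr}\circ\vi$ respects products --- since this is exactly where condition (F) and the finiteness of the sets $\La_\vtg(n,r-\sg(A))$ and $\afLanr$ are used. An alternative, presentation-based route would define $\eta_r$ directly on the generators $\afE_{i,j}$ by the stated formulas and verify the defining relations of $\afuglq$ recorded in the proof of \ref{vi} among the elements $\afE_{i,j}(\bfl,r)_1$ and $\sum_{\la\in\afLanr}\la_i[\diag(\la)]_1$ of $\afSrmbq$; this bypasses the completion at the cost of an explicit multiplication computation in the affine quantum Schur algebra at $\up=1$, and is essentially the argument of \cite[6.1.3, 6.1.4]{DDF}.
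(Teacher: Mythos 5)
The paper does not actually prove this lemma: it cites \cite[6.1.3 and 6.1.4]{DDF} and gives no argument of its own. Your proof is therefore not a ``restatement'' of the paper's proof but a genuine construction of $\eta_r$, carried out inside the framework this paper develops. It is correct: factoring $\eta_r=\widehat{\dzr}\circ\vi$ through the completion $\hddHambq$ is consistent, and every step you invoke (\ref{dzr}, \ref{vi}, \ref{integral quantum affine Schur Weyl duality}, \ref{integral basis for affine Schur algebras}, the orthogonality $1_{\la,1}1_{\mu,1}=\dt_{\la,\mu}1_{\la,1}$, and the identity $[\diag(\la)]_1=0$ for $\la\notin\afLanr$) is available earlier in the paper. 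The delicate point you flag yourself --- well-definedness and multiplicativity of $\widehat{\dzr}$ on $\vi(\afuglq)$ --- does go through: for $x\in\afuglq$, $\vi(x)$ has only finitely many pairs $(A,B)$ occurring; for each of those pairs, $\dzr(u_{A,1}^+1_{\la,1}u_{B,1}^-)\ne0$ forces $\la+\deg(u_A^+)\in\afLanr$, which cuts the $\la$-sum to a finite set; and since $\dzr$ is multiplicative on $\ddHambq$ and all the relevant sums are finite, $\widehat{\dzr}(\vi(x)\vi(y))=\widehat{\dzr}(\vi(x))\widehat{\dzr}(\vi(y))$ by factoring the double sum. The generator computations and the surjectivity argument via $\eta_r\big(\prod_i\bpa{\afE_{i,i}}{\la_i}\big)=[\diag(\la)]_1$ and \ref{integral basis for affine Schur algebras} are also correct. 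Worth noting the comparison: the route in \cite{DDF} that this paper cites is presentation-based (define $\eta_r$ on the $\afE_{i,j}$ and check relations in $\afSrmbq$), which avoids the completion but requires explicit multiplication in the affine Schur algebra; your route instead leans on $\vi$ and $\dzr$, buying a proof that is shorter and more self-contained within this paper, at the cost of the mild bookkeeping you identify about finiteness in $\hddHambq$. One small caveat to tighten if you write this up formally: state explicitly that $\widehat{\dzr}$ is only being defined (and shown multiplicative) on the subalgebra $\vi(\afuglq)$, not on all of $\hddHambq$, since on a general element of $\hddHambq$ the image would only be a formal (possibly infinite) combination of the $[T]_1$.
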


\begin{Thm}\label{integral affine Schur Weyl duality}
The restriction of $\eta_r$ to $\afuglz$ gives a surjective $\mbz$-algebra homomorphism $\eta_r:\afuglz\twoheadrightarrow\afSrmbz$.
\end{Thm}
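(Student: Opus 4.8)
The plan is to establish the two inclusions $\eta_r(\afuglz)\subseteq\afSrmbz$ and $\afSrmbz\subseteq\eta_r(\afuglz)$. Once both are in hand, the restriction of $\eta_r$ is a surjective $\mbz$-linear map $\afuglz\to\afSrmbz$; since $\eta_r$ is an algebra homomorphism by \ref{etar} and $\afuglz$ is a $\mbz$-subalgebra of $\afuglq$ by \ref{v=1}, this restriction is the asserted surjective $\mbz$-algebra homomorphism.

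For the first inclusion I would run over the $\mbz$-basis $\{w_A^+\prod_{1\leq i\leq n}\bpa{\afE_{i,i}}{\la_i}w_B^-\mid A,B\in\afThnp,\,\la\in\afmbnn\}$ of $\afuglz$ provided by \ref{integral basis for afuglz}. Using that the $[\diag(\mu)]_1$ with $\mu\in\afLanr$ are pairwise orthogonal idempotents summing to the identity of $\afSrmbz$, the formula $\eta_r(\afE_{i,i})=\sum_{\mu\in\afLanr}\mu_i[\diag(\mu)]_1$ of \ref{etar} yields
\[
\eta_r\Bigl(\prod_{1\leq i\leq n}\bpa{\afE_{i,i}}{\la_i}\Bigr)=\sum_{\mu\in\afLanr}\bpa{\mu}{\la}\,[\diag(\mu)]_1,\qquad\bpa{\mu}{\la}:=\prod_{1\leq i\leq n}\bpa{\mu_i}{\la_i}\in\mbz,
\]
and combining this with $\eta_r(w_A^+)=A(\bfl,r)_1$ and $\eta_r(w_B^-)=\tB(\bfl,r)_1$ gives
\[
\eta_r\Bigl(w_A^+\prod_{1\leq i\leq n}\bpa{\afE_{i,i}}{\la_i}w_B^-\Bigr)=\sum_{\mu\in\afLanr}\bpa{\mu}{\la}\,A(\bfl,r)_1\,[\diag(\mu)]_1\,\tB(\bfl,r)_1 .
\]
Since $\afLanr$ is finite and $A(\bfl,r)$, $[\diag(\mu)]$, $\tB(\bfl,r)$ all lie in the $\sZ$-subalgebra $\afSr$ of $\afbfSr$, each summand lies in $\afSrmbz$, so $\eta_r(\afuglz)\subseteq\afSrmbz$.

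For the reverse inclusion I would use the $\sZ$-basis of $\afSr$ from \ref{integral basis for affine Schur algebras}, which after specialising $\up\mapsto 1$ becomes the $\mbz$-basis $\{C^+(\bfl,r)_1[\diag(\la)]_1C^-(\bfl,r)_1\mid C\in\afThnpm,\ \la\in\afLanr,\ \la_i\geq\sg_i(C)\}$ of $\afSrmbz$; it suffices to realise each of these as $\eta_r$ of an element of $\afuglz$. The key sub-step is that each $[\diag(\la)]_1$ lies in $\eta_r(\afuglzz)$: order $\afLanr$ by a total order refining the componentwise order, so that the matrix $\bigl(\bpa{\mu}{\la}\bigr)_{\mu,\la\in\afLanr}$ is lower unitriangular (because $\bpa{\mu}{\mu}=1$ and $\bpa{\mu}{\la}=0$ unless $\la\leq\mu$ componentwise) and hence invertible over $\mbz$; inverting, one obtains $d_{\la,\nu}\in\mbz$ with $[\diag(\la)]_1=\eta_r\bigl(\sum_{\nu\in\afLanr}d_{\la,\nu}\prod_i\bpa{\afE_{i,i}}{\nu_i}\bigr)$, and the argument of $\eta_r$ lies in $\afuglzz$. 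Next, for $C\in\afThnpm$ write $C=C^++C^-$ with $C^+\in\afThnp$, $C^-\in\afThnm$, and put $B=\tr(C^-)\in\afThnp$; then $\eta_r(w_{C^+}^+)=C^+(\bfl,r)_1$ and $\eta_r(w_B^-)=C^-(\bfl,r)_1$ by \ref{etar}, so the element $w_{C^+}^+\bigl(\sum_{\nu\in\afLanr}d_{\la,\nu}\prod_i\bpa{\afE_{i,i}}{\nu_i}\bigr)w_B^-\in\afuglzp\afuglzz\afuglzm=\afuglz$ is sent by $\eta_r$ to $C^+(\bfl,r)_1[\diag(\la)]_1C^-(\bfl,r)_1$. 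These exhaust the chosen $\mbz$-basis of $\afSrmbz$, so $\afSrmbz\subseteq\eta_r(\afuglz)$.

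I do not anticipate a serious obstacle: the substantive content is already packaged in \ref{etar}, \ref{integral basis for afuglz} and \ref{integral basis for affine Schur algebras}, and the one genuinely new observation is that applying $\eta_r$ collapses the a priori infinite sums over $\afmbzn$ living in the completed algebras into finite sums over the finite set $\afLanr$, which makes integrality hold on both sides without effort. The mildly delicate points are the unitriangularity of the binomial matrix over $\mbz$ and the bookkeeping $C\leftrightarrow(C^+,\tr(C^-))$ between $\afThnpm$ and $\afThnp$; there is no $\up$-power to track here, since we work at $\up=1$.
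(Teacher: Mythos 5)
Your argument is correct and follows essentially the same route as the paper: both inclusions $\eta_r(\afuglz)\subseteq\afSrmbz$ and $\afSrmbz\subseteq\eta_r(\afuglz)$ are obtained by applying $\eta_r$ to the $\mbz$-basis of \ref{integral basis for afuglz} and comparing with the basis of \ref{integral basis for affine Schur algebras}, using \ref{etar} and \ref{v=1} exactly as the paper does. The one place you overcomplicate is the ``key sub-step'' realising $[\diag(\la)]_1$: there is no need to invert a unitriangular matrix, because for $\la,\mu\in\afLanr$ one has $\sg(\la)=\sg(\mu)=r$, so the componentwise condition $\la\leq\mu$ forces $\la=\mu$ and the matrix $\bigl(\bpa{\mu}{\la}\bigr)_{\mu,\la\in\afLanr}$ is literally the identity; the paper uses this directly to read off $\eta_r\bigl(\prod_i\bpa{\afE_{i,i}}{\la_i}\bigr)=[\diag(\la)]_1$ for $\la\in\afLanr$.
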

\begin{proof}
Since $\eta_r(\afE_{i,i})=\sum_{\mu\in\afLanr}\mu_i[\diag(\mu)]_1$ for $1\leq i\leq n$,
by \eqref{[diag(la)][diag(mu)]}, we conclude that $$\eta_r\bigg(\prod_{1\leq i\leq n}\bigg({\afE_{i,i}\atop\la_i}\bigg)\bigg)=\sum_{\mu\in\afLanr\atop
\mu_i\geq\la_i,\,1\leq i\leq n}\bigg({\mu\atop\la}\bigg)[\diag(\mu)]_1$$
for $\la\in\afmbnn$. It follows that
$\eta_r\left(\prod_{1\leq i\leq n}\left({\afE_{i,i}\atop\la_i}\right)\right)=[\diag(\la)]_1$
for $\la\in\afLanr$.
This, together with \ref{etar} implies that $$\eta_r(\afuglz)=\spann_\mbz\{A^+(\bfl,r)_1[\diag(\la)]_1
A^-(\bfl,r)_1\mid A\in\afThnpm,\,\la\in\afLanr\}.$$
Combining this with  \ref{integral basis for affine Schur algebras} and \ref{v=1} proves the assertion.
\end{proof}

\end{document}